\documentclass[11pt,letterpaper]{amsart}
\usepackage[margin=1in]{geometry}

\usepackage{amsmath,amssymb,amsthm,mathrsfs,enumitem,xcolor,url,pifont,graphicx,tikz,array}
\usepackage[colorlinks, linktocpage, breaklinks]{hyperref}

\renewcommand{\leq}{\leqslant}
\renewcommand{\geq}{\geqslant}

\newtheoremstyle{mythm}
{.5\baselineskip}	% Space above
{.5\baselineskip}	% Space below
{}		% Body font
{}		% Indent amount
{\bf}	% Theorem head font
{. }		% Punctuation after theorem head
{ }		% Space after theorem head
{}		% Theorem head spec

\theoremstyle{mythm}
\newtheorem{theorem}{Theorem}[section]	%[section] <-- adds the section numbering e.g. Thm 0.4
\newtheorem{lemma}[theorem]{Lemma}
\newtheorem{proposition}[theorem]{Proposition}
\newtheorem{corollary}[theorem]{Corollary}
\newtheorem{definition}[theorem]{Definition}
\newtheorem{example}[theorem]{Example}

\newtheorem{remark}[theorem]{Remark}
\newtheorem{question}{Question}

\title{An Adaptation of the Vietoris Topology for Ordered Compact Sets}

\author{Christopher Caruvana}
\address{School of Sciences, Indiana University Kokomo, 2300 S. Washington Street, Kokomo, IN 46902 USA}
\email{chcaru@iu.edu}
\urladdr{https://chcaru.pages.iu.edu/}

\author{Jared Holshouser}
\email{JHolshouser1321@gmail.com}
\urladdr{https://jaredholshouser.github.io/}

\date{3 Nov. 2025}

\subjclass{54B10; 54B20; 54D20; 54A25.}

\keywords{Ordered compact sets; Vietoris powers; Pinched cube topology.}

\begin{document}

\begin{abstract}
    We discuss a natural topology on powers of a space that is inspired by the Vietoris topology on compact subsets.
    We then place this topology in context with other product topologies; specifically, we compare this topology with the Tychonoff product,
    the box product, and Bell's uniform box topology.
    We identify a variety of topological properties for the specific case when the ground space is discrete.
    When the ground space is the Euclidean real line, we show that the resulting power is not Lindel\"{o}f,
    and hence, not Menger.
    This shows that, unlike the the Vietoris topology on unordered compact subsets, covering properties of the ground space need not transfer to the
    Vietoris power.
\end{abstract}

\maketitle

\section{Introduction}

It is well-known that compact sets display many of the same topological and closure properties as finite sets. An important aspect of the study of
finite sets that appears to be lacking from the study of compact sets is combinatorics. Mathematicians naturally consider repetition and order for finite sets,
but it is less commonly looked at for compact sets. When \(X\) is a topological space, \(X^{<\omega} := \bigcup_{n\in\omega} X^{n+1}\) represents the ordered
finite sets with repetition while \([X]^{<\aleph_0}\) is the unordered finite subsets of \(X\) without repetition. These are naturally topologized by the
Tychonoff product topology and the Vietoris hyperspace topology, respectively. Moreover,
it was shown in \cite{CHVietoris} that there are connections between \(X^{<\omega}\) and \([X]^{<\aleph_0}\) regarding selection games and covering properties.

In this paper, we discuss the space of ordered compact subsets and
topologize it analogously to \(X^{<\omega}\), that is, as a kind of product topology.
We call this product the Vietoris power due to its relation to the Vietoris topology on compact subsets.
Vietoris powers tend to be more complicated than their Tychonoff product relatives.
We explore many properties of these new product spaces and show that the covering property connections that hold between Tychnoff powers for finite tuples and
Vietoris hyperspaces of finite subsets do not hold with the Vietoris power of compact sets (or even of finite sets).
After submitting this paper, the authors learned from Jocelyn Bell that, in \cite{PinchedCube}, this same topology is introduced as the
\emph{pinched cube} topology, albeit with a different motivation and objective.

Even though the covering property relationships fail, Vietoris powers nevertheless introduce intriguing new topologies on powers that are
generally distinct from the box topology, the uniform box topology, and the Tychonoff product.
Vietoris powers also present one way to create new spaces from old while maintaining the density but
potentially growing the weight.

\subsection{Preliminaries}

We will use the standard definition of \(\omega\) where \(n \in \omega\)
is \(\{ m \in \omega : m \in n \}\).
Hence, given \(A \subseteq \omega\) and \(n \in \omega\),
we may write \(A \subseteq n\).
We let \([X]^{<\aleph_0}\) denote the set of all finite subsets of a set \(X\) and
\(X^{<\omega} = \bigcup_{n\in\omega} X^{n+1}\).

For a set \(X\) and \(A \subseteq X\), \(\mathbf 1_A : X \to 2\) will be used to denote the indicator (or characteristic) function
for \(A\).
For a set \(X\), we will use \(\wp(X)\) to refer to its power set and \(\# X\) to denote its cardinality.
For a function \(f : X \to Y\), we use \(\mathrm{img}(f)\) to denote the image \(\{ f(x) : x \in X \}\) of \(f\) and,
for \(A \subseteq X\), \(f|_A\) to denote the restriction of \(f\) to \(A\).
For a set \(X\) and \(f \in X^{<\omega}\), we let \(\mathrm{len}(f)\) denote the cardinality of the domain of \(f\).

We will use \(\mathcal O_X\) to denote the collection of all open covers of \(X\),
viewing \(\mathcal O\) as a \emph{topological operator}.
A topological operator is a class function defined on the class of all topological spaces.
Another topological operator that will appear here is \(\mathscr T\), the topological operator that produces all nonempty open subsets of a space \(X\).

For a set \(X\), we will let \(D(X)\) represent the set \(X\) endowed with the discrete topology.
All ordinals are assumed to have the order topology, unless otherwise stated.
For a space \(X\), we will use \(w(X)\), \(d(X)\), \(s(X)\), and \(e(X)\) to denote the \emph{weight}, \emph{density},
\emph{spread}, and \emph{extent}, respectively, of \(X\).
See \cite{Hodel} and \cite{JuhaszCardinal} for more on these, and other, cardinal functions.

A space \(X\) is \emph{crowded} (also known as \emph{dense-in-itself}) if it has no isolated points.
A space \(X\) is \emph{scattered} if every nonempty subspace \(Y\) of \(X\) contains a point isolated in \(Y\).

Throughout, for a space \(X\), we will use \(K(X)\) to denote the set of all nonempty compact subsets of \(X\) and
\(\mathbb K(X)\) to denote \(K(X)\) endowed with the Vietoris topology.
We will use the standard basis for \(\mathbb K(X)\) consisting of sets of the form
\begin{align*}
    [U_j : j\in\{1,\ldots,n\}] &=
    [U_1, \ldots, U_n]\\
    &= \left\{ K \in \mathbb K(X) : K \subseteq \bigcup_{j=1}^n U_j \wedge \bigwedge_{j=1}^n (K \cap U_j \neq \emptyset) \right\}.
\end{align*}
For a comprehensive introduction to the Vietoris topology, see \cite{MichaelSubsets}.

Any topological terms appearing undefined herein are to be understood as in \cite{Engelking}.
Any set-theoretic terms or notation undefined here are to be understood as in \cite{Kunen2011}.

\subsection{Covers and Selection Games}

We will consider two other kinds of open covers.
\begin{definition}
    For a space \(X\), an open cover \(\mathscr U\) of \(X\) is said to be
    \begin{itemize}
        \item
        an \emph{\(\omega\)-cover} of \(X\) if every finite subset of \(X\) is contained in a member of \(\mathscr U\).
        \item 
        a \emph{\(k\)-cover} of \(X\) if every compact subset of \(X\) is contained in a member of \(\mathscr U\).
    \end{itemize}
    We will let \(\Omega\) (resp. \(\mathcal K\)) be the topological operator which produces \(\Omega_X\) (resp. \(\mathcal K_X\)), the set of all
    \(\omega\)-covers (resp. \(k\)-covers) of a space \(X\).
\end{definition}

The notion of \(\omega\)-covers is commonly attributed to \cite{GerlitsNagy}, but they were already in use in \cite{McCoyOmegaCovers}
where they are referred to as \emph{open covers for finite sets}.
The notion of \(k\)-covers appears as early as \cite{McCoyKcovers} in which they are referred to as \emph{open covers for compact subsets}.

We recall the usual selection principles.
For more details on selection principles and relevant references, see \cite{ScheepersI,KocinacSelectedResults,ScheepersSelectionPrinciples,ScheepersNoteMat}.
\begin{definition}
    Let \(\mathcal A\) and \(\mathcal B\) be sets.
    Then the single- and finite-selection principles are defined, respectively, to be the properties
    \[\mathsf S_1(\mathcal A, \mathcal B) \equiv 
    \left(\forall A \in \mathcal A^\omega\right)\left(\exists B \in \prod_{n \in \omega} A_n\right)\ \{B_n : n \in \omega\} \in \mathcal B\]
    and
    \[\mathsf S_{\mathrm{fin}}(\mathcal A, \mathcal B) \equiv 
    \left(\forall A \in \mathcal A^\omega\right)\left(\exists B \in \prod_{n \in \omega} [A_n]^{<\aleph_0}\right)\ \bigcup\{B_n : n \in \omega\} \in \mathcal B.\]
    Following \cite{ScheepersNoteMat}, for a space \(X\) and topological operators \(\mathcal A\) and \(\mathcal B\),
    we write \(X \models \mathsf S_\ast(\mathcal A, \mathcal B)\), where \(\ast \in \{ 1 , \mathrm{fin} \}\),
    to mean that \(X\) satisfies the selection principle \(\mathsf S_\ast(\mathcal A_X, \mathcal B_X)\).
\end{definition}
Using this notation, recall that a space \(X\) is \emph{Menger} (resp. \emph{Rothberger}) if \(X \models \mathsf S_{\mathrm{fin}}(\mathcal O, \mathcal O)\)
(resp. \(X \models \mathsf S_1(\mathcal O, \mathcal O)\)).
We will also say that a space \(X\) is \emph{\(\omega\)-Menger} (resp. \emph{\(\omega\)-Rothberger})
if \(X \models \mathsf S_{\mathrm{fin}}(\Omega, \Omega)\) (resp. \(X \models \mathsf S_1(\Omega,\Omega)\));
\emph{\(k\)-Menger} (resp. \emph{\(k\)-Rothberger}) if \(X \models \mathsf S_{\mathrm{fin}}(\mathcal K, \mathcal K)\)
(resp. \(X \models \mathsf S_1(\mathcal K, \mathcal K)\)).
See \cite{TraditionalMenger} for a comprehensive comparison of these topological properties, along with relevant references.

Selection principles lead naturally to selection games, which, in our context, are types of topological games.
Topological games have a long history, much of which can be gathered from Telg{\'a}rsky's survey \cite{TelgarskySurvey}.
In this paper, the relevant selection games are games for two players, P1 and P2, running for \(\omega\) innings, defined explicitly below.
\begin{definition} \label{def:FiniteSelectionGame}
    Given sets \(\mathcal A\) and \(\mathcal B\), we define the \emph{finite-selection game}
    \(\mathsf{G}_{\mathrm{fin}}(\mathcal A, \mathcal B)\) for \(\mathcal A\) and \(\mathcal B\) as follows.
    In round \(n \in \omega\), P1 plays \(A_n \in \mathcal A\) and P2 responds with \(\mathscr F_n \in [A_n]^{<\aleph_0}\).
    We declare P2 the winner if \(\bigcup\{ \mathscr F_n : n \in \omega \} \in \mathcal B\).
    Otherwise, P1 wins.
\end{definition}
\begin{definition} \label{def:SingleSelectionGame}
    Given sets \(\mathcal A\) and \(\mathcal B\), we analogously define the \emph{single-selection game}
    \(\mathsf{G}_{1}(\mathcal A, \mathcal B)\) for \(\mathcal A\) and \(\mathcal B\) as follows.
    In round \(n \in \omega\), P1 plays \(A_n \in \mathcal A\) and P2 responds with \(x_n \in A_n\).
    We declare P2 the winner if \(\{x_n : n \in \omega \} \in \mathcal B\).
    Otherwise, P1 wins.
\end{definition}
The natural extensions of Definitions \ref{def:FiniteSelectionGame} and \ref{def:SingleSelectionGame} to contexts where
\(\mathcal A\) and \(\mathcal B\) are topological operators can be defined in the obvious way.

Notions of strategies naturally arise when considering games.
For definitions and corresponding notation for the strategy types relevant to the selection games considered here,
we refer the reader to \cite[Def. 3.7]{TraditionalMenger}.

Since we will refer explicitly to a particular partial ordering on selection games here, we define the partial ordering
explicitly here for the convenience of the reader.
\begin{definition}
    For two selection games \(\mathcal G\) and \(\mathcal H\), we write \(\mathcal G \leq_{\mathrm{II}} \mathcal H\)
    if each of the following hold:
    \begin{itemize}
        \item 
        \(\mathrm{II} \underset{\mathrm{mark}}{\uparrow} \mathcal G \implies \mathrm{II} \underset{\mathrm{mark}}{\uparrow} \mathcal H\),
        \item 
        \(\mathrm{II} \uparrow \mathcal G \implies \mathrm{II} \uparrow \mathcal H\),
        \item 
        \(\mathrm{I} \not\uparrow \mathcal G \implies \mathrm{I} \not\uparrow \mathcal H\), and
        \item 
        \(\mathrm{I} \underset{\mathrm{pre}}{\not\uparrow} \mathcal G \implies \mathrm{II} \underset{\mathrm{pre}}{\not\uparrow} \mathcal H\).
    \end{itemize}
    If, in addition,
    \begin{itemize}
        \item 
        \(\mathrm{I} \underset{\mathrm{cnst}}{\not\uparrow} \mathcal G \implies \mathrm{II} \underset{\mathrm{cnst}}{\not\uparrow} \mathcal H,\)
    \end{itemize}
    we write that \(\mathcal G \leq^+_{\mathrm{II}} \mathcal H\).
    In the case that \(\mathcal G \leq^+_{\mathrm{II}} \mathcal H\) and
    \(\mathcal H \leq^+_{\mathrm{II}} \mathcal G\), we write
    \(\mathcal G \leftrightarrows \mathcal H\).
\end{definition}

Recall that, for topological operators \(\mathcal A\) and \(\mathcal B\), and \(\ast \in \{1, \mathrm{fin}\}\),
\[\mathrm{I} \underset{\mathrm{pre}}{\not\uparrow} \mathsf{G}_\ast(\mathcal{A},\mathcal{B}) \equiv \mathsf{S}_\ast(\mathcal A, \mathcal B)\]
(see \cite[Prop. 15]{ClontzDualSelection} and \cite[Lemma 2.12]{CHVietoris}).
Hence, in particular, for a space \(X\), note that, if
\(\mathsf G_\ast(\mathcal A_X, \mathcal B_X) \leq_{\mathrm{II}} \mathsf G_\dagger(\mathcal C_X, \mathcal D_X)\)
where \(\ast, \dagger \in \{1, \mathrm{fin}\}\), then
\(X \models \mathsf S_\ast(\mathcal A, \mathcal B) \implies X \models \mathsf S_\dagger(\mathcal C, \mathcal D)\).

We now state one explicit relationship between a topologization of ordered finite sets with multiplicity and a topologization of the unordered
finite subsets of space in terms of selective covering properties.
\begin{theorem}[{\cite[Theorems 3.8 and 4.8]{CHVietoris}}] \label{thm:FiniteMotivation}
    For \(\ast \in \{1, \mathrm{fin} \}\) and any space \(X\),
    \[\mathsf G_\ast(\Omega_X,\Omega_X)
    \leftrightarrows \mathsf G_\ast(\mathcal O_{X^{<\omega}},\mathcal O_{X^{<\omega}})
    \leftrightarrows \mathsf G_\ast(\mathcal O_{\mathcal P_{\mathrm{fin}}(X)},\mathcal O_{\mathcal P_{\mathrm{fin}}(X)}).\]
\end{theorem}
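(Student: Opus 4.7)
The plan is to prove each $\leftrightarrows$ by exhibiting the six one-sided game reductions required by $\leq^+_{\mathrm{II}}$ in both directions, and to organize them so that the translations lift all the strategy types simultaneously rather than arguing five times. The unifying observation is that the basic open sets in the three target objects are all indexed by the same combinatorial data, namely a finite tuple of open subsets of $X$. A member of $\Omega_X$ is a collection of opens, each swallowing a prescribed finite set; a rectangular basic open $U_1\times\cdots\times U_n$ in $X^{<\omega}$ is determined by such a tuple; and a Vietoris basic open $[U_1,\ldots,U_n]$ in $\mathcal P_{\mathrm{fin}}(X)$ is indexed by one again. The reductions will route everything through this shared data.

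For the forward direction $\mathsf{G}_\ast(\Omega_X,\Omega_X)\leq^+_{\mathrm{II}}\mathsf{G}_\ast(\mathcal O_{X^{<\omega}},\mathcal O_{X^{<\omega}})$ (and its $\mathcal P_{\mathrm{fin}}(X)$ analogue), I would translate each P1-move $\mathscr U\in\Omega_X$ into the open cover $\{U^{n+1}:U\in\mathscr U,\,n\in\omega\}$ of $X^{<\omega}$ (respectively, $\{[U]:U\in\mathscr U\}$ of $\mathcal P_{\mathrm{fin}}(X)$), which is a legal cover precisely because $\omega$-covers swallow the finite range of every tuple (resp. every finite subset). A P2 response in the translated game selects some $U^{n+1}$'s (or $[U]$'s), and I feed the underlying $U$'s back to P2 in the $\omega$-cover game; the union being an $\omega$-cover is immediate. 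For the reverse direction, given a P1-move $\mathscr V\in\mathcal O_{X^{<\omega}}$, I first pass to the basic refinement by assigning to each tuple $t\in X^{<\omega}$ a rectangular basic open $U_1(t)\times\cdots\times U_n(t)\subseteq V(t)\in\mathscr V$; collapsing each rectangle to the single open $U_1(t)\cup\cdots\cup U_n(t)$ yields an $\omega$-cover of $X$, which I hand to P1 in the $\Omega_X$-game, and I invert the collapse when pulling P2's answers back. The analogous collapse works for Vietoris basic opens $[U_1,\ldots,U_n]$.

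The main obstacle is verifying that the refinement-and-collapse step respects each of the five strategy classes, and in particular the constant-strategy refinement $\mathrm{I}\underset{\mathrm{cnst}}{\not\uparrow}$ needed for $\leq^+_{\mathrm{II}}$. Concretely, one must argue that a single $\mathscr V\in\mathcal O_{X^{<\omega}}$ witnessing a constant P1-strategy can be uniformly refined to a single $\omega$-cover of $X$ (independent of the round), and that P2's selections in either game can be decoded into winning selections in the other using only round-by-round information. I would handle this by phrasing the reductions as relabelings in the sense of Clontz's dual-selection framework, so that the strategy translation becomes a functorial consequence of the move translation rather than a case analysis. The Vietoris side additionally requires care that basic opens $[U_1,\ldots,U_n]$ meeting every $U_j$ produce an $\omega$-cover when collapsed — handled by the standard observation that each finite $F\subseteq X$ lies in $[U_1,\ldots,U_n]$ iff $F\subseteq\bigcup_j U_j$ and $F$ meets each $U_j$, so finite subsets of each piece of an $\omega$-cover can be matched to Vietoris basic opens and vice versa.
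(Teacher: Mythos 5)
The first thing to note is that the paper does not prove Theorem \ref{thm:FiniteMotivation} at all: it is quoted from \cite[Theorems 3.8 and 4.8]{CHVietoris}, so there is no in-paper argument to compare against, and your proposal has to be judged on its own terms and against the cited source.

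There is a genuine gap, and it sits exactly at the step you yourself flag as the main obstacle. Your translation from a cover \(\mathscr V\) of \(X^{<\omega}\) (or of \(\mathcal P_{\mathrm{fin}}(X)\)) to an \(\omega\)-cover of \(X\) --- refine at each tuple \(t\) to a rectangle \(U_1(t)\times\cdots\times U_n(t)\subseteq V(t)\) and collapse to \(W(t)=\bigcup_j U_j(t)\) --- does produce an \(\omega\)-cover, but it fails condition \ref{translationPropII} of Theorem \ref{TranslationTheorem}. If P2's selected \(W(t)\)'s form an \(\omega\)-cover of \(X\), then a given tuple \(s\) has \(\mathrm{img}(s)\subseteq\bigcup_j U_j(t)\) for some selected \(t\), but this does not place \(s\) in \(U_1(t)\times\cdots\times U_n(t)\): the length of \(s\) may differ from \(n\), and even with matching lengths the coordinates of \(s\) can land in the wrong factors. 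Hence \(s\) need not lie in \(V(t)\), and the pulled-back selections need not cover \(X^{<\omega}\). The identical failure occurs on the Vietoris side, since \(G\subseteq\bigcup_j U_j\) does not give \(G\in[U_1,\ldots,U_n]\) (\(G\) may miss some \(U_j\)). In fact no single-round, one-element-per-element translation of this shape can exist: take the cover of \(D(\omega)^{<\omega}\) by singletons and observe that every sub-\(\omega\)-cover of any candidate \(\omega\)-cover of \(\omega\) would have to pull back onto \emph{all} tuples. This is precisely why the genuine proofs of these equivalences (in \cite{CHVietoris}, and classically in the Scheepers--Sakai analysis of \(\omega\)-covers versus finite powers) partition the \(\omega\) innings into infinitely many infinite blocks and use the innings in the \(k\)-th block to deal with \(X^{k}\) (respectively, the \(k\)-element subsets); that interleaving argument lives outside the round-by-round format of Theorem \ref{TranslationTheorem}, so ``phrasing the reductions as relabelings'' cannot absorb it. A smaller but real issue: you have attached the two translations to the wrong inequalities. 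In the format of Theorem \ref{TranslationTheorem}, \(\overleftarrow{\mathrm{T}}_{\mathrm{I}}\) carries P1's moves of the \emph{target} game to moves of the \emph{source} game, so the easy map \(\mathscr U\mapsto\{U^{n+1}:U\in\mathscr U,\ n\in\omega\}\) establishes \(\mathsf G_\ast(\mathcal O_{X^{<\omega}},\mathcal O_{X^{<\omega}})\leq^+_{\mathrm{II}}\mathsf G_\ast(\Omega_X,\Omega_X)\), not the direction you assign it to; the direction \(\mathsf G_\ast(\Omega_X,\Omega_X)\leq^+_{\mathrm{II}}\mathsf G_\ast(\mathcal O_{X^{<\omega}},\mathcal O_{X^{<\omega}})\) is the one that depends on the broken collapse, and it is the one that remains unproved.
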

In the context of compact sets, the single-selection equivalence fails (see \cite[Example 4.19]{CHVietoris}), but we still obtain
\begin{theorem}[{\cite[Theorem 4.15]{CHVietoris}}] \label{thm:FiniteCompactEquivalence}
    For any space \(X\),
    \[\mathsf G_{\mathrm{fin}}(\mathcal K_X, \mathcal K_X) \leftrightarrows \mathsf G_{\mathrm{fin}}(\mathcal O_{\mathbb K(X)}, \mathcal O_{\mathbb K(X)}).\]
\end{theorem}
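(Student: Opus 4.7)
My plan is to prove $\leftrightarrows$ by establishing both $\leq^+_{\mathrm{II}}$ reductions, with translations mediated by the map $U \leftrightarrow [U]$ (where $[U]$ denotes the nonempty compact subsets of $U$) and the basic-open parameterization $[U_1, \ldots, U_m]$ of the Vietoris topology. Both translations will be round-by-round, so once I verify that the finite-selection winning condition transfers in each direction, the full slate of $\leq^+_{\mathrm{II}}$-conditions — covering Markov and positional strategies for P2 and the various non-winning conditions for P1, including the predetermined and constant variants — follows uniformly from the round-to-round correspondence.

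For $\mathsf G_{\mathrm{fin}}(\mathcal O_{\mathbb K(X)}, \mathcal O_{\mathbb K(X)}) \leq^+_{\mathrm{II}} \mathsf G_{\mathrm{fin}}(\mathcal K_X, \mathcal K_X)$ the translation is the straightforward one. Given a $k$-cover $\mathscr U$ of $X$ played by P1, translate to $\{[U] : U \in \mathscr U\}$, which is a Vietoris cover because every compact $K \subseteq X$ sits in some $U \in \mathscr U$, hence in $[U]$. Feed this to a winning Vietoris-game strategy to obtain a finite response $\{[U_1], \ldots, [U_k]\}$, and return $\{U_1, \ldots, U_k\} \subseteq \mathscr U$. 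If the accumulated Vietoris selections cover $\mathbb K(X)$, then any compact $L$ lies in some $[U]$ in the union, so $L \subseteq U$, and the projected selections form a $k$-cover.

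For $\mathsf G_{\mathrm{fin}}(\mathcal K_X, \mathcal K_X) \leq^+_{\mathrm{II}} \mathsf G_{\mathrm{fin}}(\mathcal O_{\mathbb K(X)}, \mathcal O_{\mathbb K(X)})$, given an open cover $\mathscr V$ of $\mathbb K(X)$ played by P1, I refine to the basic-open cover $\mathcal B(\mathscr V) = \{[U_1, \ldots, U_m] : [U_1, \ldots, U_m] \subseteq V \text{ for some } V \in \mathscr V\}$ and project to the $k$-cover $\mathscr U = \{U_1 \cup \cdots \cup U_m : [U_1, \ldots, U_m] \in \mathcal B(\mathscr V)\}$ of $X$; this is a $k$-cover because each compact $K$ lies in some $V \in \mathscr V$ and hence in some basic open $[U_1, \ldots, U_m] \subseteq V$, placing $K \subseteq \bigcup U_i$. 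Applying the winning $k$-cover strategy to $\mathscr U$ produces a finite $\mathscr G \subseteq \mathscr U$, and for each $W = \bigcup U_i^W \in \mathscr G$ I would fix a witness tuple $(U_1^W, \ldots, U_{m(W)}^W, V^W)$; the tentative response is $\mathscr F = \{V^W : W \in \mathscr G\}$.

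The main obstacle is verifying that $\bigcup_n \mathscr F_n$ covers $\mathbb K(X)$. For a compact $L \subseteq X$, the $k$-cover property yields $W = \bigcup U_i^W \in \bigcup_n \mathscr G_n$ with $L \subseteq W$, but membership $L \in V^W$ would require $L \in [U_1^W, \ldots, U_{m(W)}^W]$, i.e., $L \cap U_i^W \neq \emptyset$ for every coordinate — a condition not implied by $L \subseteq W$. To overcome this I would enlarge the response: for each selected $W \in \mathscr G_n$ and each nonempty $I \subseteq \{1, \ldots, m(W)\}$, include a Vietoris cover element covering $[U_i^W : i \in I]$, after a preliminary refinement of $\mathscr V_n$ ensuring that a witness for each restriction pattern exists. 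Then a compact $L \subseteq W$ matches the pattern $I_0 = \{i : L \cap U_i^W \neq \emptyset\}$ and is captured by the associated witness. Each selected $W$ contributes only $2^{m(W)}-1$ additional cover elements, so $\mathscr F_n$ remains finite; the finite-selection budget is essential here, and its unavailability is precisely what makes the analogous single-selection equivalence fail in \cite[Example 4.19]{CHVietoris}.
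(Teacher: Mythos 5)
This statement is imported by the paper from \cite[Theorem 4.15]{CHVietoris} and is not proved in the present manuscript, so there is no in-paper argument to compare against; I can only assess your proposal on its own terms. Your first reduction, $\mathsf G_{\mathrm{fin}}(\mathcal O_{\mathbb K(X)}, \mathcal O_{\mathbb K(X)}) \leq^+_{\mathrm{II}} \mathsf G_{\mathrm{fin}}(\mathcal K_X, \mathcal K_X)$ via $U \mapsto [U]$, is correct and routine.

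The second reduction has a genuine gap at the step you describe as ``a preliminary refinement of $\mathscr V_n$ ensuring that a witness for each restriction pattern exists.'' That sentence is carrying the entire weight of the theorem. You need, for each chosen witness tuple $(U_1^W,\ldots,U_{m(W)}^W)$ and each nonempty $I$, a \emph{single} element of $\mathscr V_n$ containing the basic open $[U_i^W : i \in I]$, and it is not at all clear this can be arranged. Already for $|I|=1$ the set $[U_{i_0}]$ consists of \emph{all} nonempty compact subsets of $U_{i_0}$; since the tuple must cover the compact set $K$ witnessing membership in the $k$-cover, some $U_{i_0}$ may be forced to contain an infinite compact piece $L$ of $K$, and then $[U_{i_0}] \supseteq [L]$, where $[L]$ is a compact subspace of $\mathbb K(X)$ that in general requires several elements of $\mathscr V_n$ to cover. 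So the existence of the refined witnesses is a substantive lemma about the Vietoris topology (essentially a tube-lemma/compactness argument using the fact that $[L]$ is compact for compact $L$), not a bookkeeping adjustment, and without it the forward translation is undefined. A cleaner and standard route, consistent with how \cite{CHVietoris} organizes things (see the chain quoted in Corollary \ref{cor:CompactInequality}), is to factor through $k$-covers of $\mathbb K(X)$: first $\mathsf G_{\mathrm{fin}}(\mathcal K_X,\mathcal K_X) \leftrightarrows \mathsf G_{\mathrm{fin}}(\mathcal K_{\mathbb K(X)},\mathcal K_{\mathbb K(X)})$, using Michael's results that $\bigcup\mathcal C$ is compact in $X$ for compact $\mathcal C \subseteq \mathbb K(X)$ and that $[L]$ is compact for compact $L \subseteq X$; then the general fact $\mathsf G_{\mathrm{fin}}(\mathcal K_Y,\mathcal K_Y) \leq^+_{\mathrm{II}} \mathsf G_{\mathrm{fin}}(\mathcal O_Y,\mathcal O_Y)$ obtained by closing an open cover of $Y$ under finite unions, where a single selected finite union unpacks into finitely many cover elements. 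That unpacking is exactly where the finite-selection budget is spent and why the $\mathsf G_1$ analogue fails, which is a sharper localization of the phenomenon than the $2^{m(W)}-1$ count in your sketch.
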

The motivation for the present study was the search for an analog to \(X^{<\omega}\) appearing in Theorem \ref{thm:FiniteMotivation}
in the compact setting of Theorem \ref{thm:FiniteCompactEquivalence}.
Though we have not been successful in identifying such an analog, we have identified a natural topologization of powers
which can have interesting topological properties, which we will elaborate on below.

\subsection{The Vietoris Power}

\begin{definition} \label{def:VietorisPower}
    Let \(X\) be a space and \(\kappa\) be a cardinal.
    For \(U \subseteq X\), \(\Lambda \in [\kappa]^{<\aleph_0}\), and \(V : \Lambda \to \wp(X)\), define
    \[[U ; \Lambda, V ] = \left\{ f \in X^\kappa : \left[\forall \alpha \in \Lambda\ (f(\alpha) \in V_\alpha)\right] \wedge \mathrm{img}(f) \subseteq U \right\}.\]    
    We will see in Proposition \ref{prop:BasisProposition} that these sets form a basis for a topology
    when the \(U\) and \(V\) range over open subsets of \(X\).
    The notation for \(X^\kappa\) with this topology is \(\mathsf{V}(X^\kappa)\),
    and we refer to this space as the \emph{Vietoris power of order \(\kappa\)}.
\end{definition}
Note that
\[[U; \emptyset, \emptyset] = U^\kappa\]
and \[[X; \Lambda, V] = \bigcap_{\alpha \in \Lambda} \pi_{\alpha}^{-1}(V_\alpha).\]
We refer to the former as \emph{tubes}.
\begin{proposition} \label{prop:BasisProposition}
    For a space \(X\) and a cardinal \(\kappa\), the sets of the form \([U; \Lambda, V]\),
    where \(U \in \mathscr T_X\), \(\Lambda \in [\kappa]^{<\aleph_0}\), and \(V : \Lambda \to \mathscr T_X\),
    constitute a basis for a topology on \(X^\kappa\).
\end{proposition}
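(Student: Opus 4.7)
The plan is to verify the two standard basis axioms for the family $\mathcal{B}$ of all sets of the form $[U;\Lambda,V]$ with $U\in\mathscr T_X$, $\Lambda\in[\kappa]^{<\aleph_0}$, and $V:\Lambda\to\mathscr T_X$. First I would handle the covering axiom: every point of $X^\kappa$ lies in some basis element. This is immediate from one of the two identities recorded just before the proposition, namely $[X;\emptyset,\emptyset]=X^\kappa$, obtained by taking $U=X$, $\Lambda=\emptyset$, and $V$ the empty function.

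The main step is the intersection axiom. Given two basis elements $[U_1;\Lambda_1,V^1]$ and $[U_2;\Lambda_2,V^2]$ and a point $f$ in their intersection, I would produce a third basis element sandwiched between $\{f\}$ and the intersection. The natural candidate is $U:=U_1\cap U_2$, $\Lambda:=\Lambda_1\cup\Lambda_2$, and $V:\Lambda\to\mathscr T_X$ defined piecewise by $V_\alpha:=V^1_\alpha$ for $\alpha\in\Lambda_1\setminus\Lambda_2$, $V_\alpha:=V^2_\alpha$ for $\alpha\in\Lambda_2\setminus\Lambda_1$, and $V_\alpha:=V^1_\alpha\cap V^2_\alpha$ for $\alpha\in\Lambda_1\cap\Lambda_2$. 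Membership $f\in[U;\Lambda,V]$ is then a direct unpacking of definitions: $\mathrm{img}(f)\subseteq U_1\cap U_2=U$, and for each $\alpha\in\Lambda$ the value $f(\alpha)$ satisfies the relevant $V^i$-constraint, hence the constructed $V_\alpha$-constraint. Containment $[U;\Lambda,V]\subseteq[U_i;\Lambda_i,V^i]$ for $i\in\{1,2\}$ follows because $U\subseteq U_i$, $\Lambda_i\subseteq\Lambda$, and $V_\alpha\subseteq V^i_\alpha$ whenever $\alpha\in\Lambda_i$.

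The only bookkeeping worth flagging, and arguably the only mild obstacle, is ensuring that each constructed $V_\alpha$ and $U$ actually lies in $\mathscr T_X$, which in this paper means a \emph{nonempty} open subset. Openness is automatic since finite intersections of open sets are open. Nonemptiness is equally immediate from the existence of $f$ in the intersection: for $\alpha\in\Lambda_1\cap\Lambda_2$ the set $V^1_\alpha\cap V^2_\alpha$ contains $f(\alpha)$, and $U_1\cap U_2$ contains $\mathrm{img}(f)$, which is nonempty. Thus no case analysis beyond this remark is required, and the verification goes through cleanly.
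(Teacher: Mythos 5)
Your proof is correct and takes essentially the same route as the paper's: the same choice of $U = U_1 \cap U_2$, $\Lambda = \Lambda_1 \cup \Lambda_2$, and the same piecewise-defined $V$ on $\Lambda_1 \setminus \Lambda_2$, $\Lambda_2 \setminus \Lambda_1$, and $\Lambda_1 \cap \Lambda_2$. You are in fact slightly more careful than the paper, which omits both the covering axiom and the check that the constructed sets are nonempty (needed because $\mathscr T_X$ consists of \emph{nonempty} open sets); your observation that $f$ itself witnesses nonemptiness settles that point.
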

\begin{proof}
    Suppose \(f \in [U_1; \Lambda_1, V_1] \cap [ U_2; \Lambda_2, V_2]\).
    Let \(U = U_1 \cap U_2\) and \(\Lambda = \Lambda_1 \cup \Lambda_2\).
    Note that \(\Lambda \in [\kappa]^{<\aleph_0}\).
    Then define \(V : \Lambda \to \mathscr T_X\) by
    \[V_{\alpha} = \begin{cases}
        V_1(\alpha), & \alpha \in \Lambda_1 \setminus \Lambda_2,\\
        V_2(\alpha), & \alpha \in \Lambda_2 \setminus \Lambda_1,\\
        V_1(\alpha) \cap V_2(\alpha), & \alpha \in \Lambda_1 \cap \Lambda_2.
    \end{cases}
    \]
    We claim that
    \[f \in [U; \Lambda, V] \subseteq [U_1; \Lambda_1, V_1] \cap [ U_2; \Lambda_2, V_2].\]
    
    Clearly, \(\mathrm{img}(f) \subseteq U_1 \cap U_2 = U\).
    Also, the fact that \(f(\alpha) \in V_{\alpha}\) for each \(\alpha \in \Lambda\) is evident.
    
    So we finish by showing that 
    \[[U; \Lambda, V] \subseteq [U_1; \Lambda_1, V_1] \cap [ U_2; \Lambda_2, V_2].\]
    Suppose \(g \in [U; \Lambda, V]\).
    Immediately, it is seen that \(\mathrm{img}(g) \subseteq U_j\) for \(j = 1,2\).
    It is also clear that, for any \(\alpha \in \Lambda_j\), \(j = 1,2\), \(g(\alpha) \in V_j(\alpha)\).
    Hence,
    \[g \in [U_1; \Lambda_1, V_1] \cap [ U_2; \Lambda_2, V_2].\]
    This completes the proof.
\end{proof}
\begin{remark}
    If \(\kappa\) is a finite cardinal, then the box topology, Vietoris power, and Tychonoff product on \(X^\kappa\) all coincide.
\end{remark}
\begin{remark}
    Note that the natural mapping \(x \mapsto \vec{x}\), \(X \to \mathsf V(X^\kappa)\), where
    \(\vec{x}\) represents the constant function \(\kappa \to X\) taking value \(x\),
    is a continuous injection.
    If \(X\) is Hausdorff, it's evident that the image of \(X\) under this map is closed.
\end{remark}
\begin{proposition} \label{prop:BasicProductOfClosed}
    Let \(X\) be a space and \(\kappa\) be a cardinal.
    Suppose \(E \subseteq X\) is closed, \(\Lambda \in [\kappa]^{<\aleph_0}\), and \(F : \Lambda \to \wp(X)\)
    is such that \(F_\alpha\) is closed for each \(\alpha \in \Lambda\).
    Then \([E; \Lambda, F]\) is closed in \(\mathsf V(X^\kappa)\).
\end{proposition}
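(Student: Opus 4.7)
The plan is to prove $[E; \Lambda, F]$ is closed by showing its complement in $\mathsf{V}(X^\kappa)$ is open. Concretely, I will fix an arbitrary $f \in X^\kappa \setminus [E; \Lambda, F]$ and exhibit a basic open neighborhood of $f$, of the form furnished by Proposition \ref{prop:BasisProposition}, that is disjoint from $[E; \Lambda, F]$. Since $f \notin [E; \Lambda, F]$, at least one of the two defining conditions must fail, giving two cases; in each case I will use a ``tube-like'' basis element of the form $[X; \Lambda', V]$ where only one coordinate is constrained.

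In the first case, there is some $\alpha_0 \in \Lambda$ with $f(\alpha_0) \notin F_{\alpha_0}$. Because $F_{\alpha_0}$ is closed, $X \setminus F_{\alpha_0}$ is a nonempty open set containing $f(\alpha_0)$. Setting $\Lambda' = \{\alpha_0\}$ and $V(\alpha_0) = X \setminus F_{\alpha_0}$, the basic open set $[X; \Lambda', V]$ contains $f$, and any $g$ in it satisfies $g(\alpha_0) \notin F_{\alpha_0}$, hence $g \notin [E; \Lambda, F]$. In the second case, $\mathrm{img}(f) \not\subseteq E$, so some $\beta \in \kappa$ has $f(\beta) \notin E$. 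Because $E$ is closed, $X \setminus E$ is a nonempty open set; taking $\Lambda' = \{\beta\}$ and $V(\beta) = X \setminus E$, the basic open $[X; \Lambda', V]$ contains $f$ and any $g$ in it has $g(\beta) \notin E$, so again $g \notin [E; \Lambda, F]$.

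There is no substantive obstacle here: the basis for $\mathsf{V}(X^\kappa)$ already builds in exactly the constraints needed to witness failure of either defining clause of $[E; \Lambda, F]$. The only mild subtlety worth flagging is that, because $f$ may take values wildly outside $E$ on coordinates outside $\Lambda$, one should not try to separate $f$ by constraining the tube's first coordinate $U$; taking $U = X$ and letting the single pointwise constraint $V$ do all the work is cleaner and suffices.
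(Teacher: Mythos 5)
Your proof is correct and follows essentially the same route as the paper's: given $f \notin [E;\Lambda,F]$, separate it with a basic open set of the form $[X;\{\gamma\},V]$ constraining a single coordinate by the complement of the relevant closed set, in either of the two failure cases. The only difference is the order in which the two cases are treated.
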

\begin{proof}
    Suppose \(f : \kappa \to X\) is such that \(f \not\in [E; \Lambda, F]\).
    In the case that \(\mathrm{img}(f) \not\subseteq E\), let \(\beta < \kappa\) be such that
    \(f(\beta) \not\in E\).
    Note then that
    \[f \in [X; \{\beta\}, \{\langle \beta, X \setminus E \rangle \}] \subseteq \mathsf V(X^\kappa) \setminus [E; \Lambda, F].\]
    Otherwise, we can find \(\alpha \in \Lambda\) with \(f(\alpha) \not\in F_\alpha\).
    Then,
    \[f \in [X; \{\alpha\}, \{ \langle \alpha, X \setminus F_\alpha \rangle \}] \subseteq \mathsf V(X^\kappa) \setminus [E;\Lambda, F].\]
    Hence, \(\mathsf V(X^\kappa) \setminus [E;\Lambda, F]\) is open, finishing the proof.
\end{proof}
\begin{remark} \label{rmk:DiscreteSpaceBasis}
    For a discrete space \(X\), we can use
    \[[s,A] = \{ f \in X^\omega : f|_{\mathrm{len}(s)} = s \wedge \mathrm{img}(f) \subseteq A \},\]
    where \(s \in X^{<\omega}\) and \(A \subseteq X\),
    as a basis for \(\mathsf V(X^\omega)\).
    Note that each \([s,A]\) is clopen by Proposition \ref{prop:BasicProductOfClosed}.
    When convenient, we will use \([\![ s ]\!] = [s, \mathrm{img}(s)]\)
    for \(s \in X^{<\omega}\).
\end{remark}

In Section \ref{section:ExamplesAndResults}, we will compare the Vietoris power topology to the Tychonoff product,
the box product, and the uniform box topology, which was introduced in \cite{BellUniformBox}.

To capture the ordered compact sets, we can look at particular subspaces of \(\mathsf V(X^\kappa)\) created by limiting the range of the functions.
\begin{definition}
    For a space \(X\) and a cardinal \(\kappa\), let \[\mathbb K(X,\mathrm{ord},\kappa) = \{ f \in X^\kappa : \mathrm{img}(f) \in K(X) \}\]
    and endow it with the topology it inherits as a subspace of \(\mathsf V(X^\kappa)\).
    Then let \[\mathbb K(X,\mathrm{ord}) = \mathbb K(X,\mathrm{ord}, \sup\{\#K : K \in K(X)\} + \omega).\] 
\end{definition}
For a point of comparison, we will also provide notation for the subspace of \(\mathsf V(X^\kappa)\) consisting of functions with finite range.
\begin{definition}
    For a space \(X\) and a cardinal \(\kappa\), let
    \[\mathbb F(X,\mathrm{ord},\kappa) = \left\{ f \in X^\kappa : \mathrm{img}(f) \in [X]^{<\aleph_0} \right\}\]
    endowed with the topology it inherits as a subspace of \(\mathsf V(X^\kappa)\).
    Then let \(\mathbb F(X,\mathrm{ord}) = \mathbb F(X, \mathrm{ord}, \omega)\).
\end{definition}
\begin{remark} \label{rmk:FiniteDensity}
    It is evident that \(\mathbb F(X,\mathrm{ord},\kappa) \subseteq \mathbb K(X,\mathrm{ord}, \kappa)\) and that
    \(\mathbb F(X, \mathrm{ord}, \kappa)\) is dense in \(\mathsf V(X^\kappa)\).
\end{remark}

Note that \(\bigcup_{n\in\omega} \mathsf V(X^n)\) is equal to \(X^{<\omega}\), topologized as a disjoint union of \(X^n\) (the Tychonoff product).
Thus, if we want to study a new space, we are forced to look at \(\mathbb F(X, \mathrm{ord}, \omega)\).

Also, when \(X\) is anticompact, \(\mathbb F(X,\mathrm{ord}) = \mathbb K(X,\mathrm{ord})\) (recall that a space is said to be \emph{anticompact},
following \cite{Bankston}, if every compact subset is finite).

Before we continue elaborating on the Vietoris power, we offer some additional motivation for the choice in
Definition \ref{def:VietorisPower} over using a disjoint union, despite the connection in Theorem \ref{thm:FiniteMotivation} with \(X^{<\omega}\).

Note that \(X := \aleph_{\omega_1 + \omega}\) is hemicompact, and thus \(k\)-Rothberger.
Indeed, \(\{ \aleph_{\omega_1 + n} + 1 : n \in \omega \}\) is a countable family of compact subsets of
\(X\) and every compact subset of \(X\) must be contained in some \(\aleph_{\omega_1+n}\), for \(n \in \omega\).
Now let \(K_\kappa(X) = \{ f \in X^\kappa : \mathrm{img}(f) \in K(X) \}\) and note that \(X\)
has a compact subset of each cardinality \(\kappa < \aleph_{\omega_1 + \omega}\).
It follows that the disjoint union
\[\bigsqcup\{ K_\kappa(X) : \kappa \in \mathrm{CARD} \cap \aleph_{\omega_1+\omega} \}\]
is not Lindel\"{o}f as it is an uncountable disjoint union.

Even if we were to define \(K_\kappa^\ast(X) = \{ f \in K_\kappa(X) : \# \mathrm{img}(f) = \kappa \}\), the disjoint union
\[\bigsqcup\{ K^\ast_\kappa(X) : \kappa \in \mathrm{CARD} \cap \aleph_{\omega_1+\omega} \}\]
is also not Lindel\"{o}f as it is an uncountable disjoint union.
Note additionally that the cardinality stipulation in \(K_\kappa^\ast(X)\) is not mirrored in the \(X^{<\omega}\) context.

As a final comment on this topic, we observe that the disjoint union context introduces potentially undesirable
redundancy.
For example, note that, for every cardinal \(\kappa < \mathfrak c\), every element of \(K_\kappa(\mathbb R)\)
is reflected also in \(K_{\mathfrak c}(\mathbb R)\).

\section{Examples and Results}
\label{section:ExamplesAndResults}

\subsection{Properties of Vietoris Powers}

We start by showing that the natural analog to the Tychonoff Theorem on products for Vietoris powers does not hold.
\begin{example} \label{example:Cantor}
    \(\mathsf{V}(X^\kappa)\) need not be compact even when \(X\) is compact.
\end{example}
\begin{proof}
    We show that \(\mathsf V(2^\omega)\) is not compact.
    Let \(U = \{0\}^\omega\) and, for each \(n \in \omega\), \(V_n = \{ b \in 2^\omega : b_n = 1 \}\).
    Note that \(U\) is open and each \(V_n\) is open.
    Moreover, \(\mathscr U := \{U\} \cup \{V_n : n\in \omega\}\) forms a cover of \(\mathsf{V}(2^\omega)\).
    However, \(\mathscr U\) has no finite subcover.
    Indeed, consider any \(F \in [\omega]^{<\aleph_0}\) and the collection
    \(\{U\} \cup \{V_n : n \in F \}\).
    Let \(m = 1 + \max F\) and define \(b : \omega \to 2\) by
    \[b_n = \begin{cases} 0, & n \neq m,\\ 1, & n = m.\end{cases}\]
    Observe that \(b \not\in U \cup \bigcup_{n \in F} V_n\).
\end{proof}
\begin{remark}
    Note that this also shows that, even if we restrict to \(\mathbb K(X, \mathrm{ord}, \kappa)\) for compact \(X\),
    we can't guarantee compactness since \(\mathsf V(2^\omega) = \mathbb K(2, \mathrm{ord}, \omega) = \mathbb K(2, \mathrm{ord})\).
    Moreover, this also shows that, in general, given some compact subset \(K_0\) of a set \(X\),
    even though \(\{ K \in \mathbb K(X) : K \subseteq K_0 \}\) is compact (see \cite{MichaelSubsets}), the ordered analog,
    \(\{ f \in \mathbb K(X, \mathrm{ord}) : \mathrm{img}(f) \subseteq K_0 \}\) need not be compact.
\end{remark}
\begin{proposition}
    Consider a space \(X\) and a cardinal \(\kappa\).
    Let \(\mathscr F \subseteq \mathbb K(X, \mathrm{ord},\kappa)\) be so that
    \(\bigcup \{ \mathrm{img}(f) : f \in \mathscr F\}\) is not compact. Then \(\mathscr F\) is not compact.
\end{proposition}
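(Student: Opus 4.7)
The plan is to transfer a witness of non-compactness from $Y := \bigcup\{\mathrm{img}(f) : f \in \mathscr F\} \subseteq X$ up to $\mathscr F$ by using the ``tube'' basic opens $[U;\emptyset,\emptyset] = U^\kappa$ introduced right after Definition \ref{def:VietorisPower}. The key observation is that a function $f \in X^\kappa$ lies in $U^\kappa$ precisely when $\mathrm{img}(f) \subseteq U$, so tubes turn a cover of $Y$ into a family of opens relevant to $\mathscr F$.

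First I would invoke the hypothesis that $Y$ is not compact (as a subspace of $X$) to select a family $\{V_\alpha : \alpha \in I\}$ of open subsets of $X$ with $Y \subseteq \bigcup_{\alpha \in I} V_\alpha$ but such that no finite subfamily covers $Y$. Then for each $F \in [I]^{<\aleph_0}$ I would set
\[
W_F \;=\; \Bigl(\,\bigcup_{\alpha \in F} V_\alpha\Bigr)^{\!\kappa} \;=\; \bigl[\,\textstyle\bigcup_{\alpha \in F} V_\alpha;\emptyset,\emptyset\,\bigr],
\]
which is open in $\mathsf V(X^\kappa)$.

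The next step is to verify that $\mathscr W := \{W_F : F \in [I]^{<\aleph_0}\}$ covers $\mathscr F$: for any $f \in \mathscr F$, $\mathrm{img}(f)$ is a compact subset of $X$ contained in $Y$, hence in $\bigcup_{\alpha \in I} V_\alpha$, so by compactness it is covered by $\bigcup_{\alpha \in F} V_\alpha$ for some finite $F$, giving $f \in W_F$. Then I would show $\mathscr W$ admits no finite subcover of $\mathscr F$: if $W_{F_1},\dots,W_{F_k}$ covered $\mathscr F$, taking $G = F_1 \cup \cdots \cup F_k$ would give $\mathscr F \subseteq W_G$, meaning every $f \in \mathscr F$ satisfies $\mathrm{img}(f) \subseteq \bigcup_{\alpha \in G} V_\alpha$, whence $Y \subseteq \bigcup_{\alpha \in G} V_\alpha$, contradicting the choice of $\{V_\alpha\}$.

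There is no real obstacle here; the argument is a straightforward bookkeeping exercise once one spots that tubes are the right opens to use. The only mild subtlety is that the family $\{W_F\}$ is directed under inclusion (because $F \subseteq F' \Rightarrow W_F \subseteq W_{F'}$), which is exactly what makes the ``combine finitely many $F_j$'s into $G$'' trick collapse a hypothetical finite subcover into a single $W_G$ and thus back into a finite subcover of $Y$.
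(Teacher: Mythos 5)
Your proof is correct, but it takes a genuinely different route from the paper's. The paper argues top-down through machinery developed later: the image map \(f \mapsto \mathrm{img}(f)\), \(\mathbb K(X,\mathrm{ord},\kappa) \to \mathbb K(X)\), is continuous (Proposition \ref{prop:ImageMappingQuotient}), so if \(\mathscr F\) were compact then \(\{\mathrm{img}(f) : f \in \mathscr F\}\) would be a compact subset of the Vietoris hyperspace \(\mathbb K(X)\), and by a classical result of Michael \cite{MichaelSubsets} the union of a compact family of compact sets is compact --- contradicting the hypothesis. Your argument is instead a direct, self-contained witness construction: you pull a bad open cover \(\{V_\alpha\}\) of \(Y = \bigcup\{\mathrm{img}(f) : f \in \mathscr F\}\) up to the tubes \(W_F = \bigl(\bigcup_{\alpha \in F} V_\alpha\bigr)^\kappa\), using the compactness of each individual \(\mathrm{img}(f)\) to see that the \(W_F\) cover \(\mathscr F\), and the directedness of \(F \mapsto W_F\) to collapse any finite subcover back into a finite subcover of \(Y\). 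Both steps check out (in particular, your use of \(\mathrm{img}(f) \in K(X)\) for \(f \in \mathbb K(X,\mathrm{ord},\kappa)\) is exactly where the hypothesis on \(\mathscr F\) enters). What the paper's approach buys is brevity and reuse of a map that is needed elsewhere anyway; what yours buys is independence from Proposition \ref{prop:ImageMappingQuotient} and from the cited hyperspace fact, making the proposition readable in isolation. Your proof is a perfectly acceptable alternative.
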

\begin{proof}
    As shown below in Proposition \ref{prop:ImageMappingQuotient}, the mapping
    \(f \mapsto \mathrm{img}(f)\), \(\mathbb K(X,\mathrm{ord},\kappa) \to \mathbb K(X)\), is continuous.
    Hence, if \(\mathscr F \subseteq \mathbb K(X, \mathrm{ord},\kappa)\) is compact,
    then \(\{ \mathrm{img}(f) : f \in \mathscr F \} \subseteq \mathbb K(X)\) is compact.
    It follows (see \cite{MichaelSubsets}) that \(\bigcup \{ \mathrm{img}(f) : f \in \mathscr F \} \subseteq X\) is compact.
\end{proof}
\begin{corollary} \label{cor:NotCompactCondition}
    Let \(\kappa\) be a cardinal.
    \begin{itemize}
        \item If \(\mathscr F \subseteq \mathbb K(D(\kappa),\mathrm{ord})\) is so that \(\bigcup_{f \in \mathscr F} \mathrm{img}(f)\) is infinite,
        then \(\mathscr F\) is not compact.
        \item If \(\mathscr F \subseteq \mathbb K(\kappa,\mathrm{ord})\) is so that \(\bigcup_{f \in \mathscr F} \mathrm{img}(f)\) is unbounded,
        then \(\mathscr F\) is not compact.
        \item If \(\mathscr F \subseteq \mathbb K(\mathbb R,\mathrm{ord})\) is so that \(\bigcup_{f \in \mathscr F} \mathrm{img}(f)\) is unbounded,
        then \(\mathscr F\) is not compact.
    \end{itemize}
\end{corollary}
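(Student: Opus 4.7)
The plan is to recognize that each bullet is a direct consequence of the immediately preceding proposition: if $\bigcup_{f \in \mathscr F} \mathrm{img}(f)$ fails to be compact in the ground space, then $\mathscr F$ fails to be compact in $\mathbb K(X,\mathrm{ord},\kappa)$. So all three bullets reduce to checking, in the respective ground space, that the stated hypothesis implies non-compactness of the union $U := \bigcup_{f \in \mathscr F} \mathrm{img}(f)$.

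I would handle the three cases in turn by invoking a standard characterization of compact subsets in each ground space. For $D(\kappa)$, compact subsets of any discrete space are precisely the finite subsets, so an infinite $U$ cannot be compact. For $\kappa$ with its order topology, I would observe that if $U$ is unbounded in $\kappa$ (so necessarily $\kappa$ is a limit ordinal, since the statement is vacuous otherwise), then the open cover $\{[0,\alpha) : \alpha < \kappa\}$ of $U$ has no finite subcover: any finite subfamily is dominated by the maximum of finitely many ordinals below $\kappa$, which would contradict the unboundedness of $U$. For $\mathbb R$, the Heine--Borel theorem gives that compact subsets are bounded, so an unbounded $U$ cannot be compact.

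There is not really a main obstacle here; the corollary is essentially a bookkeeping application of the previous proposition combined with well-known characterizations of compactness in the three ground spaces. The only subtlety worth flagging in the write-up is that the ordinal bullet is only substantive when $\kappa$ is a limit ordinal, which I would mention explicitly to avoid any ambiguity about what ``unbounded'' means in the successor case.
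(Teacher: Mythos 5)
Your proposal is correct and matches the paper's approach: the paper states this as an immediate corollary of the preceding proposition (non-compactness of $\bigcup_{f\in\mathscr F}\mathrm{img}(f)$ in the ground space forces non-compactness of $\mathscr F$) and gives no further proof, leaving exactly the three standard compactness characterizations you supply. Your remark about the successor-ordinal case being degenerate is a reasonable clarification but does not change the argument.
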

\begin{example} \label{ex:CantorNonRothberger}
    The space \(\mathsf V(2^\omega)\) is not Rothberger since it supports a nonatomic Borel probability measure.
    Thus, any space \(X\) which contains \(\mathsf V(2^\omega)\) as a closed subspace also fails to be Rothberger.
\end{example}
\begin{proposition} \label{prop:LargeDiscreteNotLindelof}
    For any uncountable discrete space \(X\),
    \(\mathbb K(X,\mathrm{ord})\) is not Lindel\"{o}f.
\end{proposition}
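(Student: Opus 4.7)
The plan is to exhibit an uncountable closed discrete subspace of $\mathbb K(X,\mathrm{ord})$, which suffices to rule out the Lindelöf property. First I would unpack the notation: since $X$ is discrete, every compact subset of $X$ is finite, so $\sup\{\#K : K\in K(X)\} + \omega = \omega$, and hence $\mathbb K(X,\mathrm{ord}) \subseteq X^\omega$ consists precisely of those $f : \omega \to X$ with finite image. The natural candidate for a large closed discrete subspace is the image of the diagonal embedding $x \mapsto \vec{x}$ highlighted earlier in the preliminaries, i.e., the set $D = \{\vec{x} : x \in X\}$ of constant functions.

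To see that $D$ is discrete as a subspace, I would note that for each $x \in X$ the singleton $\{x\}$ is open in $X$, so the tube $[\{x\}; \emptyset, \emptyset] = \{x\}^\omega$ is a basic open subset of $\mathsf V(X^\omega)$ meeting $D$ only in $\vec{x}$. For the closedness of $D$ in $\mathbb K(X,\mathrm{ord})$, I would take any non-constant $f \in \mathbb K(X,\mathrm{ord})$, fix indices $\alpha, \beta < \omega$ with $f(\alpha) \ne f(\beta)$, and consider the basic open neighborhood
\[
[X; \{\alpha,\beta\}, \{\langle \alpha, \{f(\alpha)\}\rangle, \langle \beta, \{f(\beta)\}\rangle\}].
\]
This set contains $f$ and avoids every constant function, so $D$ is closed.

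Finally I would invoke the standard fact that any Lindelöf space has at most countable extent: given an uncountable closed discrete $D \subseteq Y$, the cover $\{Y \setminus D\} \cup \{U_d : d \in D\}$ (with each $U_d$ isolating $d$ in $D$) admits no countable subcover. Since $\#D = \#X > \aleph_0$, this forces $\mathbb K(X,\mathrm{ord})$ to fail to be Lindelöf. I do not anticipate a genuine obstacle here; the only point requiring minor care is verifying closedness of $D$ directly from the Vietoris-power basis rather than from the Tychonoff subbasis, but Proposition~\ref{prop:BasicProductOfClosed} (or its proof pattern) makes this transparent.
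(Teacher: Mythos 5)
Your proof is correct, but it takes a genuinely different route from the paper's. The paper simply exhibits the open cover $\mathscr U = \{[F;\emptyset,\emptyset] : F \in [X]^{<\aleph_0}\}$: since $X$ is discrete, every member of $\mathbb K(X,\mathrm{ord})$ has finite image and so lies in some tube $F^\omega$, while any countable subfamily only covers functions whose image lies in a countable union of finite sets, missing the constant function $\vec{x}$ for any $x$ outside that union. You instead produce the uncountable closed discrete set $D = \{\vec{x} : x \in X\}$ and invoke the fact that Lindel\"{o}f spaces have countable extent; your verification that $D$ is relatively discrete (indeed, each $\vec{x}$ is isolated in the whole space, since $\{x\}^\omega = \{\vec{x}\}$) and closed (via the neighborhood $[X;\{\alpha,\beta\},V]$ of a non-constant $f$, or equivalently via the paper's earlier remark that the diagonal image of a Hausdorff $X$ is closed) is sound, as is your reduction of $\mathbb K(X,\mathrm{ord})$ to $\mathbb F(X,\mathrm{ord})\subseteq X^\omega$ using anticompactness of discrete spaces. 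The paper's argument is shorter and more self-contained; yours buys slightly more, namely the cardinal inequality $e(\mathbb K(X,\mathrm{ord})) \geq \#X$, which is of independent interest given that the paper later raises the extent of Vietoris powers as an open question.
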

\begin{proof}
    Consider
    \[\mathscr U := \left\{ [F;\emptyset,\emptyset] : F \in [X]^{<\aleph_0} \right\}.\]
    This is evidently an open cover of \(\mathbb K(X,\mathrm{ord})\) without a countable subcover.
\end{proof}
\begin{remark}
    Given the basis structure for \(\mathsf V(X^\kappa)\), it is evident that \(w(X^\kappa) \leq w(\mathsf V(X^\kappa))\).
    We will show in Theorem \ref{thm:BairePropertyList} that this inequality can be strict.
    It is also immediate that \(d(X^\kappa) \leq d(\mathsf V(X^\kappa))\) since any dense subset of \(\mathsf V(X^\kappa)\)
    is dense in \(X^\kappa\).
\end{remark}

As a modification to the standard argument that \(d\left(D(\kappa)^{\mathrm{exp}\,\kappa}\right) \leq \kappa\), where \(\kappa\) is an infinite cardinal
and \(\mathrm{exp}\,\kappa = 2^\kappa\), we obtain
\begin{lemma} \label{lem:InitialDensity}
    For any infinite cardinal \(\kappa\), \(d\left(\mathsf V\left(D(\kappa)^{\mathrm{exp}\,\kappa}\right)\right) \leq \kappa\).
    In particular, there is a dense subset of \(\mathsf V\left(D(\kappa)^{\mathrm{exp}\,\kappa}\right)\) of cardinality
    \(\kappa\) consisting of functions with finite range.
    Such a dense set is thus contained in \(\mathbb F(D(\kappa), \mathrm{ord}, 2^\kappa) = \mathbb K(D(\kappa), \mathrm{ord}, 2^\kappa)\).
\end{lemma}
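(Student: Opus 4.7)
The plan is to adapt the classical Hewitt--Marczewski--Pondiczery construction to the Vietoris-power topology, exploiting the fact that the dense set the HMP argument naturally produces already consists of functions with finite range. Identify the index set $\exp \kappa = 2^\kappa$ with the set of all maps $i : \kappa \to 2$. For each pair $(F, \sigma)$ with $F \in [\kappa]^{<\aleph_0}$ and $\sigma : 2^F \to \kappa$, define $f_{F, \sigma} \in D(\kappa)^{2^\kappa}$ by $f_{F, \sigma}(i) = \sigma(i|_F)$, and let
\[
\mathcal D = \left\{ f_{F, \sigma} : F \in [\kappa]^{<\aleph_0},\ \sigma \in \kappa^{2^F} \right\}.
\]
Since $2^F$ is finite, $\mathrm{img}(f_{F, \sigma}) \subseteq \mathrm{img}(\sigma)$ is finite, so each element of $\mathcal D$ lies in $\mathbb F(D(\kappa), \mathrm{ord}, 2^\kappa)$; and a routine computation gives $\# \mathcal D \leq \kappa \cdot \sup_{n \in \omega} \kappa^{2^n} = \kappa$.

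The main step is verifying density in the Vietoris-power topology. Given a nonempty basic open set $[U; \Lambda, V]$ with $\Lambda = \{i_1, \ldots, i_n\}$ a list of distinct indices, I replace each $V_{i_j}$ by $V_{i_j} \cap U$ so that, without loss of generality, $\emptyset \neq V_{i_j} \subseteq U$. Because $i_1, \ldots, i_n$ are pairwise distinct maps $\kappa \to 2$, there is a finite $F \subseteq \kappa$ on which they separate, that is, the restrictions $i_1|_F, \ldots, i_n|_F$ are pairwise distinct elements of $2^F$. Picking $v_j \in V_{i_j}$ for each $j$ and fixing any $u \in U$, define $\sigma : 2^F \to \kappa$ by $\sigma(i_j|_F) = v_j$ for each $j$ and $\sigma(\tau) = u$ for the remaining $\tau \in 2^F$. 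Then $\mathrm{img}(\sigma) \subseteq U$, so $\mathrm{img}(f_{F, \sigma}) \subseteq U$, while $f_{F, \sigma}(i_j) = \sigma(i_j|_F) = v_j \in V_{i_j}$, placing $f_{F, \sigma} \in [U; \Lambda, V] \cap \mathcal D$.

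The main obstacle relative to the usual HMP proof is the tube condition $\mathrm{img}(f) \subseteq U$, which has no counterpart in the Tychonoff product. Its resolution is the defining feature of the construction: because $F$ is finite, $\mathrm{img}(\sigma)$ is finite, so after the coordinates in $\Lambda$ are pinned down there are only finitely many ``spare'' inputs $\tau \in 2^F$ to handle, and assigning each of these a default value $u \in U$ forces the entire image of $f_{F, \sigma}$ into $U$. The final claim that such a dense set lies inside $\mathbb F(D(\kappa), \mathrm{ord}, 2^\kappa) = \mathbb K(D(\kappa), \mathrm{ord}, 2^\kappa)$ is then immediate, since $D(\kappa)$ is discrete and hence anticompact.
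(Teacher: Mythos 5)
Your proof is correct and follows essentially the same route as the paper's: both are the Hewitt--Marczewski--Pondiczery argument, separating the finitely many indices in $2^\kappa$ and building a finitely-valued step function whose default value is forced into the tube $U$. The only cosmetic difference is that you parametrize your step functions by the cylinder partition of $2^\kappa$ over a single finite $F \subseteq \kappa$, whereas the paper uses tuples of pairwise disjoint basic open sets of the Cantor cube together with a value on their complement; the density verification is structurally identical.
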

\begin{proof}
    For \(E \in [\kappa]^{<\aleph_0}\) and \(f : E \to 2\), let \[B(f,E) = \{ g \in 2^\kappa : g|_E = f \}.\]
    Note then that \(\mathcal B := \left\{ B(f,E) : E \in [\kappa]^{<\aleph_0}, f \in 2^E \right\}\) is a basis for \(2^\kappa\)
    of size \(\kappa\).
    For \(n \in \omega\), let \[\mathcal B_n^d = \left\{ \langle U_1, \ldots, U_n \rangle \in \mathcal B^n : (\forall 1 \leq j < k \leq n)\ U_j \cap U_k
    = \emptyset \right\}.\]
    For each \(\mathcal E = \langle U_1, \ldots, U_n , \alpha_1, \ldots, \alpha_n , \beta \rangle\)
    where \(\langle U_1, \ldots, U_n \rangle \in \mathcal B_n^d\) and \(\langle \alpha_1, \ldots, \alpha_n , \beta \rangle \in \kappa^{n+1}\),
    let \(\mathcal E^c = 2^\kappa \setminus \left( \bigcup_{j=1}^n U_j \right)\) and
    \[G_\mathcal E = \beta \cdot \mathbf 1_{\mathcal E^c} + \sum_{j=1}^n \alpha_j \cdot \mathbf 1_{U_j}.\]
    Note that \(\mathcal G := \left\{ G_\mathcal E : n \in \omega, \mathcal E \in \mathcal B_n^d \times \kappa^{n+1} \right\}\)
    is of size \(\kappa\) and consists of functions with finite range.
    
    We now only need show that \(\mathcal G\) is dense in \(\mathsf V\left(D(\kappa)^{\mathrm{exp}\,\kappa} \right)\). 
    First, in a similar way as above, define
    \[B^\ast(f,E,A) = \{ g \in \kappa^{\mathrm{exp}\,\kappa} : g|_E = f, \mathrm{img}(g) \subseteq A\}\]
    for \(E \in [2^\kappa]^{<\aleph_0}\), \(f : E \to \kappa\), and \(A \subseteq \kappa\); note that
    \[\mathcal B^\ast := \{ B^\ast(f,E,A) : E \in [2^\kappa]^{<\aleph_0}, f \in \kappa^E , A \subseteq \kappa \}\] forms a basis for
    \(\mathsf V\left(D(\kappa)^{\mathrm{exp}\,\kappa}\right)\).
    We complete the proof by showing that each nonempty member of \(\mathcal B^\ast\) intersects \(\mathcal G\).
    So consider \(E = \{ g_1 , \ldots, g_n \} \subseteq 2^\kappa\), \(f : E \to \kappa\), and \(A \subseteq \kappa\)
    such that \(B^\ast(f,E,A) \neq \emptyset\).
    Since \(2^\kappa\) is a Hausdorff space, we can find \(\langle U_1, \ldots , U_n \rangle \in \mathcal B_n^d\)
    such that \(g_j \in U_j\) for each \(1 \leq j \leq n\).
    Then let \(\beta = \min A\) and note that \(G_{\mathcal E} \in B^\ast(f,E,A)\) where
    \(\mathcal E = \langle U_1, \ldots, U_n , f(g_1), \ldots, f(g_n), \beta \rangle\).
\end{proof}
Lemma \ref{lem:InitialDensity} allows us to prove a version of the Hewitt-Marczewski-Pondiczery Theorem
for Vietoris products.
\begin{theorem} \label{thm:Density}
    Suppose \(\kappa \leq 2^{d(X)}\) for a space \(X\).
    Then \(d(\mathbb F(X,\mathrm{ord},\kappa)) \leq d(X)\).
    Consequently, \(d(\mathbb K(X,\mathrm{ord},\kappa)) \leq d(X)\) and
    \(d(\mathsf V(X^\kappa)) \leq d(X)\).
\end{theorem}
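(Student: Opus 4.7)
The plan is to mirror the step-function construction of Lemma \ref{lem:InitialDensity}, with two modifications: replace the value set $\mu$ by a dense subset $D \subseteq X$ of cardinality $\mu := d(X)$, and use an injection $\iota : \kappa \hookrightarrow 2^\mu$ (available because $\kappa \leq 2^{d(X)}$) to transport step functions on $2^\mu$ into functions on $\kappa$.

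Concretely, fix such a $D$ and $\iota$, together with the size-$\mu$ basis $\mathcal B$ of $2^\mu$ used in the proof of the lemma, and for each $n \in \omega$ write $\mathcal B_n^d$ for the pairwise-disjoint $n$-tuples from $\mathcal B$. For each datum $\mathcal E = \langle B_1, \ldots, B_n, d_1, \ldots, d_n, d_\star \rangle \in \mathcal B_n^d \times D^{n+1}$, define $f_{\mathcal E} : \kappa \to X$ by $f_{\mathcal E}(\gamma) = d_j$ when $\iota(\gamma) \in B_j$ and $f_{\mathcal E}(\gamma) = d_\star$ otherwise. Let $\mathcal F$ be the collection of all such $f_{\mathcal E}$. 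A routine count gives $\#\mathcal F \leq \mu$, and each $f_{\mathcal E}$ has finite image contained in $D$, so $\mathcal F \subseteq \mathbb F(X,\mathrm{ord},\kappa) \subseteq \mathbb K(X,\mathrm{ord},\kappa) \subseteq \mathsf V(X^\kappa)$.

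To verify density in $\mathsf V(X^\kappa)$, take a nonempty basic open set $[U; \Lambda, V]$ with $\Lambda = \{\gamma_1, \ldots, \gamma_n\}$. After replacing each $V_{\gamma_j}$ by $V_{\gamma_j} \cap U$ (which preserves nonemptiness since $[U;\Lambda,V]$ is nonempty), injectivity of $\iota$ and Hausdorffness of $2^\mu$ yield disjoint $B_1,\ldots,B_n \in \mathcal B$ separating the points $\iota(\gamma_1),\ldots,\iota(\gamma_n)$, and density of $D$ supplies $d_j \in V_{\gamma_j} \cap D$ together with $d_\star \in U \cap D$. The resulting $f_{\mathcal E}$ lies in $[U;\Lambda,V]$. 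Density of $\mathcal F$ in $\mathsf V(X^\kappa)$ then immediately implies density in the subspaces $\mathbb F(X,\mathrm{ord},\kappa)$ and $\mathbb K(X,\mathrm{ord},\kappa)$, since $\mathcal F$ is contained in each and any nonempty relatively open subset is the intersection of a nonempty $\mathsf V(X^\kappa)$-open set with the subspace.

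The only subtle point, and it is a mild one, is that the Vietoris topology imposes a \emph{global} image condition $\mathrm{img}(f_{\mathcal E}) \subseteq U$ rather than a condition only on coordinates in $\Lambda$; this is precisely why $U$ has to be absorbed into every $V_{\gamma_j}$ before the $d_j$ are chosen. This same issue is the reason the theorem cannot be read off from Lemma \ref{lem:InitialDensity} by composing with a restriction map $\mathsf V(X^{2^\mu}) \to \mathsf V(X^\kappa)$: such restrictions are not generally continuous in the Vietoris topology, so the construction must be redone in parallel rather than invoked as a black box.
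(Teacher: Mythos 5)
Your construction is correct and, at its combinatorial core, is the same one the paper uses: step functions determined by pairwise disjoint basic clopen subsets of \(2^{d(X)}\), with finitely many values drawn from a dense subset of \(X\), pulled back along an injection \(\iota : \kappa \to 2^{d(X)}\). The difference lies in how density is verified. The paper builds its dense family inside \(\mathsf V\bigl(D(d(X))^{2^{d(X)}}\bigr)\) via Lemma \ref{lem:InitialDensity} and transports it to \(\mathsf V(X^\kappa)\) through the map \(\Phi(f) = \phi \circ f \circ \iota\), which it asserts is continuous; you instead verify density of the transported family directly against an arbitrary basic open set \([U;\Lambda,V]\), absorbing \(U\) into the coordinate constraints before choosing values. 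Your closing remark about restriction maps is not merely a stylistic caution --- it applies to the paper's own argument. When \(\kappa < 2^{d(X)}\), the condition \(\mathrm{img}(\Phi(f)) \subseteq U\) constrains \(f\) only on \(\iota[\kappa]\), whereas membership in the paper's proposed neighborhood \([\phi^{-1}(U);\Lambda^\ast,W]\) constrains \(f\) on all of \(2^{d(X)}\); a function \(f\) with \(f[\iota[\kappa]] \subseteq \phi^{-1}(U)\) but \(\mathrm{img}(f) \not\subseteq \phi^{-1}(U)\) shows that \(\Phi^{-1}\bigl([U;\emptyset,\emptyset]\bigr)\) need not be open, so the neighborhood claim in the paper's continuity check fails at such points. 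Your direct verification sidesteps this entirely, since it only evaluates the candidate functions and never inverts the transport map, and is therefore the more robust route; what it gives up is only the reusable (and, as stated, repair-needing) assertion that \(\mathsf V(X^\kappa)\) is a continuous image of a Vietoris power of a discrete space. The cardinality count, the containments \(\mathcal F \subseteq \mathbb F(X,\mathrm{ord},\kappa) \subseteq \mathbb K(X,\mathrm{ord},\kappa)\), and the passage from density in \(\mathsf V(X^\kappa)\) to density in these subspaces are all in order.
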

\begin{proof}
    Let \(\phi : d(X) \to X\) be an injection with a dense image and let \(\iota : \kappa \to 2^{d(X)}\) be an injection.
    It is evident that \(\mathrm{img}(\phi)^\kappa\) is dense in \(\mathsf V(X^\kappa)\).
    Then define \[\Phi : \mathsf V\left(D(d(X))^{\mathrm{exp}\,d(X)}\right) \to \mathsf V(X^\kappa)\] by \(\Phi(f) = \phi \circ f \circ \iota\).
    Note that \(\Phi\) maps onto \(\mathrm{img}(\phi)^\kappa\)
    and that, if \(f \in D(d(X))^{\mathrm{exp}\,d(X)}\) has finite range,
    then \(\Phi(f)\) also has finite range.
    So, to finish the proof, we will establish that \(\Phi\) is continuous.
    Toward this end, let \(f \in d(X)^{\mathrm{exp}\,d(X)}\) be such that
    \(\Phi(f) \in [U; \Lambda, V]\) for some \(U \in \mathscr T_X\), \(\Lambda \in [\kappa]^{<\aleph_0}\), and \(V : \Lambda \to \mathscr T_X\).
    Let \(\Lambda^\ast = \iota[\Lambda] \in [2^{d(X)}]^{<\aleph_0}\) and define \(W : \Lambda^\ast \to \wp(d(X))\)
    by the rule \(W_\alpha = \phi^{-1}\left(V_{\iota^{-1}(\alpha)}\right)\).
    Note then that \(O := \left[ \phi^{-1}(U); \Lambda^\ast , W \right]\) is a neighborhood of \(f\)
    in \(\mathsf V\left(D(d(X))^{\mathrm{exp}\,d(X)}\right)\) with \(\Phi[O] \subseteq [U;\Lambda,V]\).
    Hence, \(\Phi\) is continuous.

    By Lemma \ref{lem:InitialDensity}, we can find a dense subset \(D\) of \(\mathsf V\left(D(d(X))^{\mathrm{exp}\,d(X)}\right)\)
    of cardinality \(\kappa\) consisting of functions with finite range.
    Since \(\mathrm{img}(\Phi) = \mathrm{img}(\phi)^\kappa\) is dense in \(\mathsf V(X^\kappa)\), \(\Phi[D] \subseteq \mathbb F(X,\mathrm{ord}, \kappa)\)
    is dense in \(\mathsf V(X^\kappa)\).
    The conclusion of the theorem thus is obtained.
\end{proof}
\begin{proposition} \label{prop:NonDiscrete}
    For any space \(X\) with at least two points and any infinite cardinal \(\kappa\), \(\mathsf V(X^\kappa)\)
    has non-isolated points.
    Moreover, the set of non-constant functions of \(\mathsf V(X^\kappa)\) is a crowded subspace.
\end{proposition}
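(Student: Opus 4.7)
The plan is to prove the "moreover" clause first, since once we know the set of non-constant functions is crowded, it contains no isolated points of itself, and then the same witnesses will show these are not isolated in the ambient space $\mathsf V(X^\kappa)$ either, settling the first claim. Non-constant functions exist in $\mathsf V(X^\kappa)$ as soon as $X$ has two distinct points $x_0, x_1$ and $\kappa \geq 1$ (e.g.\ take $f$ with $f(0)=x_0$ and $f(\alpha)=x_1$ for $\alpha \neq 0$), so the reduction is meaningful.

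For the main step, fix a non-constant $f \in \mathsf V(X^\kappa)$ and a basic open neighborhood $[U;\Lambda,V]$ of $f$. Since $f$ is non-constant, pick $\alpha,\beta < \kappa$ with $f(\alpha) \neq f(\beta)$; because $f \in [U;\Lambda,V]$, both $f(\alpha)$ and $f(\beta)$ lie in $U$. Since $\kappa$ is infinite while $\Lambda \cup \{\alpha,\beta\}$ is finite, I can select some $\gamma \in \kappa \setminus (\Lambda \cup \{\alpha,\beta\})$. Then choose $y \in \{f(\alpha),f(\beta)\} \setminus \{f(\gamma)\}$, which is nonempty because $f(\alpha) \neq f(\beta)$. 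Define $g : \kappa \to X$ by $g(\delta) = f(\delta)$ for $\delta \neq \gamma$ and $g(\gamma) = y$.

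It remains to verify that $g$ has the desired properties. First, $g \neq f$ because $g(\gamma) = y \neq f(\gamma)$. Second, $\mathrm{img}(g) \subseteq \mathrm{img}(f) \cup \{y\} \subseteq U$, since $y \in \{f(\alpha),f(\beta)\} \subseteq \mathrm{img}(f) \subseteq U$. Third, for every $\delta \in \Lambda$ we have $\delta \neq \gamma$, so $g(\delta) = f(\delta) \in V_\delta$. Hence $g \in [U;\Lambda,V]$. Finally, because $\gamma \notin \{\alpha,\beta\}$, we retain $g(\alpha) = f(\alpha) \neq f(\beta) = g(\beta)$, so $g$ is non-constant.

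This shows every basic open neighborhood of $f$ in $\mathsf V(X^\kappa)$ meets the set of non-constant functions in a point other than $f$ itself, which simultaneously says that $f$ is non-isolated in $\mathsf V(X^\kappa)$ and that the subspace of non-constant functions is crowded. The only subtlety — and really the only place one has to be careful — is in choosing $\gamma$ outside $\{\alpha,\beta\}$ so that the witnesses $f(\alpha),f(\beta)$ to non-constancy are preserved under the modification; I anticipate no further obstacle.
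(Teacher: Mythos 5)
Your proof is correct and takes essentially the same approach as the paper: perturb $f$ at finitely many coordinates outside $\Lambda$ using only values already in $\mathrm{img}(f)$, so that membership in $[U;\Lambda,V]$ is preserved. The only (cosmetic) differences are that you change one coordinate while keeping the original witnesses to non-constancy, whereas the paper changes two coordinates and produces new witnesses, and that you derive the first claim from the crowdedness of the non-constant functions rather than proving it separately with an explicit example.
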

\begin{proof}
    We first show that \(\mathsf V(X^\kappa)\) has non-isolated points.
    Let \(p, q \in X\) be distinct points.
    Define \(f : \kappa \to X\) by the rule
    \[f(\alpha) = \begin{cases} p, & \alpha = 0,\\
        q, & \alpha > 0.
    \end{cases}\]
    Note that any neighborhood of \(f\) in \(\mathsf V(X^\kappa)\) contains a function \(\kappa \to X\)
    which takes value \(p\) for all but finitely inputs.
    Hence, \(\{ f \}\) is not an open subset of \(\mathsf V(X^\kappa)\).

    Let \(Y\) consist of all non-constant functions \(\kappa \to X\).
    Let \(f \in Y\) be arbitrary and \([U; \Lambda, V]\) be an arbitrary basic neighborhood of \(f\).
    Then let \(\alpha_1, \alpha_2 \in \kappa \setminus \Lambda\) be such that \(\alpha_1 \neq \alpha_2\)
    and choose \(p \in \mathrm{img}(f) \setminus \{ f(\alpha_1) \}\) and
    \(q \in \mathrm{img}(f) \setminus \{ p \}\).
    Define \(g : \kappa \to X\) by
    \[g(\alpha) = \begin{cases} f(\alpha), & \alpha \in \kappa \setminus \{ \alpha_1, \alpha_2 \},\\ p, & \alpha = \alpha_1,\\ q, & \alpha = \alpha_2.\end{cases}\]
    Note that \(g \neq f\) and that \(g \in Y \cap [U; \Lambda, V]\).
    Hence, \(f\) is not isolated in \(Y\).
    Since \(f \in Y\) was arbitrary, \(Y\) is crowded.
\end{proof}
\begin{proposition} \label{prop:BasicComparison}
    The topology of \(\mathsf V(X^\kappa)\) is finer than the Tychonoff topology on \(X^\kappa\),
    coarser than the box topology on \(X^\kappa\), and it need not be homeomorphic to either.
\end{proposition}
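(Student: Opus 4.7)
The plan has three parts, one for each claim.

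For the first inclusion, I would observe that with $U = X$ and $\Lambda = \{\alpha\}$, a Vietoris-power basic open set reduces to $[X; \{\alpha\}, V] = \pi_\alpha^{-1}(V_\alpha)$, exactly a subbasic element of the Tychonoff topology. Since every Tychonoff subbasic open set is thus Vietoris-power open, the Vietoris power topology refines the Tychonoff topology.

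For the second inclusion, I would show that each basic set $[U; \Lambda, V]$ is a box-open set by exhibiting it as a product. Define $W : \kappa \to \mathscr T_X$ by $W_\alpha = U \cap V_\alpha$ for $\alpha \in \Lambda$ and $W_\alpha = U$ for $\alpha \in \kappa \setminus \Lambda$. Then a routine check using the image and coordinate conditions in the definition of $[U; \Lambda, V]$ shows that
\[
[U; \Lambda, V] = \prod_{\alpha < \kappa} W_\alpha,
\]
which is open in the box topology. Hence the box topology refines the Vietoris power topology.

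For non-homeomorphism, I would specialize to $X = 2$ and $\kappa = \omega$. On one side, $2^\omega$ with the Tychonoff topology is compact by Tychonoff's theorem, whereas $\mathsf V(2^\omega)$ is not compact by Example \ref{example:Cantor}; thus no homeomorphism can exist between them. On the other side, the box topology on $2^\omega$ is discrete, since each singleton $\{f\}$ is the box-basic set $\prod_{n < \omega} \{f(n)\}$, while $\mathsf V(2^\omega)$ has non-isolated points by Proposition \ref{prop:NonDiscrete}; so again no homeomorphism exists.

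There is no serious obstacle here: the two topological inclusions are unwindings of the respective definitions, and the non-homeomorphism witnesses are provided by compactness (to separate Vietoris from Tychonoff) and by the existence of non-isolated points (to separate Vietoris from box). The only mild care needed is to phrase the second inclusion using a total assignment $W$ on $\kappa$, since the original notation $[U; \Lambda, V]$ has $V$ defined only on the finite set $\Lambda$.
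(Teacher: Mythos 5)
Your proposal is correct and follows essentially the same route as the paper: the paper also treats the two inclusions as immediate consequences of the definitions (your explicit product decomposition $[U;\Lambda,V]=\prod_{\alpha<\kappa}W_\alpha$ is just a spelled-out version of this), and it uses exactly the same witnesses for non-homeomorphism, namely compactness of Tychonoff $2^\omega$ versus Example \ref{example:Cantor}, and discreteness of the box topology on $2^\omega$ versus Proposition \ref{prop:NonDiscrete}.
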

\begin{proof}
    It is immediate from the definitions that the Tychonoff topology on \(X^\kappa\) is coarser than the topology
    on \(\mathsf V(X^\kappa)\), and that the topology on \(\mathsf V(X^\kappa)\) is coarser than the box topology
    on \(X^\kappa\).
    To see that, in general, the topology on \(\mathsf V(X^\kappa)\) need not coincide with either the Tychonoff
    nor the box topology, note that the Tychonoff topology on \(2^\omega\) is compact
    and the box topology on \(2^\omega\) is discrete.
    However, by Example \ref{example:Cantor}, \(\mathsf V(2^\omega)\) is not compact, so it cannot be homeomorphic
    to the Tychonoff topology on \(2^\omega\).
    On the other hand, Proposition \ref{prop:NonDiscrete} guarantees that \(\mathsf V(2^\omega)\) is not discrete.
    Consequently, \(\mathsf V(2^\omega)\) cannot be homeomorphic to the box topology on \(2^\omega\).
\end{proof}

We will now turn our attention to uniform spaces (see \cite[{\S}8.1]{Engelking}).
\begin{definition} [{\cite{BellUniformBox}}]
    Given a uniform space \((X, \mathbb D)\), a point \(x \in X\), and \(E \in \mathbb D\),
    we use the notation
    \[E[x] = \{ y \in X : \langle x,y \rangle \in E \}.\]
    Then, for a cardinal \(\kappa\) and \(E \in \mathbb D\), we use the notation
    \[\bar{E} = \left\{ \langle f,g \rangle \in (X^\kappa)^2 : (\forall \alpha \in \kappa)\ \langle f(\alpha),g(\alpha) \rangle \in E \right\}.\]
    We then let \(\bar{\mathbb D}\) be the uniformity on \(X^\kappa\) generated by \(\left\{ \bar{E} : E \in \mathbb D \right\}\).
    Given a uniform space \((X, \mathbb D)\), the \emph{uniform box topology} on \(X^\kappa\), denoted by
    \(\left(X^\kappa , \bar{\mathbb D} \right)\), is the topology on \(X^\kappa\) generated by the uniformity
    \(\bar{\mathbb D}\).
\end{definition}
The uniform box topology and the Vietoris power are generally incomparable topologies.
\begin{example} \label{example:UniformBox}
    Consider the standard uniformity \(\mathbb D\) on \(\mathbb R\) generated by the uniformity base \(\{ E_\varepsilon : \varepsilon > 0\}\) where
    \[E_\varepsilon := \{ \langle x, y \rangle \in \mathbb R^2 : |x-y| < \varepsilon \}.\]
    Then the topologies on \(\mathbb R^\omega\) corresponding to \(\left(\mathbb R^\omega, \bar{\mathbb D}\right)\) and \(\mathsf V(\mathbb R^\omega)\)
    are incomparable.
\end{example}
\begin{proof}
    First, consider the tube \((0,1)^\omega\) in \(\mathsf V(\mathbb R^\omega)\) and define
    \(f : \omega \to \mathbb R\) by the rule \(f(n) = 2^{-n-1}\).
    Note that \(f \in (0,1)^\omega\).
    Then note that every neighborhood of \(f\) in \(\left( \mathbb R^\omega, \bar{\mathbb D} \right)\) fails to be
    contained in \((0,1)^\omega\) as the neighborhood must allow for functions with negative outputs.
    Hence, \((0,1)^\omega\) is not open in \(\left( \mathbb R^\omega, \bar{\mathbb D} \right)\).

    Now consider the entourage \(E_1\) and \(g : \omega \to \mathbb R\) defined by \(g(n) = n\).
    For each \(m \in \omega\), define \(h_m : \omega \to \mathbb R\) by the rule
    \[h_m(n) = \begin{cases} n, & n \leq m, \\ 0, & n > m. \end{cases}\]
    To see that \(\bar{E_1}[g]\) is not open in \(\mathsf V(\mathbb R^\omega)\), consider the fact that any neighborhood
    of \(g\) in \(\mathsf V(\mathbb R^\omega)\) contains \(h_m\) for some \(m \in \omega\).
    Note also that \(h_m \not\in \bar{E_1}[g]\).
\end{proof}

For a uniform space \((X,\mathbb D)\) and a cardinal \(\kappa\), Figure \ref{fig:Comparisons}
details the relationships between the various product topologies discussed here, where \(\mathsf b(X^\kappa)\)
denotes the set \(X^\kappa\) endowed with the box topology.
The \(\to\) arrow means that that the former topology is finer than the latter;
the \(\not\to\) arrow means that the \(\to\) relation fails to hold, in general.
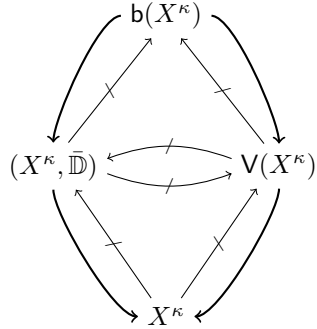
\begin{figure}[h]
    \begin{tikzpicture}
        \node (box) at (0,2) {\(\mathsf b(X^\kappa)\)};
        \node (Vietoris) at (1.5,0) {\(\mathsf V(X^\kappa)\)};
        \node (uniformBox) at (-1.5,0) {\((X^\kappa, \bar{\mathbb D})\)};
        \node (product) at (0,-2) {\(X^\kappa\)};
    
        \draw [thick,->] (box.west) to[out=180,in=90,looseness=0.4] (uniformBox.north);
        \draw [thick,->] (box.east) to[out=0,in=90,looseness=0.4] (Vietoris.north);
    
        \draw [thick,->] (uniformBox.south) to[out=270,in=180,looseness=0.4] (product.west);
        \draw [thick,->] (Vietoris.south) to[out=270,in=0,looseness=0.4] (product.east);
    
        \draw[->] ([yshift=1mm] Vietoris.west) to[bend right=20] node[midway,sloped] {\tiny$/$} ([yshift=1mm] uniformBox.east);
        \draw[->] ([yshift=-1mm] uniformBox.east) to[bend right=20] node[midway,sloped] {\tiny$/$} ([yshift=-1mm] Vietoris.west);

        \draw[->] ([xshift=-2mm] Vietoris.north) to node[midway,sloped] {\tiny\(/\)} ([xshift=2mm] box.south);
        \draw[->] ([xshift=2mm] uniformBox.north) to node[midway,sloped] {\tiny\(/\)} ([xshift=-2mm] box.south);

        \draw[<-] ([xshift=-3mm] Vietoris.south) to node[midway,sloped] {\tiny\(/\)} ([xshift=2mm] product.north);
        \draw[<-] ([xshift=3mm] uniformBox.south) to node[midway,sloped] {\tiny\(/\)} ([xshift=-2mm] product.north);
    \end{tikzpicture}
    \caption{Relations between various product topologies.}
    \label{fig:Comparisons}
\end{figure}

\subsection{The Vietoris Power on Subsets of Naturals}

We give in this section a thorough list of topological properties enjoyed by Vietoris powers of subsets of naturals,
and other related spaces.

\begin{lemma} \label{lem:BasicCompactness}
    For any \(s \in \omega^{<\omega}\), the set \([\![s]\!]\) as a subspace of \(\mathsf V(\omega^\omega)\)
    is homeomorphic to \(k^\omega\) with the Tychonoff product topology where \(k = \# \mathrm{img}(s)\),
    and hence is compact.
\end{lemma}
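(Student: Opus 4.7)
Let $n = \mathrm{len}(s)$ and $A = \mathrm{img}(s)$, so $\#A = k$. By definition of $[\![s]\!] = [s, \mathrm{img}(s)]$, we have $[\![s]\!] = \{f \in \omega^\omega : f|_n = s \text{ and } \mathrm{img}(f) \subseteq A\}$. The crucial first observation is that the condition $f|_n = s$ forces $A = \mathrm{img}(s) \subseteq \mathrm{img}(f)$, which together with $\mathrm{img}(f) \subseteq A$ gives $\mathrm{img}(f) = A$ for every $f \in [\![s]\!]$. As a consequence, whenever $[U; \Lambda, V] \cap [\![s]\!]$ is nonempty and $g \in [\![s]\!]$, the tube constraint $\mathrm{img}(g) \subseteq U$ is automatic, since $\mathrm{img}(g) = A \subseteq U$.

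Fix a bijection $\phi : A \to k$ and define $\Psi : [\![s]\!] \to k^\omega$ by $\Psi(f)(m) = \phi(f(n+m))$. Because the values of $f$ on $n$ are pinned to $s$, this map is a bijection. To see $\Psi$ is continuous, pull back a subbasic Tychonoff-open set $\pi_m^{-1}(\{c\})$ to obtain $[\![s]\!] \cap [\omega; \{n+m\}, \{\langle n+m, \{\phi^{-1}(c)\}\rangle\}]$, which is open in the subspace topology inherited from $\mathsf V(\omega^\omega)$.

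The main step is to show $\Psi$ is open, and this is where the first-paragraph observation pays off. Take a Vietoris-basic neighborhood $[U; \Lambda, V] \cap [\![s]\!]$. By the redundancy of the tube constraint on $[\![s]\!]$, this set coincides with $\{g \in [\![s]\!] : g(\alpha) \in V_\alpha \text{ for all } \alpha \in \Lambda\}$; its image under $\Psi$ is the Tychonoff-open set specified by the finite collection of coordinate constraints $\Psi(g)(\alpha - n) \in \phi(V_\alpha \cap A)$ for $\alpha \in \Lambda \setminus n$, noting that any $\alpha \in \Lambda \cap n$ contributes no further restriction because $f(\alpha) = s(\alpha)$ is already fixed and the set is nonempty. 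Thus $\Psi$ is a homeomorphism, and compactness of $[\![s]\!]$ then follows from Tychonoff's theorem since $k$ is a finite discrete (hence compact) space. The only conceptual point requiring care is recognizing that the image constraint in the Vietoris basis collapses to something trivial on $[\![s]\!]$; once that is noticed, the rest is routine bookkeeping.
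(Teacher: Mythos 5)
Your proof is correct and follows essentially the same route as the paper's: both exhibit the shift/concatenation bijection between $[\![s]\!]$ and the Tychonoff power of the finite discrete set $\mathrm{img}(s)\cong k$, and both rest on the observation that every $f\in[\![s]\!]$ has $\mathrm{img}(f)=\mathrm{img}(s)$, so the tube constraint of a Vietoris-basic set is vacuous on a nonempty intersection with $[\![s]\!]$. The only difference is cosmetic --- you work with the inverse map and spell out the collapse of the image constraint explicitly, a point the paper leaves implicit when asserting that images of Tychonoff-basic sets are the sets $[\![s^{\frown}t]\!]$.
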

\begin{proof}
    First, note that \(k\) and \(\mathrm{img}(s)\), considered as discrete spaces, are homeomorphic since they
    are in bijective correspondence.
    Hence \(k^\omega\) and \(\mathrm{img}(s)^\omega\) are homeomorphic.
    Consider the mapping \(\phi : \mathrm{img}(s)^\omega \to \mathsf V(\omega^\omega)\) defined by \(\phi(f) = s^{\frown} f\).
    Note that \(\phi\) is a bijection \(\mathrm{img}(s)^\omega \to [\![s]\!]\).
    We finish by showing that \(\phi\) is a homeomorphism onto its range.
    Note that the set of
    \[[g]_n := \{ f \in \mathrm{img}(s)^\omega : f|_n = g|_n \},\]
    where \(n \in \omega\) and \(g \in \mathrm{img}(s)^\omega\), forms a basis
    for \(\mathrm{img}(s)^\omega\).
    Then note that \(\phi^{``}[g]_n = [\![s^{\frown} (g|_n)]\!]\) for any \(g \in \mathrm{img}(s)^\omega\).
    Since \(\phi\) is a bijection, this establishes that \(\phi\) is both an open and continuous map.
    This completes the proof.
\end{proof}

\begin{theorem} \label{thm:nOmegaProps}
    For any natural number \(n \geq 2\), \(\mathsf V(n^\omega)\) has the following properties:
    \begin{enumerate}[label=(\alph*)]
        \item \label{nOmegaSecondCount}
        \(\mathsf V(n^\omega)\) is second-countable.
        \item \label{nOmegaNotHomo}
        \(\mathsf V(n^\omega)\) is not homogeneous.
        \item \label{nOmegaZeroDim}
        \(\mathsf V(n^\omega)\) is zero-dimensional.
        \item \label{nOmegaSigmaComp}
        \(\mathsf V(n^\omega)\) is \(\sigma\)-compact.
        \item \label{nOmegaLocalCompact}
        \(\mathsf V(n^\omega)\) is locally compact.
    \end{enumerate}
    Consequently, \(\mathsf V(n^\omega)\) is separable, metrizable, Baire, and not a topological group.
\end{theorem}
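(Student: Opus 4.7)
My plan is to derive all five properties and their consequences from the explicit basis in Remark \ref{rmk:DiscreteSpaceBasis}, together with Lemma \ref{lem:BasicCompactness} and Proposition \ref{prop:BasicProductOfClosed}, exploiting the finiteness of \(n\) at each step.

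For \ref{nOmegaSecondCount}, I would simply count: the basis \(\{[s,A] : s \in n^{<\omega}, A \subseteq n\}\) given by Remark \ref{rmk:DiscreteSpaceBasis} is countable because \(n^{<\omega}\) is countable and \(\wp(n)\) is finite. For \ref{nOmegaZeroDim}, the same basis consists of clopen sets by Proposition \ref{prop:BasicProductOfClosed} (applied to the closed set \(A\) and the closed singletons determining \(s\)), so the space has a basis of clopen sets. For \ref{nOmegaNotHomo}, I would point out that for any \(c \in n\), the set \([\{c\};\emptyset,\emptyset] = \{c\}^\omega\) is the open singleton \(\{\vec c\}\), so the constant function \(\vec c\) is isolated; combined with Proposition \ref{prop:NonDiscrete}, which supplies non-isolated points, homogeneity fails.

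For \ref{nOmegaSigmaComp} and \ref{nOmegaLocalCompact}, the key observation I would verify first is that for every \(f \in n^\omega\) there is some \(k \in \omega\) with \(\mathrm{img}(f|_k) = \mathrm{img}(f)\); this holds because \(\mathrm{img}(f) \subseteq n\) is finite, so taking \(k\) larger than one representative preimage of each element of \(\mathrm{img}(f)\) suffices. Setting \(s = f|_k\), we get \(f \in [\![s]\!]\). Hence \(\mathsf V(n^\omega) = \bigcup_{s \in n^{<\omega}} [\![s]\!]\), a countable union of compact sets by Lemma \ref{lem:BasicCompactness}, establishing \ref{nOmegaSigmaComp}. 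Moreover \([\![s]\!]\) is an open neighborhood of \(f\) and is compact, giving \ref{nOmegaLocalCompact}.

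For the consequences: separability follows from \ref{nOmegaSecondCount}; Hausdorffness of \(\mathsf V(n^\omega)\) is inherited from the finer-than-Tychonoff comparison in Proposition \ref{prop:BasicComparison} together with Hausdorffness of \(n^\omega\), and then a second-countable Hausdorff space with a clopen basis is regular, hence metrizable by Urysohn; the Baire property follows from \ref{nOmegaLocalCompact} and Hausdorffness. Finally, since a topological group is necessarily homogeneous, \ref{nOmegaNotHomo} rules out any topological group structure compatible with the topology. The only nontrivial step is the verification that every \(f \in n^\omega\) lies in some \([\![s]\!]\), which is where the finiteness of \(n\) enters decisively; the rest amounts to unpacking the definitions and invoking standard classical results.
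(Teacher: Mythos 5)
Your proposal is correct and follows essentially the same route as the paper: countability of the basis from Remark \ref{rmk:DiscreteSpaceBasis} for \ref{nOmegaSecondCount} and \ref{nOmegaZeroDim}, isolated constant functions versus Proposition \ref{prop:NonDiscrete} for \ref{nOmegaNotHomo}, the decomposition \(\mathsf V(n^\omega) = \bigcup\{[\![s]\!] : s \in n^{<\omega}\}\) with Lemma \ref{lem:BasicCompactness} for \ref{nOmegaSigmaComp} and \ref{nOmegaLocalCompact}, and the same standard deductions for the consequences. The only difference is that you spell out a few steps the paper leaves implicit (e.g., why every \(f\) lies in some \([\![f|_k]\!]\), and Hausdorffness), which is harmless.
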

\begin{proof}
    \ref{nOmegaSecondCount}: This follows from Remark \ref{rmk:DiscreteSpaceBasis}.

    \ref{nOmegaNotHomo}: Note that constant functions are isolated, and others are not by Proposition \ref{prop:NonDiscrete}.

    \ref{nOmegaZeroDim}: This follows immediately from Remark \ref{rmk:DiscreteSpaceBasis}.

    \ref{nOmegaSigmaComp}:
    By Lemma \ref{lem:BasicCompactness}, \([\![s]\!]\) is compact for any \(s \in n^{<\omega}\).
    Then, since \(\mathsf V(n^\omega) = \bigcup \{ [\![s]\!] : s \in n^{<\omega} \},\)
    we see that \(\mathsf V(n^\omega)\) is \(\sigma\)-compact.

    \ref{nOmegaLocalCompact}:
    For any \(f \in \mathsf V(n^\omega)\), let \(m \in \omega\) be large enough so that
    \(\mathrm{img}(f) = \mathrm{img}(f|_m)\).
    Then \([\![ f|_m ]\!]\) is a compact neighborhood of \(f\) by Lemma \ref{lem:BasicCompactness}.

    Note that \(\mathsf V(n^\omega)\) is separable as it is second-countable.
    Since \(\mathsf V(n^\omega)\) is a zero-dimensional Hausdorff space, \(\mathsf V(n^\omega)\) is completely regular.
    Moreover, since \(\mathsf V(n^\omega)\) is second-countable, \(\mathsf V(n^\omega)\) is metrizable by Urysohn's Metrization Theorem.
    \(\mathsf V(n^\omega)\) is Baire as it is a locally compact Hausdorff space.
    Lastly, we note that all topological groups are homogeneous, and, since \(\mathsf V(n^\omega)\)
    fails to be homogeneous, it cannot be a topological group.
\end{proof}

\begin{theorem} \label{thm:KOmegaOrd}
    The space \(\mathbb K(\omega, \mathrm{ord})\) has the following properties:
    \begin{enumerate}[label=(\alph*)]
        \item \label{KOmegaOrdSecondCountable}
        \(\mathbb K(\omega, \mathrm{ord})\) is second-countable.
        \item \label{KOmegaHomogeneous}
        \(\mathbb K(\omega, \mathrm{ord})\) is not homogeneous.
        \item \label{KOmegaOrdZeroDim}
        \(\mathbb K(\omega, \mathrm{ord})\) is zero-dimensional.
        \item \label{KOmegaOrdSigmaCompact}
        \(\mathbb K(\omega, \mathrm{ord})\) is \(\sigma\)-compact.
        \item \label{KOmegaOrdLocallyCompact}
        \(\mathbb K(\omega, \mathrm{ord})\) is locally compact.
        \item \label{KOmegaOrdComplete}
        \(\mathbb K(\omega, \mathrm{ord})\) is completely metrizable.
    \end{enumerate}
    Consequently, \(\mathbb K(\omega, \mathrm{ord})\) is separable, Baire, not a topological group, and embeds as a subspace of \(2^\omega\).
\end{theorem}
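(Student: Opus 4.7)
The plan is to follow and extend the pattern established in Theorem~\ref{thm:nOmegaProps}. The first observation is that since $\omega$ is discrete it is anticompact, and the cardinal $\sup\{\#K : K \in K(\omega)\} + \omega$ reduces to $\omega$; thus $\mathbb{K}(\omega, \mathrm{ord}) = \mathbb{F}(\omega, \mathrm{ord})$ consists of the functions $\omega \to \omega$ with finite image, viewed as a subspace of $\mathsf{V}(\omega^\omega)$. From Remark~\ref{rmk:DiscreteSpaceBasis}, the sets $[s, F]$ with $s \in \omega^{<\omega}$ and $F \in [\omega]^{<\aleph_0}$ containing $\mathrm{img}(s)$ form a countable clopen basis for the subspace; indeed, given $f \in [s, A] \cap \mathbb{K}(\omega, \mathrm{ord})$ with $A \subseteq \omega$, the finite set $F := \mathrm{img}(f)$ satisfies $f \in [s, F] \subseteq [s, A]$. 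This immediately yields \ref{KOmegaOrdSecondCountable} and \ref{KOmegaOrdZeroDim}.

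For \ref{KOmegaHomogeneous}, the constant function $\vec{c}$ is isolated via $\{\vec{c}\} = [\langle c \rangle, \{c\}]$, while a non-constant $f \in \mathbb{K}(\omega, \mathrm{ord})$ is non-isolated by the Proposition~\ref{prop:NonDiscrete} argument. The essential check here is that the perturbation $g$ constructed in that proof has image contained in $\mathrm{img}(f)$, so it remains inside $\mathbb{K}(\omega, \mathrm{ord})$.

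For \ref{KOmegaOrdSigmaCompact} and \ref{KOmegaOrdLocallyCompact}, I write $\mathbb{K}(\omega, \mathrm{ord}) = \bigcup\{ [\![s]\!] : s \in \omega^{<\omega}\}$, noting that any $f$ with finite image lies in $[\![f|_m]\!]$ once $m$ is large enough that $\mathrm{img}(f|_m) = \mathrm{img}(f)$. Each $[\![s]\!]$ is compact by Lemma~\ref{lem:BasicCompactness}, so $\mathbb{K}(\omega, \mathrm{ord})$ is $\sigma$-compact; the same $[\![f|_m]\!]$ serves as a compact neighborhood of $f$, giving local compactness.

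The one new ingredient beyond the template of Theorem~\ref{thm:nOmegaProps} is \ref{KOmegaOrdComplete}, and this is the main (though mild) obstacle: second-countability only yields metrizability via Urysohn, so complete metrizability needs more. I invoke the stronger fact that a locally compact Hausdorff second-countable space is Polish, combining \ref{KOmegaOrdLocallyCompact} and \ref{KOmegaOrdSecondCountable} with Hausdorffness inherited from $\mathsf{V}(\omega^\omega)$. The stated consequences are then routine: separability from \ref{KOmegaOrdSecondCountable}; Baireness from local compactness plus Hausdorff (or from Polish); embedding into $2^\omega$ from the standard fact that a second-countable zero-dimensional Hausdorff space embeds into $2^\omega$ via the indicator map associated with a countable clopen basis; and failure to be a topological group since all topological groups are homogeneous but $\mathbb{K}(\omega, \mathrm{ord})$ is not.
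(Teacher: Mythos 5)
Your proposal is correct, and for items (a)--(e) it follows the paper's proof exactly; indeed you supply two small checks the paper leaves implicit, namely that the sets \([s,F]\) with \(F\) finite give a \emph{countable} basis on the subspace, and that the perturbation from Proposition \ref{prop:NonDiscrete} has image inside \(\mathrm{img}(f)\) and so stays in \(\mathbb K(\omega,\mathrm{ord})\). The only genuine divergence is item (f). You obtain complete metrizability from the general fact that a second-countable locally compact Hausdorff space is Polish, piggybacking on (a) and (e). The paper instead writes down an explicit compatible metric: \(d(f,g)=2\) when \(\mathrm{img}(f)\neq\mathrm{img}(g)\), \(d(f,g)=2^{-m(f,g)}\) when the images agree but \(f\neq g\) (where \(m(f,g)\) is the first coordinate of disagreement), and \(0\) otherwise, and then verifies the triangle inequality, compatibility with the topology, and completeness by hand. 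Your route is shorter and reuses machinery already established; the paper's route is self-contained and makes the completeness mechanism concrete (a Cauchy sequence must eventually stabilize its finite image and thereafter converge as in \(\omega^\omega\)), at the cost of a longer case analysis. The derivation of the stated consequences --- separability, Baireness, the embedding into \(2^\omega\), and failure to be a topological group --- is essentially the same in both.
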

\begin{proof}
    Items \ref{KOmegaOrdSecondCountable}, \ref{KOmegaHomogeneous}, and \ref{KOmegaOrdZeroDim}
    follow in the same way as they did in Theorem \ref{thm:nOmegaProps}.

    \ref{KOmegaOrdSigmaCompact}:
    Note that \(\mathbb K(\omega, \mathrm{ord}) = \bigcup \{ [\![s]\!] : s \in \omega^{<\omega} \}\) and that each \([\![s]\!]\)
    is compact by Lemma \ref{lem:BasicCompactness}.

    \ref{KOmegaOrdLocallyCompact}:
    For any \(f \in \mathbb K(\omega, \mathrm{ord})\), let \(m \in \omega\) be large enough so that
    \(\mathrm{img}(f) = \mathrm{img}(f|_m)\).
    Then \([\![ f|_m ]\!]\) is a compact neighborhood of \(f\) by Lemma \ref{lem:BasicCompactness}.
    
    \ref{KOmegaOrdComplete}:
    It follows from \ref{KOmegaOrdSecondCountable}, \ref{KOmegaOrdZeroDim}, and Urysohn's Metrization Theorem
    that \(\mathbb K(\omega, \mathrm{ord})\) is metrizable, but we exhibit here a compatible metric on \(\mathbb K(\omega, \mathrm{ord})\) which is complete.
    For \(f,g \in \mathbb K(\omega, \mathrm{ord})\) with \(f \neq g\), let
    \[m(f,g) = \min\{ n \in \omega : f(n) \neq g(n) \}.\]
    Then define \(d : \mathbb K(\omega, \mathrm{ord})^2 \to [0,2]\) by
    \[d(f,g) = \begin{cases} 2, & \mathrm{img}(f) \neq \mathrm{img}(g),\\
        2^{-m(f,g)}, & \mathrm{img}(f) = \mathrm{img}(g) \wedge f \neq g,\\
        0, & f = g.
    \end{cases}\]
    Note that the equivalence of \(f = g\) and \(d(f,g) = 0\) follows immediately from the
    definition.
    It is also clear that \(d(f,g) = d(g,f)\) for all \(f, g \in \mathbb K(\omega, \mathrm{ord})\).
    We verify the triangle inequality by cases.
    So let \(f,g,h \in \mathbb K(\omega, \mathrm{ord})\).
    If \(\mathrm{img}(f) \neq \mathrm{img}(g)\) and \(\mathrm{img}(g) \neq \mathrm{img}(h)\),
    then \[d(f,h) = 2 \leq 4 = d(f,g) + d(g,h).\]
    If \(\mathrm{img}(f) = \mathrm{img}(h)\) and \(\mathrm{img}(f) \neq \mathrm{img}(g)\),
    then \[d(f,h) = 2^{-m(f,h)} \leq 1 \leq 2 = d(f,g) \leq d(f,g) + d(g,h).\]
    If \(\mathrm{img}(f) = \mathrm{img}(g)\) and \(\mathrm{img}(g) \neq \mathrm{img}(h)\),
    then \(d(f,h) = 2 = d(g,h) \leq d(f,g) + d(g,h)\).
    If \(\mathrm{img}(f) = \mathrm{img}(g) = \mathrm{img}(h)\), then the triangle inequality holds because
    the restriction of \(d\) in this context aligns with a standard compatible metric on \(\omega^\omega\).

    To see that this metric is compatible with \(\mathbb K(\omega, \mathrm{ord})\), first consider
    \([s,A]\) and \(f \in [s, A]\).
    Let \(n = \mathrm{len}(s)\).
    Note that \(B_d(f; 2^{-n-1}) \subseteq [s,A]\).
    On the other hand, consider \(B_d(f;\varepsilon)\) and \(g \in B_d(f;\varepsilon)\).
    If \(\varepsilon > 2\), \(g \in [\emptyset,\mathrm{img}(g)]\subseteq \mathbb K(\omega, \mathrm{ord}) = B_d(f;\varepsilon)\).
    Otherwise, \(\varepsilon \leq 2\).
    Then \(d(f,g) < \varepsilon \leq 2\) means that \(\mathrm{img}(f) = \mathrm{img}(g)\).
    Let \(n \in \omega\) be large enough such that \(\mathrm{img}(g|_n) = \mathrm{img}(g)\)
    and \(2^{-n} < \varepsilon - d(f,g)\).
    It follows that \(g \in [g|_n, \mathrm{img}(f)] \subseteq B_d(f; \varepsilon)\).
    Indeed, consider \(h \in [g|_n, \mathrm{img}(f)]\) and note that \(h|_n = g|_n\).
    Then \(m(g,h) \geq n\) and
    \[\mathrm{img}(g) = \mathrm{img}(g|_n) = \mathrm{img}(h|_n) \subseteq \mathrm{img}(h) \subseteq \mathrm{img}(g) = \mathrm{img}(f).\]
    It follows that
    \[d(f,h) = 2^{-m(f,h)} \leq 2^{-m(f,g)} + 2^{-m(g,h)} \leq d(f,g) + 2^{-n} < \varepsilon.\]
    
    The fact that \(d\) is complete follows from the fact that any \(d\)-Cauchy sequence must eventually
    consist of functions with equal finite image.
    Past that point, the functions will converge to the usual limit from \(\omega^\omega\).
\end{proof}

In the context of discrete spaces, Theorems \ref{thm:nOmegaProps} and \ref{thm:KOmegaOrd},
and Proposition \ref{prop:LargeDiscreteNotLindelof} fully characterize
when \(\mathbb K(X,\mathrm{ord})\) is Lindel\"{o}f, and even \(\sigma\)-compact.
We also show here that \(\mathbb K(X,\mathrm{ord})\) is not Lindel\"{o}f whenever \(X\) is an uncountable ordinal
with the usual order topology.
\begin{proposition}
    If \(\alpha\) is an uncountable ordinal, then \(\mathbb K(\alpha, \mathrm{ord})\) is not Lindel\"{o}f.
\end{proposition}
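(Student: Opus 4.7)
The plan is to exhibit an open cover of $\mathbb{K}(\alpha,\mathrm{ord})$ with no countable subcover, splitting into two cases according to whether the ordinal $\alpha$ with its order topology is itself Lindel\"{o}f.

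If $\alpha$ is not Lindel\"{o}f---equivalently, $\alpha$ is a limit of uncountable cofinality, as in $\alpha = \omega_1$---then the projection $\pi_0 : \mathbb{K}(\alpha,\mathrm{ord}) \to \alpha$ is continuous (since the Vietoris power topology is finer than the Tychonoff topology on $\alpha^\kappa$ by Proposition~\ref{prop:BasicComparison}) and surjective via the diagonal $x \mapsto \vec{x}$. As continuous images of Lindel\"{o}f spaces are Lindel\"{o}f and $\alpha$ is not, neither is $\mathbb{K}(\alpha,\mathrm{ord})$.

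If instead $\alpha$ is Lindel\"{o}f uncountable (so $\alpha$ is either a successor or a limit of countable cofinality), then necessarily $\alpha > \omega_1$, so $\omega_1 \in \alpha$; moreover $[0,\omega_1] \subseteq \alpha$ is compact of cardinality $\omega_1$, forcing the domain cardinal $\kappa$ of $\mathbb{K}(\alpha,\mathrm{ord})$ to satisfy $\kappa \geq \omega_1$. I will then consider the cover
\[\mathscr{U} = \{U_\xi : \xi < \kappa\} \cup \{W\}, \quad U_\xi := \pi_\xi^{-1}(\alpha \setminus \{\omega_1\}), \quad W := [(0,\alpha);\emptyset,\emptyset].\]
Each $U_\xi$ is open as $\{\omega_1\}$ is closed in $\alpha$, and $W$ is a basic open tube. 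Coverage: if $f$ avoids every $U_\xi$ then $f \equiv \omega_1$, so $\mathrm{img}(f) = \{\omega_1\} \subseteq (0,\alpha)$ and $f \in W$. To defeat a putative countable subcover $\{U_{\xi_n} : n \in \omega\} \cup \{W\}$, pick $i_0 \in \kappa \setminus \{\xi_n : n \in \omega\}$ (available since $\kappa \geq \omega_1$) and define $f(i_0) = 0$ and $f(i) = \omega_1$ for $i \neq i_0$; the image $\{0,\omega_1\}$ is finite, hence compact, placing $f \in \mathbb{K}(\alpha,\mathrm{ord})$, yet $f \notin W$ (since $0 \in \mathrm{img}(f)$) and $f \notin U_{\xi_n}$ for any $n$ (since $f(\xi_n) = \omega_1$).

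The main obstacle is this Lindel\"{o}f-$\alpha$ case---exemplified by $\alpha = \omega_1+1$, which is compact---where the continuous-image shortcut collapses; the explicit cover must then exploit that $\omega_1$ is non-isolated in $\alpha$ while the uncountable domain $\kappa$ affords enough free coordinates to sabotage any countable selection.
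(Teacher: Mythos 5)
Your proof is correct, but it is organized differently from the paper's. The paper splits into the cases \(\alpha = \omega_1\) and \(\alpha > \omega_1\), giving an explicit cover in each: for \(\omega_1\) the tubes \([\beta;\emptyset,\emptyset]\), \(\beta<\omega_1\) (using boundedness of compact subsets of \(\omega_1\)), and for \(\alpha>\omega_1\) a cover consisting of the tube \([\omega_1;\emptyset,\emptyset]\) together with the coordinate sets \(\{f: f(\beta)>\gamma\}\), defeated by diagonalizing onto an unused coordinate \(\beta_\ast\in\kappa\). You instead split according to whether \(\alpha\) is Lindel\"{o}f. In the non-Lindel\"{o}f case you get a genuinely softer argument: \(\pi_0\) is continuous on \(\mathbb K(\alpha,\mathrm{ord})\) because the Vietoris power refines the Tychonoff product, it is onto \(\alpha\) via constant functions, and Lindel\"{o}fness passes to continuous images; this disposes of \(\omega_1\), \(\omega_2\), \(\omega_1\cdot 2\), etc.\ in one stroke, whereas the paper handles ordinals like \(\omega_2\) through its second, more elaborate cover. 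In the Lindel\"{o}f case your cover \(\{W\}\cup\{\pi_\xi^{-1}(\alpha\setminus\{\omega_1\}):\xi<\kappa\}\) is the same device as the paper's second cover --- one tube to catch the exceptional function(s), plus \(\kappa\geq\omega_1\) coordinate preimages, defeated by a free coordinate --- so the two arguments coincide in spirit there. What your route buys is brevity and conceptual clarity in the first case; what the paper's buys is uniform explicitness (every case is witnessed by a concrete cover, no appeal to the classification of Lindel\"{o}f ordinals). One small remark: your closing comment that the cover ``must exploit that \(\omega_1\) is non-isolated in \(\alpha\)'' is not accurate --- your argument never uses non-isolatedness, and any fixed nonzero point of \(\alpha\) would serve in place of \(\omega_1\) in the sets \(U_\xi\); the only role of \(\omega_1\) is to force \(\kappa\geq\omega_1\) via the compact set \([0,\omega_1]\). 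This does not affect correctness.
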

\begin{proof}
    We start by showing that \(\mathbb K(\omega_1, \mathrm{ord})\) is not Lindel\"{o}f.
    Consider \[\mathscr U := \{ [\beta; \emptyset, \emptyset] : \beta < \omega_1 \},\]
    an open cover of \(\mathbb K(\omega_1,\mathrm{ord})\).
    Note that \(\mathscr U\) has no countable subcover.

    Now suppose \(\alpha > \omega_1\) and note that
    \[\kappa := \sup\{ \# K : K \in K(\alpha) \} \geq \omega_1\]
    since \([0,\omega_1] \in K(\alpha)\).
    Then consider
    \[\mathscr U := \{ [\omega_1; \emptyset, \emptyset] \} \cup
    \left\{ \{ f \in \mathbb K(\alpha, \mathrm{ord}) : f(\beta) > \gamma \} : \langle \beta, \gamma \rangle \in \kappa \times \omega_1 \right\},\]
    an open cover of \(\mathbb K(\alpha, \mathrm{ord})\).
    Let \(\{ \langle \beta_n, \gamma_n \rangle : n \in \omega \} \subseteq \kappa \times \omega_1\) be arbitrary and let
    \[\mathscr V = \{ [\omega_1; \emptyset, \emptyset] \} \cup
    \left\{ \{ f \in \mathbb K(\alpha, \mathrm{ord}) : f(\beta_n) > \gamma_n \} : n \in \omega \right\}.\]
    Choose \(\beta_\ast \in \kappa \setminus \{ \beta_n : n \in \omega \}\) and define \(g : \kappa \to \alpha\) by
    \[g(\delta) = \begin{cases} 0, & \delta \neq \beta_\ast, \\ \omega_1, & \delta = \beta_\ast. \end{cases}\]
    Note that \(g \in \mathbb K(\alpha, \mathrm{ord}) \setminus \bigcup \mathscr V\).
    Consequently, \(\mathscr U\) has no countable subcover.
\end{proof}
\begin{question}
    Is \(\mathbb K(\alpha,\mathrm{ord})\) \(\sigma\)-compact or, less generally, Lindel\"{o}f for ordinals \(\alpha \in (\omega, \omega_1)\)?
    In particular, is \(\mathbb K(\omega + 1,\mathrm{ord})\) \(\sigma\)-compact?
\end{question}

Before we elaborate on properties of \(\mathsf V(\omega^\omega)\) we remind the reader of a few notions that will be relevant.

A family \(\mathcal A \subseteq [\omega]^\omega\) is said to be \emph{almost disjoint} if, for each pair of distinct
\(A, B \in \mathcal A\), \(A \cap B\) is finite.
It is well-known (see, for example, \cite[Lemma III.1.16]{Kunen2011}) that an almost disjoint family of cardinality
\(\mathfrak c\) exists.

For a linear order \(<\) on a set \(X\), we say that \(A \subseteq X\) is \emph{order-convex} if,
for \(a,b \in A\) and \(x \in X\), if \(a < x < b\), then \(x \in A\).
A \(T_1\) space \(X\) is a \emph{GO-space} (for \emph{generalized order space}) if there exists a linear order \(<\) on \(X\)
such that every point has a local basis consisting of order-convex sets.

Note that \(\mathbb K(\omega, \mathrm{ord})\) is a dense open subspace of \(\mathsf V(\omega^\omega)\) but,
despite the properties recorded in Theorem \ref{thm:KOmegaOrd}, \(\mathsf V(\omega^\omega)\) does not behave as nicely.
\begin{theorem} \label{thm:BairePropertyList}
    The space \(\mathsf V(\omega^\omega)\) has the following properties:
    \begin{enumerate}[label=(\alph*)]
        \item \label{VBaireNotHomo}
        \(\mathsf V(\omega^\omega)\) is not homogeneous.
        \item \label{VBaireFiniteRange}
        Every function with finite range has a local basis consisting of compact sets.
        \item \label{VBaireNotAllCompactNhoods}
        Functions with infinite range do not have compact neighborhoods.
        \item \label{VBaireNotLocallyCompact}
        \(\mathsf V(\omega^\omega)\) is not locally compact.
        \item \label{VBaireNotScattered}
        \(\mathsf V(\omega^\omega)\) is not scattered.
        \item \label{VBaireSep}
        \(\mathsf V(\omega^\omega)\) is separable.
        \item \label{VBaireFirstCount}
        \(\mathsf V(\omega^\omega)\) is first-countable.
        \item \label{VBaireZeroDim}
        \(\mathsf V(\omega^\omega)\) is zero-dimensional.
        \item \label{VBaireBaire}
        \(\mathsf V(\omega^\omega)\) is Baire.
        \item \label{VBaireWeight}
        \(w(\mathsf V(\omega^\omega)) = \mathfrak c\).
        \item \label{VBaireSpread}
        \(s(\mathsf V(\omega^\omega)) = \mathfrak c\).
        \item \label{VBaireExtent}
        \(e(\mathsf V(\omega^\omega)) = \mathfrak c\).
        \item \label{VBaireNotGO}
        \(\mathsf V(\omega^\omega)\) is not a GO-space.
        \item \label{VBaireNotMenger}
        \(\mathsf V(\omega^\omega)\) is not Menger.
        \item \label{VBaireSquare}
        There is a continuous bijection \(\mathsf V(\omega^\omega) \to \mathsf V(\omega^\omega)^2\).
    \end{enumerate}
    Consequently, \(\mathsf V(\omega^\omega)\) is completely regular, not \(\sigma\)-compact, not metrizable, not hereditarily separable,
    and not hereditarily Lindel\"{o}f.
\end{theorem}
\begin{proof}
    \ref{VBaireNotHomo}:
    As in Theorem \ref{thm:nOmegaProps}, constant functions in \(\mathsf V(\omega^\omega)\) are isolated, and others are not by Proposition \ref{prop:NonDiscrete}.

    \ref{VBaireFiniteRange}:
    Let \(f \in \omega^\omega\) have finite range and \(n \in \omega\) be large enough so that \(\mathrm{img}(f) = \mathrm{img}(f|_n)\).
    Note then that \(\{ [\![ f|_{n+k} ]\!] : k \in \omega \}\) is a local basis at \(f\) consisting of compact sets by
    Lemma \ref{lem:BasicCompactness}.

    \ref{VBaireNotAllCompactNhoods}:
    Let \(f \in \omega^\omega\) have infinite range and consider, for \(n \in \omega\),
    \(U_n := [f|_{n+1}, \mathrm{img}(f)]\), a closed neighborhood of \(f\).
    We show that \(U_n\) is not compact.
    Consider \[\mathscr U := \{ [(f|_{n+1})^{\frown} s, \mathrm{img}(f)] : s \in \mathrm{img}(f)^{<\omega} \},\]
    an open cover of \(U_n\).
    Then let \(s_1,\ldots, s_m \in \mathrm{img}(f)^{<\omega}\) be arbitrary.
    It is evident that we can find a function \(g \in \omega^\omega\) such that
    \[g \in U_n \setminus \bigcup_{j=1}^m [(f|_{n+1})^{\frown} s_j, \mathrm{img}(f)].\]
    Thus, \(U_n\) is not compact.
    Note, consequently, that any neighborhood of \(f\) must contain some \(U_n\), so no neighborhood of \(f\) is compact.

    \ref{VBaireNotLocallyCompact}:
    This follows immediately from \ref{VBaireNotAllCompactNhoods}.

    \ref{VBaireNotScattered}:
    This follows immediately from Proposition \ref{prop:NonDiscrete}.
    
    \ref{VBaireSep}:
    This follows immediately from Theorem \ref{thm:Density}, but also very directly by noting that
    the set of eventually constant sequences form a countable dense subset of \(\mathsf V(\omega^\omega)\).

    \ref{VBaireFirstCount}:
    Given any \(f \in \omega^\omega\), note that \(\{ [ f|_{n+1}, \mathrm{img}(f) ] : n \in \omega \}\) is a local basis at \(f\)
    in \(\mathsf V(\omega^\omega)\).

    \ref{VBaireZeroDim}:
    This follows immediately by Remark \ref{rmk:DiscreteSpaceBasis}.

    \ref{VBaireBaire}:
    By Theorem \ref{thm:KOmegaOrd}, we know that \(\mathbb K(\omega, \mathrm{ord})\) is Baire.
    It is also evident that \(\mathbb K(\omega, \mathrm{ord})\) is a dense subspace of \(\mathsf V(\omega^\omega)\).
    It follows that \(\mathsf V(\omega^\omega)\) is Baire.
    
    \ref{VBaireWeight}:
    It is immediate that \(w(\mathsf V(\omega^\omega)) \leq \mathfrak c\) since the basis defined
    in Remark \ref{rmk:DiscreteSpaceBasis} has a cardinality that is bounded by \(\mathfrak c\) in this case.
    So consider \(\kappa < \mathfrak c\) and a \(\kappa\)-sized collection
    of basic open subsets of \(\mathsf V(\omega^\omega)\),
    \[
        \{ [s_\alpha, A_\alpha] : s_\alpha \in \omega^{<\omega}, A_\alpha \subseteq \omega, \alpha < \kappa \}.
    \]
    Let \(B \subseteq \omega\) be such that \(B \neq A_\alpha\) for any \(\alpha < \kappa\). Note that \(B^\omega\) is an open set in \(\mathsf V(\omega^\omega)\).
    We will show that
    \[B^\omega \neq \bigcup \{ [s_\alpha, A_\alpha] : \alpha < \kappa \wedge [s_\alpha, A_\alpha] \subseteq B^\omega \}.\]

    For any \(\alpha < \kappa\), if \([s_\alpha, A_\alpha] \subseteq B^\omega\), then it must be the case that
    \(A_\alpha \subseteq B\).
    Indeed, let \(f \in \omega^\omega\) be any function which surjects onto \(A_\alpha\) that agrees with
    \(s_\alpha\).
    Then, \(A_\alpha = \mathrm{img}(f) \subseteq B\).
    It is also immediate that, if \(A_\alpha \subseteq B\), then \([s_\alpha, A_\alpha] \subseteq B^\omega\).
    Hence,
    \[\bigcup \{ [s_\alpha, A_\alpha] : \alpha < \kappa \wedge [s_\alpha, A_\alpha] \subseteq B^\omega \}
    = \bigcup \{ [s_\alpha, A_\alpha] : \alpha < \kappa \wedge A_\alpha \subseteq B \}.\]
    
    For each \(\alpha < \kappa\) for which \(A_\alpha \subseteq B\), \(A_\alpha\subsetneq B\) since \(B \neq A_\alpha\).
    So let \(x_\alpha \in B \setminus A_\alpha\).
    Note that \[E := \{ x_\alpha : \alpha < \kappa \wedge A_\alpha \subseteq B \} \subseteq \omega,\]
    so there is a surjection \(f : \omega \to E\).
    Note that \(f \in B^\omega\) but \(f \not\in [s_\alpha, A_\alpha]\) for any of the \(\alpha < \kappa\) for which
    \(A_\alpha \subseteq B\).

    \ref{VBaireSpread}:
    Let \(\mathcal A \subseteq [\omega]^\omega\) be such that, for every \(\{ A, B \} \in [\mathcal A]^2\),
    \(A \setminus B \neq \emptyset\) and \(B \setminus A \neq \emptyset\).
    Note that any almost disjoint family satisfies the required condition.
    For each \(A \in \mathcal A\), let \(f_A \in \omega^\omega\) be such that \(\mathrm{img}(f_A) = A\).
    Note that \(f_A \in [\emptyset, A]\).
    Also note that \(D := \{ f_A : A \in \mathcal A \}\) is relatively discrete since
    \[[\emptyset, A] \cap D = \{f_A\}.\]
    Since it is known that an almost disjoint family of size \(\mathfrak c\) exists, we can thus
    conclude that \[s(\mathsf V(\omega^\omega)) = \mathfrak c.\]

    \ref{VBaireExtent}:
    A closed and discrete set of cardinality \(\mathfrak c\) is produced in
    \cite[Example 2.4]{PinchedCube}.

    \ref{VBaireNotGO}:
    By \cite[Prop. 2.10(a)]{LutzerOrder}, every separable GO-space is hereditarily separable.
    Since \(\mathsf V(\omega^\omega)\) has uncountable spread by \ref{VBaireSpread}, it is not hereditarily separable.
    Hence, since \(\mathsf V(\omega^\omega)\) is separable, it follows that \(\mathsf V(\omega^\omega)\) is not a GO-space.

    \ref{VBaireNotMenger}:
    Consider, for each \(n \in \omega\), the cover \[\mathscr U_n = \left\{ [ s , \omega ] : s \in \omega^{n+1} \right\}.\]
    Then consider, for every \(n \in \omega\), a collection
    \[\mathcal F_n := \{ [s_{n,1} , \omega] , \ldots , [s_{n,m_n}, \omega] \} \in \left[ \mathscr U_n \right]^{<\aleph_0}.\]
    For \(k \in \omega\), suppose we've defined
    \(\langle x_j : j < k \rangle\).
    Since \[A_k := \bigcup_{j \leq k} \bigcup_{\ell=1}^{m_j} \mathrm{img}(s_{j,\ell})\]
    is finite, we can let \(x_k \in \omega \setminus A_k\).
    This defines \(\langle x_j : j \leq k \rangle\).
    
    Note that \[\langle x_k : k \in \omega \rangle \not\in \bigcup_{n\in\omega} \mathcal F_n.\]
    Conclusively, \(\mathsf V(\omega^\omega)\) is not Menger.

    \ref{VBaireSquare}:
    Let \(\beta : \omega \to \omega^2\) be a bijection and let \(\cdot_j : \omega^2 \to \omega\),
    \(j = 1,2\), be the standard coordinate projection map;
    that is, \(\beta(n) = \langle \beta(n)_1, \beta(n)_2 \rangle\).
    Then, for \(j=1,2\), let \(\phi_j : \omega^\omega \to \omega^\omega\)
    be defined by \(\phi_j(f)(n) = \beta(f(n))_j\).
    Then we define \(\Phi : \mathsf V(\omega^\omega) \to \mathsf V(\omega^\omega)^2\) by the rule
    \(\Phi(f) = \langle \phi_1(f) , \phi_2(f) \rangle\).
    A routine argument shows that \(\Phi\) is a bijection.

    We know establish that \(\Phi\) is continuous.
    So suppose \(f \in \Phi^{-1}[[s_1, A_1] \times [s_2,A_2]]\) where \(s_1, s_2 \in \omega^{<\omega}\) and \(A_1,A_2 \subseteq \omega\).
    Let \(m = \max \{ \mathrm{len}(s_1), \mathrm{len}(s_2) \}\).
    Then, it can be shown that \[f \in [f|_m , \beta^{-1}[A_1 \times A_2]] \subseteq \Phi^{-1}[[s_1, A_1] \times [s_2,A_2]].\]
    This establishes that \(\Phi\) is continuous.

    We see that \(\mathsf V(\omega^\omega)\) is completely regular as a zero-dimensional Hausdorff space and that \(\mathsf V(\omega^\omega)\)
    is not \(\sigma\)-compact as it is not Menger.
    Finally, since the weight and density for metrizable spaces agree (see \cite[Theorem 8.1]{Hodel}), and we have that
    \(d(\mathsf V(\omega^\omega)) = \omega \neq \mathfrak c = w(\mathsf V(\omega^\omega))\),
    \(\mathsf V(\omega^\omega)\) is not metrizable.
\end{proof}

Given the properties outlined above, there are three topological spaces in the \(\pi\)-base \cite{PiBase} that naturally compare to
\(\mathsf V(\omega^\omega)\) or \(\mathsf V(\omega^\omega)^\prime\), where \(\mathsf V(\omega^\omega)^\prime\) denotes \(\mathsf V(\omega^\omega)\)
with its isolated points removed:
the Baire space (\cite[\href{https://topology.pi-base.org/spaces/S000028}{S28}]{PiBase}, \cite[Space 31]{Counterexamples}),
the Sorgenfrey line (\cite[\href{https://topology.pi-base.org/spaces/S000043}{S43}]{PiBase}, \cite[Space 51]{Counterexamples}),
and the rational sequence topology (\cite[\href{https://topology.pi-base.org/spaces/S000057}{S57}]{PiBase}, \cite[Space 65]{Counterexamples}).
In the following, we demonstrate that
neither \(\mathsf V(\omega^\omega)\) nor \(\mathsf V(\omega^\omega)^\prime\) are homeomorphic to any of these spaces.

It is evident that every constant function in \(\mathsf V(\omega^\omega)\) is isolated.
Moreover, by Proposition \ref{prop:NonDiscrete}, the only isolated points of \(\mathsf V(\omega^\omega)\) are the constant functions.
So \(\mathsf V(\omega^\omega)^\prime\) is the subspace of \(\mathsf V(\omega^\omega)\)
consisting of non-constant functions.
Since \(\mathsf V(\omega^\omega)^\prime\) is a clopen subspace of \(\mathsf V(\omega^\omega)\) with a countable complement, we see that
\(w(\mathsf V(\omega^\omega)^\prime) = w(\mathsf V(\omega^\omega)) = \mathfrak c\),
and so \(\mathsf V(\omega^\omega)^\prime\) is not homeormorphic to the Baire space \(\omega^\omega\).

Recall that the Sorgenfrey line is separable, first-countable, zero-dimensional, Baire, of weight \(\mathfrak c\), and not Menger
(see \cite[Space 51]{Counterexamples} and \cite[Lemma 17]{CombinatoricsOfOpenCoversIX}).
Again, since \(\mathsf V(\omega^\omega)\) has isolated points and the Sorgenfrey line doesn't, these two spaces clearly cannot be homeomorphic.
Even more, \(\mathsf V(\omega^\omega)^\prime\) is not homeomorphic to the Sorgenfrey line.
One of the reasons for this, which we record here, relates to compact subsets.
Indeed, it is known that the Sorgenfrey line contains no uncountable
compact sets (see \cite[Space 51]{Counterexamples}).
Since \([\![ \langle 0, 1 \rangle ]\!] \subseteq \mathsf V(\omega^\omega)^\prime\) is an uncountable compact subset by
Lemma \ref{lem:BasicCompactness},
we see that \(\mathsf V(\omega^\omega)^\prime\) and the Sorgenfrey line cannot be homeomorphic.

A rational sequence topology is defined as follows. For each irrational \(x \in \mathbb R\), fix a sequence \(\langle x_i : i \in \omega \rangle\)
of rational numbers \(x_i \to x\).
The corresponding rational sequence topology on \(\mathbb R\) is defined by declaring each rational point open and letting
\(U_n(x) = \{x_i : i > n\} \cup \{x\}\) be a local basis at each irrational \(x\).
The rational sequence topology is scattered (see \cite[Space 65]{Counterexamples}), but neither \(\mathsf V(\omega^\omega)\) nor
\(\mathsf V(\omega^\omega)^\prime\) are scattered by Proposition \ref{prop:NonDiscrete}.
Therefore, any rational sequence topology on \(\mathbb R\) is not homeomorphic to either \(\mathsf V(\omega^\omega)\)
or \(\mathsf V(\omega^\omega)^\prime\).

In Table \ref{table:PropertyTables}, we summarize some of the properties discussed herein
where \(\mathsf b(\omega^\omega)\) denotes \(\omega^\omega\) endowed with the box topology, \(\mathbb R_\ell\) denotes the Sorgenfrey line,
and \(\mathbb R_{\mathrm{rs}}\) denotes a rational sequence topology.
Note that \(\mathsf b(\omega^\omega)\) is homeomorphically equivalent to \(D(\mathfrak c)\).
Of course, \ding{51} indicates that the indicated space satisfies the indicated property and \ding{55} indicates
that the indicated space doesn't satisfy the indicated property.
Note that the asserted properties for \(\mathsf V(\omega^\omega)^\prime\) follow from Theorem \ref{thm:BairePropertyList},
along with the fact that \(\mathsf V(\omega^\omega)^\prime\) is a clopen subspace of \(\mathsf V(\omega^\omega)\) with
a countable relatively discrete complement.
For the assertion or refutation of particular properties for particular spaces not addressed explicitly herein,
we refer the reader to \cite{PiBase}, along with the relevant references therein, and \cite{Counterexamples}.
\begin{table}[h]
    \begin{tabular}{l|>{\centering\arraybackslash}p{3em}|>{\centering\arraybackslash}p{3em}|>{\centering\arraybackslash}p{3em}|>{\centering\arraybackslash}p{3em}|>{\centering\arraybackslash}p{3em}|>{\centering\arraybackslash}p{3em}}
        & \(\mathsf b(\omega^\omega)\) & \(\omega^\omega\) &
        \(\mathsf V(\omega^\omega)\) & \(\mathsf V(\omega^\omega)^\prime\)
        & \(\mathbb R_\ell\) & \(\mathbb R_{\mathrm{rs}}\)\\
        \hline
        \hline
        first-countable     & \ding{51} & \ding{51} & \ding{51} & \ding{51} & \ding{51} & \ding{51}\\
        \hline
        second-countable    & \ding{55} & \ding{51} & \ding{55} & \ding{55} & \ding{55} & \ding{55}\\
        \hline
        separable           & \ding{55} & \ding{51} & \ding{51} & \ding{51} & \ding{51} & \ding{51}\\
        \hline
        countable spread    & \ding{55} & \ding{51} & \ding{55} & \ding{55} & \ding{51} & \ding{55}\\
        \hline
        zero-dimensional    & \ding{51} & \ding{51} & \ding{51} & \ding{51} & \ding{51} & \ding{51}\\
        \hline
        Baire               & \ding{51} & \ding{51} & \ding{51} & \ding{51} & \ding{51} & \ding{51}\\
        \hline
        crowded             & \ding{55} & \ding{51} & \ding{55} & \ding{51} & \ding{51} & \ding{55}\\
        \hline
        scattered           & \ding{51} & \ding{55} & \ding{55} & \ding{55} & \ding{55} & \ding{51}\\
        \hline
        GO-space            & \ding{51} & \ding{51} & \ding{55} & \ding{55} & \ding{51} & \ding{55}\\
        \hline
        homogeneous          & \ding{51} & \ding{51} & \ding{55} & \ding{55} & \ding{51} & \ding{55}\\
        \hline
        locally compact     & \ding{51} & \ding{55} & \ding{55} & \ding{55} & \ding{55} & \ding{51}\\
        \hline
        Menger              & \ding{55} & \ding{55} & \ding{55} & \ding{55} & \ding{55} & \ding{55}\\
        \hline
        Lindel\"{o}f        & \ding{55} & \ding{51} & \ding{55} & \ding{55} & \ding{51} & \ding{55}\\
        \hline
        normal              & \ding{51} & \ding{51} & \ding{55} & \ding{55} & \ding{51} & \ding{55}\\
        \hline
        countable extent    & \ding{55} & \ding{51} & \ding{55} & \ding{55} & \ding{51} & \ding{55}\\
        \hline
    \end{tabular}
    \caption{Comparison of particular topologies on the continuum.}
    \label{table:PropertyTables}
\end{table}

\subsection{Commentary on Covering Games}

We present a particular version of Corollary 2.17 from \cite{CHVietoris} here for simplicity and immediate relevance.
In particular, we remove the generality where the translation functions \(\overleftarrow{\mathrm{T}}_{\mathrm{I}}\) and
\(\overrightarrow{\mathrm{T}}_{\mathrm{II}}\) can vary depending on the inning \(n \in \omega\).
\begin{theorem}[{\cite[Cor. 2.17]{CHVietoris}}] \label{TranslationTheorem}
    Let \(\mathcal A\), \(\mathcal B\), \(\mathcal C\), and \(\mathcal D\) be collections.
    Suppose there are functions
    \begin{itemize}
        \item \(\overleftarrow{\mathrm{T}}_{\mathrm{I}} :\mathcal B \to \mathcal A\) and
        \item \(\overrightarrow{\mathrm{T}}_{\mathrm{II}} : \left(\bigcup \mathcal A \right) \times \mathcal B \to \bigcup \mathcal B\)
    \end{itemize}
    such that the following two properties hold:
    \begin{enumerate}[label=(T\arabic*)]
        \item \label{translationPropI}
        If \(x \in \overleftarrow{\mathrm{T}}_{\mathrm{I}}(B)\), then
        \(\overrightarrow{\mathrm{T}}_{\mathrm{II}}(x,B) \in B\).
        \item \label{translationPropII}
        If \(\mathcal F_n \in \left[\overleftarrow{\mathrm{T}}_{\mathrm{I}}(B_n)\right]^{<\aleph_0}\) for every \(n \in \omega\) and
        \(\bigcup_{n \in \omega} \mathcal F_n \in \mathcal C\), then
        \[\bigcup_{n \in \omega} \left\{ \overrightarrow{\mathrm{T}}_{\mathrm{II}}(x,B_n) : x \in \mathcal F_n \right\} \in \mathcal D.\]
    \end{enumerate}
    Then, for \(\ast \in \{1,\mathrm{fin}\}\), \(\mathsf G_\ast(\mathcal A,\mathcal C) \leq^+_{\mathrm{II}} \mathsf G_\ast(\mathcal B, \mathcal D)\).
\end{theorem}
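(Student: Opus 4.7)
The plan is to establish all five clauses of $\leq^+_{\mathrm{II}}$ by a single uniform simulation: if $\mathrm{I}$ plays $B_n$ in $\mathsf G_\ast(\mathcal B, \mathcal D)$, view this as $\mathrm{I}$ playing $A_n := \overleftarrow{\mathrm{T}}_{\mathrm{I}}(B_n) \in \mathcal A$ in a companion run of $\mathsf G_\ast(\mathcal A, \mathcal C)$; any $\mathrm{II}$-selection $x_n \in A_n$ in the companion game translates to $\overrightarrow{\mathrm{T}}_{\mathrm{II}}(x_n, B_n) \in B_n$, which is a legal $\mathrm{II}$-move in the original by (T1). In the finite-selection case, a finite $\mathcal F_n \subseteq A_n$ translates pointwise to $\{\overrightarrow{\mathrm{T}}_{\mathrm{II}}(x, B_n) : x \in \mathcal F_n\} \subseteq B_n$, and (T2) ensures that if the companion selections unite to a member of $\mathcal C$, the translated selections unite to a member of $\mathcal D$.

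For the two $\mathrm{II}$-preservations, given a winning (respectively Markov winning) strategy $\sigma$ for $\mathrm{II}$ in $\mathsf G_\ast(\mathcal A, \mathcal C)$, I define $\sigma'$ for $\mathrm{II}$ in $\mathsf G_\ast(\mathcal B, \mathcal D)$ by running $\sigma$ on the simulated history $(A_0, x_0, \ldots, A_n)$ and then applying $\overrightarrow{\mathrm{T}}_{\mathrm{II}}(\cdot, B_n)$ to $\sigma$'s reply. Legality follows from (T1) and the winning condition from (T2). A Markov $\sigma$ yields a Markov $\sigma'$ because $\sigma'$'s round-$n$ response depends only on $n$ and $B_n$ through $A_n = \overleftarrow{\mathrm{T}}_{\mathrm{I}}(B_n)$.

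For the three $\mathrm{I}$-preservations I argue contrapositively. Given any (full, predetermined, or constant) winning strategy $\tau$ for $\mathrm{I}$ in $\mathsf G_\ast(\mathcal B, \mathcal D)$, build $\tau'$ for $\mathrm{I}$ in $\mathsf G_\ast(\mathcal A, \mathcal C)$ by simulating a parallel $\mathcal H$-game in which $\mathrm{II}$'s synthesized moves are $y_n = \overrightarrow{\mathrm{T}}_{\mathrm{II}}(x_n, B_n)$, with each $B_n$ returned by $\tau$ on the simulated history, and let $\tau'$ play $A_n = \overleftarrow{\mathrm{T}}_{\mathrm{I}}(B_n)$. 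If any $\mathrm{II}$-play defeats $\tau'$ in $\mathsf G_\ast(\mathcal A, \mathcal C)$, then (T1) shows its translation is a legal play against $\tau$ and (T2) shows it is winning, contradicting $\tau$'s winningness. A predetermined $\tau$ (which ignores $\mathrm{II}$'s moves) produces a predetermined $\tau'$, and a constant $\tau$ with value $B$ produces a constant $\tau'$ with value $\overleftarrow{\mathrm{T}}_{\mathrm{I}}(B)$.

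The main obstacle is bookkeeping: one must maintain the parallel $\mathcal H$-history alongside the $\mathcal G$-play so that $\overrightarrow{\mathrm{T}}_{\mathrm{II}}(x_k, B_k)$ always refers to the correct $B_k$, and verify that the round-$n$-only dependence of Markov strategies, the history-free dependence of predetermined strategies, and the round-independence of constant strategies are all preserved under the translation. Once the simulation is arranged, (T1) and (T2) supply legality and winning respectively, uniformly across all five clauses and for both $\ast \in \{1, \mathrm{fin}\}$.
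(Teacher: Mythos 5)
The paper does not prove this statement itself---it is quoted from \cite[Cor.\ 2.17]{CHVietoris} and used as a black box---so there is no in-paper proof to compare against. Your argument is correct and is the standard translation/simulation proof one would expect (and that the cited source uses): pull back P1's moves via \(\overleftarrow{\mathrm{T}}_{\mathrm{I}}\), push forward P2's selections via \(\overrightarrow{\mathrm{T}}_{\mathrm{II}}\), with (T1) giving legality and (T2) giving the winning condition, and with the Markov, predetermined, and constant clauses following because \(A_n\) depends only on \(B_n\).
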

Note that, in view of properties \ref{translationPropI} and \ref{translationPropII}, \(\overrightarrow{\mathrm{T}}_{\mathrm{II}}\) need not
have full domain in the first coordinate.
That is, for \(B \in \mathcal B\), we will only concern ourselves with defining
\(\overrightarrow{\mathrm{T}}_{\mathrm{II}}(x,B)\) for each \(x \in \overleftarrow{\mathrm{T}}_{\mathrm{I}}(B)\).

\begin{proposition} \label{prop:ContinuousImage}
    If \(Y\) is a continuous image of \(X\), then,
    for \(\ast \in \{1, \mathrm{fin} \}\),
    \[\mathsf G_\ast(\mathcal O_X,\mathcal O_X) \leq_{\mathrm{II}} \mathsf G_\ast(\mathcal O_Y, \mathcal O_Y)\]
    and
    \[\mathsf G_\ast(\Omega_X,\Omega_X) \leq_{\mathrm{II}} \mathsf G_\ast(\Omega_Y, \Omega_Y).\]
\end{proposition}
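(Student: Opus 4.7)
The plan is to invoke Theorem \ref{TranslationTheorem} using pullback along a continuous surjection $q : X \to Y$. For the first inequality, take $\mathcal A = \mathcal C = \mathcal O_X$ and $\mathcal B = \mathcal D = \mathcal O_Y$; for the second, replace these with $\Omega_X$ and $\Omega_Y$. The four implications defining $\leq_{\mathrm{II}}$ then all follow from a single construction of $\overleftarrow{\mathrm{T}}_{\mathrm{I}}$ and $\overrightarrow{\mathrm{T}}_{\mathrm{II}}$ that does double duty for $\ast \in \{1, \mathrm{fin}\}$.

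For a cover $\mathscr V$ of $Y$, define
\[\overleftarrow{\mathrm{T}}_{\mathrm{I}}(\mathscr V) = \{q^{-1}(V) : V \in \mathscr V\},\]
which is an open cover of $X$ by continuity (since $q^{-1}(Y) = X$). When $\mathscr V$ is an $\omega$-cover of $Y$, any finite $G \subseteq X$ has $q(G)$ finite in $Y$, so some $V \in \mathscr V$ satisfies $q(G) \subseteq V$ and hence $G \subseteq q^{-1}(V)$; thus $\overleftarrow{\mathrm{T}}_{\mathrm{I}}$ sends $\Omega_Y$ into $\Omega_X$. Using the Axiom of Choice, for each $U \in \overleftarrow{\mathrm{T}}_{\mathrm{I}}(\mathscr V)$ select some $V \in \mathscr V$ with $q^{-1}(V) = U$ and set $\overrightarrow{\mathrm{T}}_{\mathrm{II}}(U, \mathscr V) = V$. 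Property \ref{translationPropI} is immediate.

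For property \ref{translationPropII}, suppose finite sets $\mathcal F_n \subseteq \overleftarrow{\mathrm{T}}_{\mathrm{I}}(\mathscr V_n)$ satisfy $\bigcup_n \mathcal F_n \in \mathcal O_X$ (resp.\ $\Omega_X$), and write $\mathscr W_n = \{\overrightarrow{\mathrm{T}}_{\mathrm{II}}(U, \mathscr V_n) : U \in \mathcal F_n\}$. In the open-cover case, any $y \in Y$ admits $x \in q^{-1}(y)$ by surjectivity; some $U \in \mathcal F_n$ contains $x$, whence $y = q(x) \in \overrightarrow{\mathrm{T}}_{\mathrm{II}}(U, \mathscr V_n)$, using the identity $q(q^{-1}(V)) = V$ valid for surjective $q$. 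In the $\omega$-cover case, lift any finite $F \subseteq Y$ to a finite $G \subseteq X$ mapping onto $F$, find a single $U \in \bigcup_n \mathcal F_n$ containing $G$, and then $F \subseteq \overrightarrow{\mathrm{T}}_{\mathrm{II}}(U, \mathscr V_n)$.

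I do not expect a genuine obstacle: the only non-trivial bookkeeping is that the selection of $V$ from $U = q^{-1}(V)$ is non-canonical and invokes Choice, but once made, every verification reduces to continuity, surjectivity, and routine set-theoretic manipulation. Theorem \ref{TranslationTheorem} then yields the stronger conclusion $\leq^+_{\mathrm{II}}$, which in particular implies the stated $\leq_{\mathrm{II}}$, uniformly for $\ast \in \{1, \mathrm{fin}\}$.
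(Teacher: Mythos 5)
Your proposal is correct and follows essentially the same route as the paper: pull covers back along the continuous surjection to define $\overleftarrow{\mathrm{T}}_{\mathrm{I}}$, choose a preimage-witness for $\overrightarrow{\mathrm{T}}_{\mathrm{II}}$, verify (T1) and (T2) for both $\mathcal O$ and $\Omega$ by lifting points (resp.\ finite sets), and invoke Theorem \ref{TranslationTheorem}. The paper's argument is the same in all essentials, including the observation that the translation actually yields $\leq^+_{\mathrm{II}}$.
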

\begin{proof}
    Suppose \(f:X \to Y\) is a continuous surjection.
    We start with the assertion for the topological operator \(\mathcal O\).
    For an open cover \(\mathscr U\) of \(Y\), let
    \[\overleftarrow{\mathrm{T}}_{\mathrm{I}}(\mathscr U) = \{ f^{-1}(U) : U \in \mathscr U\}.\]
    Note that \(\overleftarrow{\mathrm{T}}_{\mathrm{I}}(\mathscr U)\) is an open cover of \(X\).
    For \(\mathscr U \in \mathcal O_Y\) and each \(V \in \overleftarrow{\mathrm{T}}_{\mathrm{I}}(\mathscr U)\),
    choose \(\overrightarrow{\mathrm{T}}_{\mathrm{II}}(V, \mathscr U) \in \mathscr U\) to be such that
    \(V = f^{-1}\left(\overrightarrow{\mathrm{T}}_{\mathrm{II}}(V, \mathscr U)\right)\).

    Now suppose we have a sequence \(\langle \mathscr U_n : n \in \omega \rangle\) of open covers of \(Y\)
    and a sequence
    \(\langle \mathscr G_n : n \in \omega \rangle\) with the properties that
    \[\mathscr G_n \in \left[ \overleftarrow{\mathrm{T}}_{\mathrm{I}}(\mathscr U_n)\right]^{<\aleph_0}\]
    for each \(n \in \omega\) and \(\bigcup_{n\in\omega} \mathscr G_n\) is a cover of \(X\).
    We show that \[\bigcup_{n\in\omega}\left\{ \overrightarrow{\mathrm{T}}_{\mathrm{II}}(V, \mathscr U_n) : V \in \mathscr G_n \right\} \in \mathcal O_Y.\]
    For a given \(y \in Y\), let \(x \in f^{-1}(y)\) and choose \(n \in \omega\) and
    \(V\in\mathscr G_n\) such that \(x \in V\).
    It follows that \(y \in \overrightarrow{\mathrm{T}}_{\mathrm{II}}(V, \mathscr U_n)\).

    Conclusively, Theorem \ref{TranslationTheorem} applies to obtain that
    \[\mathsf G_\ast(\mathcal O_X,\mathcal O_X) \leq_{\mathrm{II}} \mathsf G_\ast(\mathcal O_Y, \mathcal O_Y).\]

    Now we address the assertion for the topological operator \(\Omega\).
    For \(\mathscr U \in \Omega_Y\), let, as before,
    \[\overleftarrow{\mathrm{T}}_{\mathrm{I}}(\mathscr U) = \{ f^{-1}(U) : U \in \mathscr U\}.\]
    To see that \(\overleftarrow{\mathrm{T}}_{\mathrm{I}}(\mathscr U) \in \Omega_X\), let
    \(F \in [X]^{<\aleph_0}\) and note that \(f[F] \in [Y]^{<\aleph_0}\).
    Then there is \(U \in \mathscr U\) such that \(f[F] \subseteq U\).
    It follows that \(F \subseteq f^{-1}(U)\).
    Hence, \(\overleftarrow{\mathrm{T}}_{\mathrm{I}}(\mathscr U) \in \Omega_X\).

    For \(\mathscr U \in \Omega_Y\) and each \(V \in \overleftarrow{\mathrm{T}}_{\mathrm{I}}(\mathscr U)\),
    choose \(\overrightarrow{\mathrm{T}}_{\mathrm{II}}(V, \mathscr U) \in \mathscr U\) to be such that
    \(V = f^{-1}\left(\overrightarrow{\mathrm{T}}_{\mathrm{II}}(V, \mathscr U)\right)\).

    Now suppose we have a sequence \(\langle \mathscr U_n : n \in \omega \rangle\) of \(\omega\)-covers of \(Y\)
    and a sequence
    \(\langle \mathscr G_n : n \in \omega \rangle\) with the properties that
    \[\mathscr G_n \in \left[ \overleftarrow{\mathrm{T}}_{\mathrm{I}}(\mathscr U_n)\right]^{<\aleph_0}\]
    for each \(n \in \omega\) and \(\bigcup_{n\in\omega} \mathscr G_n \in \Omega_X\).
    We show that \[\bigcup_{n\in\omega}\left\{ \overrightarrow{\mathrm{T}}_{\mathrm{II}}(V, \mathscr U_n) : V \in \mathscr G_n \right\} \in \Omega_Y.\]
    For a given \(F \in [Y]^{<\aleph_0}\), choose \(x_y \in f^{-1}(y)\) for each \(y \in F\).
    Since \(\{ x_y : y \in F \} \in [X]^{<\aleph_0}\), there is some \(n \in \omega\) and
    \(V \in \mathscr G_n\) with \(\{ x_y : y \in F \} \subseteq V\).
    It follows that \(F \subseteq \overrightarrow{\mathrm{T}}_{\mathrm{II}}(V, \mathscr U_n)\).

    Conclusively, Theorem \ref{TranslationTheorem} applies to obtain that
    \[\mathsf G_\ast(\Omega_X,\Omega_X) \leq_{\mathrm{II}} \mathsf G_\ast(\Omega_Y, \Omega_Y).\]
    This finishes the proof.
\end{proof}

\begin{definition}
    Let \(X\) and \(Y\) be topological spaces. Then a continuous \(f:X \to Y\) is
    \begin{itemize}
        \item \emph{proper} if whenever \(L \subseteq Y\) is compact, \(f^{-1}[L]\) is compact, and
        \item a \emph{compact covering map} if whenever \(L \subseteq Y\) is compact, there is a compact \(K \subseteq X\) so that \(f[K] = L\).
    \end{itemize}
    If there is a compact covering map from \(X\) to \(Y\), we say that \emph{\(X\) is a compact covering of \(Y\)}.
\end{definition}
\begin{remark}
    Note that every proper map is a compact covering map.
\end{remark}

\begin{proposition} \label{prop:ProperImage}
    If \(X\) is a compact covering of \(Y\), then
    \[\mathsf G_\ast(\mathcal K_X,\mathcal K_X) \leq_{\mathrm{II}} \mathsf G_\ast(\mathcal K_Y, \mathcal K_Y),\]
    for \(\ast \in \{ 1, \mathrm{fin} \}\).
\end{proposition}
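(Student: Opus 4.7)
The plan is to apply Theorem \ref{TranslationTheorem} with $\mathcal{A} = \mathcal{C} = \mathcal{K}_X$ and $\mathcal{B} = \mathcal{D} = \mathcal{K}_Y$, following closely the structure of the proof of Proposition \ref{prop:ContinuousImage}, but with the topological operator $\Omega$ replaced by $\mathcal{K}$ and with the compact covering hypothesis doing the work that surjectivity did before.

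Fix a compact covering map $f : X \to Y$. For each $\mathscr{U} \in \mathcal{K}_Y$, I would set
\[\overleftarrow{\mathrm{T}}_{\mathrm{I}}(\mathscr{U}) = \{ f^{-1}(U) : U \in \mathscr{U} \},\]
and for each $V \in \overleftarrow{\mathrm{T}}_{\mathrm{I}}(\mathscr{U})$ choose $\overrightarrow{\mathrm{T}}_{\mathrm{II}}(V,\mathscr{U}) \in \mathscr{U}$ with $V = f^{-1}\!\left(\overrightarrow{\mathrm{T}}_{\mathrm{II}}(V,\mathscr{U})\right)$. The first thing to verify is that $\overleftarrow{\mathrm{T}}_{\mathrm{I}}(\mathscr{U})$ really is a $k$-cover of $X$: given compact $K \subseteq X$, continuity of $f$ gives $f[K]$ compact in $Y$, so some $U \in \mathscr{U}$ satisfies $f[K] \subseteq U$, whence $K \subseteq f^{-1}(U)$. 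Condition \ref{translationPropI} is then immediate from the choice of $\overrightarrow{\mathrm{T}}_{\mathrm{II}}$.

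The only nonroutine step is verifying condition \ref{translationPropII}. Suppose $\langle \mathscr{U}_n : n \in \omega \rangle$ is a sequence in $\mathcal{K}_Y$ and $\mathscr{G}_n \in \left[\overleftarrow{\mathrm{T}}_{\mathrm{I}}(\mathscr{U}_n)\right]^{<\aleph_0}$ for each $n$, with $\bigcup_{n \in \omega} \mathscr{G}_n \in \mathcal{K}_X$. To show that $\bigcup_{n \in \omega} \{ \overrightarrow{\mathrm{T}}_{\mathrm{II}}(V,\mathscr{U}_n) : V \in \mathscr{G}_n \}$ is a $k$-cover of $Y$, take an arbitrary compact $L \subseteq Y$; by the compact covering hypothesis, there is a compact $K \subseteq X$ with $f[K] = L$. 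Then some $n \in \omega$ and $V \in \mathscr{G}_n$ have $K \subseteq V$, so
\[ L = f[K] \subseteq f[V] = f\!\left[f^{-1}\!\left(\overrightarrow{\mathrm{T}}_{\mathrm{II}}(V,\mathscr{U}_n)\right)\right] \subseteq \overrightarrow{\mathrm{T}}_{\mathrm{II}}(V,\mathscr{U}_n). \]
Theorem \ref{TranslationTheorem} then yields the desired ordering for both $\ast \in \{1,\mathrm{fin}\}$.

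The main obstacle — and the reason the hypothesis of compact covering is needed rather than mere continuous surjectivity — is precisely this last step. The pointwise lifting used in the $\Omega$ case of Proposition \ref{prop:ContinuousImage} (choosing $x_y \in f^{-1}(y)$ for each $y$ in the given finite set) does not generalize to compact $L \subseteq Y$, since the resulting set of preimage points need not be compact in $X$. The compact covering property is exactly what is required to produce a genuine compact witness $K \subseteq X$ lifting $L$, and without it the argument breaks down.
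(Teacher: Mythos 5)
Your proposal is correct and follows essentially the same route as the paper's proof: the identical translation maps $\overleftarrow{\mathrm{T}}_{\mathrm{I}}(\mathscr U) = \{f^{-1}(U) : U \in \mathscr U\}$ and the choice of $\overrightarrow{\mathrm{T}}_{\mathrm{II}}$, the same verification that preimages of a $k$-cover form a $k$-cover, and the same use of the compact covering hypothesis to lift a compact $L \subseteq Y$ to a compact $K \subseteq X$ before invoking Theorem \ref{TranslationTheorem}. No gaps.
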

\begin{proof}
    Suppose \(f:X \to Y\) is a compact covering map.
    For a \(k\)-cover \(\mathscr U\) of \(Y\), let
    \(\overleftarrow{\mathrm{T}}_{\mathrm{I}}(\mathscr U) = \{ f^{-1}(U) : U \in \mathscr U\}\).
    We claim that \(\overleftarrow{\mathrm{T}}_{\mathrm{I}}(\mathscr U)\) is a \(k\)-cover of \(X\).
    Suppose \(K \subseteq X\) is compact.
    Then \(f[K]\) is compact, so there is some \(U \in \mathscr U\) such that \(f[K] \subseteq U\).
    It follows that \(K \subseteq f^{-1}(U)\).
    
    Now, for \(\mathscr U \in \mathcal K_Y\) and each \(V \in \overleftarrow{\mathrm{T}}_{\mathrm{I}}(\mathscr U)\),
    choose \(\overrightarrow{\mathrm{T}}_{\mathrm{II}}(V, \mathscr U) \in \mathscr U\) to be such that
    \(V = f^{-1}\left(\overrightarrow{\mathrm{T}}_{\mathrm{II}}(V, \mathscr U)\right)\).

    Suppose we have a sequence \(\langle \mathscr U_n : n \in \omega \rangle\) of \(k\)-covers of \(Y\)
    and a sequence
    \(\langle \mathscr G_n : n \in \omega \rangle\) with the properties that
    \[\mathscr G_n \in \left[ \overleftarrow{\mathrm{T}}_{\mathrm{I}}(\mathscr U_n)\right]^{<\aleph_0}\]
    for each \(n \in \omega\) and \(\bigcup_{n\in\omega} \mathscr G_n\) is a \(k\)-cover of \(X\).
    We show that \[\bigcup_{n\in\omega}\left\{ \overrightarrow{\mathrm{T}}_{\mathrm{II}}(V, \mathscr U_n) : V \in \mathscr G_n \right\} \in \mathcal K_Y.\]
    So let \(L \subseteq Y\) be compact.
    Since \(f\) is a compact covering map, there is a compact \(K \subseteq X\) such that \(f[K] = L\).
    Let \(n \in \omega\) and \(V \in \mathscr G_n\) be such that \(K \subseteq V\).
    It follows that \(L \subseteq \overrightarrow{\mathrm{T}}_{\mathrm{II}}(V, \mathscr U_n)\).

    Conclusively, Theorem \ref{TranslationTheorem} applies.
\end{proof}
In the context of Proposition \ref{prop:ProperImage}, some hypotheses are necessary to guarantee the conclusion.
\begin{example}
    Note that \(\omega\) is a \(k\)-Rothberger space but the rationals \(\mathbb Q\) are not \(k\)-Menger
    (see \cite[Example 5.4 (3)]{TraditionalMenger}).
    Since any enumeration \(\omega \to \mathbb Q\) is a continuous surjection, we see that
    the conclusion of Proposition \ref{prop:ProperImage} does not hold for arbitrary continuous surjections.
\end{example}
We provide here a sufficient condition for a map to be a compact covering map.
\begin{proposition} \label{prop:OpenSurjection}
    Suppose \(f : X \to Y\) has the property that there exists a continuous \(g : Y \to X\) such that
    \(g(y) \in f^{-1}(y)\) for each \(y \in Y\).
    Then, for any compact \(L \subseteq Y\), there exists a compact \(K \subseteq X\) such that \(f[K] = L\).
    Note that any open surjection has the desired property.
    Hence, if \(f\) is a continuous open surjection, then \(f\) is a compact covering map.
\end{proposition}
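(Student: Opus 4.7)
The claim splits into two parts. For the first, assume a continuous $g : Y \to X$ with $g(y) \in f^{-1}(y)$ for every $y$, equivalently $f \circ g = \mathrm{id}_Y$. Given a compact $L \subseteq Y$, simply take $K = g[L]$; then $K$ is compact as the continuous image of a compact set, and
\[
f[K] = f[g[L]] = (f \circ g)[L] = L.
\]
This part reduces to a one-line calculation once the section $g$ is in hand.

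For the second claim --- that any continuous open surjection $f$ has the sectional property stated in the hypothesis --- the plan is to select $g(y) \in f^{-1}(y)$ pointwise and verify that $g$ is continuous. Given any open $U \subseteq X$, one has $f[U]$ open in $Y$ by openness of $f$, and the selection must be arranged so that $g^{-1}[U] = \{ y \in Y : g(y) \in U \}$ is open in $Y$. Alternatively, one can aim straight for the compact-covering conclusion, bypassing the section: given a compact $L \subseteq Y$, for each $y \in L$ choose $x_y \in f^{-1}(y)$ together with a compact neighborhood $V_y$ of $x_y$ such that $f[V_y]$ is an open neighborhood of $y$; compactness of $L$ yields finitely many $V_{y_1}, \ldots, V_{y_n}$ whose images cover $L$, and then $K := (V_{y_1} \cup \cdots \cup V_{y_n}) \cap f^{-1}[L]$ is compact (using that $L$ is closed when $Y$ is Hausdorff) and satisfies $f[K] = L$.

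The main obstacle lies in this second step. Continuity of a pointwise selection does not follow from openness of $f$ alone, and the direct route above requires at a minimum some local-compactness flavor on $X$ so that the neighborhoods $V_y$ can be taken compact. The statement as written therefore tacitly leans either on a Michael-style selection argument or on structural features (local compactness, zero-dimensionality, metrizability) of the spaces arising in the Vietoris-power setting of this paper. Once either route supplies the section (or directly produces a compact $K$), combining with the first part gives the concluding assertion that every continuous open surjection is a compact covering map.
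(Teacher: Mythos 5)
Your handling of the first assertion is correct and is exactly the paper's argument: with the continuous section \(g\) in hand, \(K = g[L]\) is compact and \(f[K] = (f \circ g)[L] = L\).

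For the second assertion you stopped short of a proof, and you were right to. The paper's entire justification is the sentence that, when \(f\) is an open surjection, ``any choice function \(g : Y \to X\) with \(g(y) \in f^{-1}(y)\) for each \(y \in Y\) is continuous'' --- precisely the step you declined to take on faith --- and that sentence is false as stated. The projection \(\pi : [0,1]^2 \to [0,1]\) is a continuous open surjection, yet the section \(y \mapsto \langle y, \mathbf 1_{\mathbb Q}(y) \rangle\) is not continuous: openness of \(f\) says that \(f\) carries open sets to open sets, but it places no constraint on where an arbitrarily chosen section lands within each fiber. So your diagnosis is accurate, and the gap you identified is a gap in the paper's proof, not merely in your write-up. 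Your fallback route (cover \(L\) by the images of compact neighborhoods \(V_y\) of chosen points \(x_y \in f^{-1}(y)\) and set \(K = (V_{y_1} \cup \cdots \cup V_{y_n}) \cap f^{-1}[L]\)) is sound but needs \(X\) locally compact and \(Y\) Hausdorff, neither of which is hypothesized. The net effect is that the first sentence of the proposition is proved, while the last two sentences are not established by either argument; in the one place the proposition is invoked (Lemma \ref{lem:CompactCovering}, for the map \(f \mapsto \mathrm{img}(f)\), \(\mathbb K(X,\mathrm{ord}) \to \mathbb K(X)\)), one would instead need to exhibit a continuous section of that particular map, or verify its compact-covering property directly.
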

\begin{proof}
    Suppose \(g  : Y \to X\) is a continuous map such that \(g(y) \in f^{-1}(y)\) for each \(y \in Y\).
    Thus, for any compact \(L \subseteq Y\), \(g[L]\) is a compact subset of \(X\) and
    \(f \circ g[L] = L\).
    In the case that \(f : X \to Y\) is an open surjection, any choice function \(g : Y \to X\)
    with \(g(y) \in f^{-1}(y)\) for each \(y\in Y\) is continuous.
\end{proof}

\subsection{Comparing the Topologies of Ordered and Unordered Compact Sets}

\begin{proposition} \label{prop:ImageMappingQuotient}
    The natural mapping \(f \mapsto \mathrm{img}(f)\), \(\mathbb K(X,\mathrm{ord},\kappa) \to \mathbb K(X)\), is continuous.
    When \(\kappa\) is an infinite cardinal, then it is also an open mapping onto its range,
    and thus a quotient mapping onto its range.
\end{proposition}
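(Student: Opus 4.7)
The plan is to prove continuity and openness separately; a continuous open surjection onto its range is automatically a quotient map, so these two pieces together imply the full statement. For continuity, the idea is to take a basic Vietoris-hyperspace neighborhood of \(\mathrm{img}(f)\), find coordinates where \(f\) witnesses each required intersection with a \(U_j\), and translate those coordinates into a Vietoris-power basic neighborhood of \(f\). For openness, when \(\kappa\) is infinite, I will identify the image of a Vietoris-power basic set with a natural Vietoris-hyperspace basic set intersected with the range \(R\) of the image mapping.

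For continuity, fix \(f \in \mathbb K(X,\mathrm{ord},\kappa)\) with \(\mathrm{img}(f) \in [U_1,\ldots,U_n]\). For each \(j\), pick \(\alpha_j \in \kappa\) with \(f(\alpha_j) \in U_j\), let \(\Lambda = \{\alpha_1,\ldots,\alpha_n\}\), let \(V_\alpha = \bigcap\{U_j : \alpha_j = \alpha\}\) for \(\alpha \in \Lambda\), and set \(U = \bigcup_j U_j\). A direct check shows that \(f \in [U;\Lambda,V]\) and that every \(g \in [U;\Lambda,V]\) satisfies \(\mathrm{img}(g) \in [U_1,\ldots,U_n]\). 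This step is essentially bookkeeping.

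For openness, I will argue that
\[\{\mathrm{img}(g) : g \in [U;\Lambda,V]\} \;=\; \bigl[U,\, U \cap V_{\alpha_1},\, \ldots,\, U \cap V_{\alpha_n}\bigr] \cap R,\]
where \(\{\alpha_1,\ldots,\alpha_n\}\) enumerates \(\Lambda\). The forward inclusion is immediate: if \(g \in [U;\Lambda,V]\), then \(\mathrm{img}(g) \subseteq U\) and each \(g(\alpha_j) \in V_{\alpha_j} \cap U\) certifies the required intersection. For the reverse inclusion, given a compact \(K\) in the right-hand side, pick \(x_j \in K \cap V_{\alpha_j}\) for each \(j\); since \(\kappa\) is infinite, \(\#(\kappa \setminus \Lambda) = \kappa \geq \#K\), so there is a surjection \(h : \kappa \setminus \Lambda \to K\), and defining \(g(\alpha_j) = x_j\) together with \(g|_{\kappa \setminus \Lambda} = h\) yields an element of \([U;\Lambda,V]\) with \(\mathrm{img}(g) = K\).

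The main obstacle is precisely the cardinality bookkeeping in this reverse inclusion: if \(\kappa\) were finite with \(\#K = \kappa\), the constraint \(g(\alpha_j) = x_j\) might leave too few free coordinates to surject onto all of \(K\), so the image of \([U;\Lambda,V]\) could be strictly smaller than the candidate hyperspace set. The infiniteness hypothesis on \(\kappa\) is exactly what removes this obstacle. Once both continuity and openness are established, the quotient conclusion follows automatically.
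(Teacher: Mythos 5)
Your proof follows essentially the same route as the paper's: the same coordinate-witness construction for continuity, and the same identification of the image of \([U;\Lambda,V]\) with \([U,\, U\cap V_{\alpha_1},\ldots,U\cap V_{\alpha_n}]\cap R\) for openness, using \(\#K\leq\kappa=\#(\kappa\setminus\Lambda)\) to build the surjection. Your two small deviations are actually tidier than the paper's: intersecting the \(U_j\) sharing a witness coordinate handles the (possible) coincidence \(\alpha_j=\alpha_k\) that the paper glosses over, and surjecting \(\kappa\setminus\Lambda\) onto all of \(K\) rather than onto \(K\setminus\{x_\alpha:\alpha\in\Lambda\}\) avoids the paper's case split when the latter set is empty.
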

\begin{proof}
    Let \(R = \{ \mathrm{img}(f) : f \in \mathbb K(X,\mathrm{ord},\kappa) \}\).
    We start by showing \(f \mapsto \mathrm{img}(f)\), \(\mathbb K(X, \mathrm{ord}, \kappa) \to R\),
    is continuous.
    Consider \([U_1, \ldots, U_n]\) such that \([U_1 , \ldots, U_n] \cap R \neq \emptyset\).
    Then consider \(f \in \mathbb K(X,\mathrm{ord},\kappa)\) with \(\mathrm{img}(f) \in [U_1 , \ldots, U_n]\).
    For each \(j \in \{ 1,\ldots, n \}\), let \(\alpha_j \in \kappa\) be such that \(f(\alpha_j) \in U_j\).
    Then let \(U = \bigcup_{j=1}^n U_j\), \(\Lambda = \{ \alpha_1, \ldots, \alpha_n \}\), and define
    \(V : \Lambda \to \mathscr T_X\) by the rule
    \(V_{\alpha_j} = U_j\).
    Note that \(f \in [U; \Lambda, V]\) and, if \(g \in [U; \Lambda, V] \cap \mathbb K(X,\mathrm{ord}, \kappa)\), then \(\mathrm{img}(g) \in [U_1, \ldots, U_n]\).
    This establishes continuity.
    
    Now suppose \(\kappa\) is infinite.
    To see that \(f \mapsto \mathrm{img}(f)\), \(\mathbb K(X, \mathrm{ord}, \kappa) \to R\), is an open map,
    we show that \[\mathrm{img}^{``}([U; \Lambda, V] \cap \mathbb K(X, \mathrm{ord}, \kappa)) = [W : W \in \mathscr F] \cap R,\]
    where \(\mathscr F = \{ U \} \cup \{ U\cap V_\alpha : \alpha \in \Lambda \}\).
    Evidently, if \(f \in [U; \Lambda, V] \cap \mathbb K(X, \mathrm{ord}, \kappa)\), then \(\mathrm{img}(f) \in [W : W \in \mathscr F] \cap R\).
    Hence, \[\mathrm{img}^{``}([U; \Lambda, V] \cap \mathbb K(X, \mathrm{ord}, \kappa)) \subseteq [W : W \in \mathscr F] \cap R.\]
    So let \(K \in [W : W \in \mathscr F] \cap R\).
    Since \(K \in R\), we know that \(\# K \leq \kappa\).
    For \(\alpha \in \Lambda\), let \(x_\alpha \in K \cap U \cap V_\alpha\).
    If \(K \setminus \{ x_\alpha : \alpha \in \Lambda \} = \emptyset\),
    then we can choose \(y \in K\) and define \(f : \kappa \to X\)
    by the rule
    \[f(\alpha) = \begin{cases} x_\alpha, & \alpha \in \Lambda,\\ y, & \alpha \in \kappa \setminus \Lambda. \end{cases}\]
    Otherwise, let \(f_0 : \kappa \setminus \Lambda \to K \setminus \{ x_\alpha : \alpha \in \Lambda \}\) be a surjection
    and define \(f: \kappa \to X\) by the rule
    \[f(\alpha) = \begin{cases} x_\alpha, & \alpha \in \Lambda,\\ f_0(\alpha), & \alpha \in \kappa \setminus \Lambda. \end{cases}\]
    Note that \(\mathrm{img}(f) = K\) and that \(f \in [U; \Lambda, V]\).
    Hence, \(K \in \mathrm{img}^{``}[U;\Lambda, V]\).
    Thus, \[[W : W \in \mathscr F] \cap R \subseteq \mathrm{img}^{``}[U;\Lambda, V],\]
    finishing the proof.
\end{proof}
The requirement in Proposition \ref{prop:ImageMappingQuotient} that \(\kappa\) be infinite for the referenced mapping
to be open onto its range is, in general, necessary.
\begin{example}
    If \(\kappa\) is finite, then the mapping \(f \mapsto \mathrm{img}(f)\), \(\mathbb K(X, \mathrm{ord}, \kappa) \to \mathbb K(X)\),
    need not be an open mapping onto its range \(R = \mathrm{img}^{``}\mathbb K(X, \mathrm{ord}, \kappa)\).
\end{example}
\begin{proof}
    Let \(p = \langle 0,1 \rangle\) and \(X = \{ p \} \cup \{ \langle x,0 \rangle : x \in \mathbb R \}\) be viewed
    as a subspace of \(\mathbb R^2\).
    Identify each \(\langle x, 0 \rangle\) with \(x\).
    Consider \(\kappa = 4\) and, for \(\alpha \in \{0,1\}\), let \(V_\alpha = \{ p \}\).
    We show that \(\mathrm{img}^{``}[X; \{0,1\}, V]\) is not open relative to \(R\).
    First, note that \(\mathrm{img}^{``}[X; \{0,1\}, V]\) consists of subsets of \(X\) containing \(p\)
    of cardinality \(\leq 3\).
    Now, note that \(\{ 0, p \} \in \mathrm{img}^{``}[X; \{0,1\}, V]\).
    We claim that no open neighborhood of \(\{0,p\}\) relative to \(R\) is a subset of \(\mathrm{img}^{``}[X; \{0,1\}, V]\).
    Indeed, consider any basic open neighborhood \([U_1,\ldots, U_n]\) of \(\{0,p\}\) in \(\mathbb K(X)\).
    Choose \(j \in \{1,\ldots, n\}\) such that \(0 \in U_j\) and then choose
    \[\{x_1,x_2\} \in [U_j \cap \mathbb R \setminus \{0\}]^2.\]
    Conclusively,
    \[\{p,0,x_1,x_2\} \in [U_1, \ldots, U_n] \cap R \setminus \mathrm{img}^{``}[X; \{0,1\}, V],\]
    finishing the proof.
\end{proof}

We now summarize game-theoretic consequences of Propositions \ref{prop:ContinuousImage} and \ref{prop:ProperImage}.
\begin{corollary} \label{cor:FiniteInequality}
    For any space \(X\) and \(\ast \in \{1,\mathrm{fin}\}\),
    \[\mathsf G_\ast(\mathcal O_{\mathbb F(X,\mathrm{ord})},\mathcal O_{\mathbb F(X,\mathrm{ord})})
    \leq_{\mathrm{II}}^+ \mathsf G_\ast(\Omega_X,\Omega_X).\]
\end{corollary}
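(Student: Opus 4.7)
The plan is to invoke Theorem \ref{TranslationTheorem} with $\mathcal A=\mathcal C=\mathcal O_{\mathbb F(X,\mathrm{ord})}$ and $\mathcal B=\mathcal D=\Omega_X$, using the natural ``tube'' construction to shuttle between $\omega$-covers of $X$ and open covers of $\mathbb F(X,\mathrm{ord})$.

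For the first translation, given $\mathscr U\in\Omega_X$, I would set
$$\overleftarrow{\mathrm{T}}_{\mathrm{I}}(\mathscr U)=\bigl\{U^\omega\cap\mathbb F(X,\mathrm{ord}):U\in\mathscr U\bigr\}.$$
Each $U^\omega=[U;\emptyset,\emptyset]$ is open in $\mathsf V(X^\omega)$, so the listed sets are relatively open in $\mathbb F(X,\mathrm{ord})$; they form a cover because any $f\in\mathbb F(X,\mathrm{ord})$ has $\mathrm{img}(f)\in[X]^{<\aleph_0}$, and the $\omega$-cover property of $\mathscr U$ then produces $U\in\mathscr U$ with $f\in U^\omega$. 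For the second translation, for each $V\in\overleftarrow{\mathrm{T}}_{\mathrm{I}}(\mathscr U)$ I would pick some $U_V\in\mathscr U$ with $V=U_V^\omega\cap\mathbb F(X,\mathrm{ord})$ and set $\overrightarrow{\mathrm{T}}_{\mathrm{II}}(V,\mathscr U)=U_V$, making \ref{translationPropI} hold by construction.

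The remaining condition \ref{translationPropII} is the one with content, and it is precisely where the finite-range restriction is used. Suppose $\mathscr U_n\in\Omega_X$ and $\mathcal F_n\in[\overleftarrow{\mathrm{T}}_{\mathrm{I}}(\mathscr U_n)]^{<\aleph_0}$ are such that $\bigcup_n\mathcal F_n$ is an open cover of $\mathbb F(X,\mathrm{ord})$. To show $\bigcup_n\{\overrightarrow{\mathrm{T}}_{\mathrm{II}}(V,\mathscr U_n):V\in\mathcal F_n\}\in\Omega_X$, pick any $F\in[X]^{<\aleph_0}$ and fix a surjection $f:\omega\to F$. Then $f\in\mathbb F(X,\mathrm{ord})$, so there exist $n\in\omega$ and $V\in\mathcal F_n$ with $f\in V$; unpacking the definition of $V$ yields $F=\mathrm{img}(f)\subseteq U_V=\overrightarrow{\mathrm{T}}_{\mathrm{II}}(V,\mathscr U_n)$. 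Theorem \ref{TranslationTheorem} then delivers the claimed inequality for both $\ast\in\{1,\mathrm{fin}\}$. I do not foresee a serious obstacle beyond bookkeeping: the only care point is that the tube $V$ determines $U_V$ uniquely (constant sequences separate distinct $U$'s in $\mathbb F(X,\mathrm{ord})$), so the selection in defining $\overrightarrow{\mathrm{T}}_{\mathrm{II}}$ is harmless.
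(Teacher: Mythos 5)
Your argument is correct, and it reaches the conclusion by a genuinely different (more direct) route than the paper. The paper's proof factors through the Vietoris hyperspace of finite subsets: it observes that \(f \mapsto \mathrm{img}(f)\), \(\mathbb F(X,\mathrm{ord}) \to \mathcal P_{\mathrm{fin}}(X)\), is a continuous surjection (Proposition \ref{prop:ImageMappingQuotient}), applies the general continuous-image result (Proposition \ref{prop:ContinuousImage}) to get \(\mathsf G_\ast(\mathcal O_{\mathbb F(X,\mathrm{ord})},\mathcal O_{\mathbb F(X,\mathrm{ord})}) \leq^+_{\mathrm{II}} \mathsf G_\ast(\mathcal O_{\mathcal P_{\mathrm{fin}}(X)},\mathcal O_{\mathcal P_{\mathrm{fin}}(X)})\), and then invokes the external equivalence of Theorem \ref{thm:FiniteMotivation} to replace the hyperspace game by \(\mathsf G_\ast(\Omega_X,\Omega_X)\). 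Your single application of Theorem \ref{TranslationTheorem} with the tube assignment \(U \mapsto U^\omega \cap \mathbb F(X,\mathrm{ord})\) is in effect the composite of those two translations carried out by hand: \ref{translationPropI} holds by construction, and your verification of \ref{translationPropII} via a surjection \(\omega \to F\) is exactly the point where the \(\omega\)-cover property is both used and recovered. What your route buys is self-containment (no reliance on the cited Theorem \ref{thm:FiniteMotivation} or on Propositions \ref{prop:ContinuousImage} and \ref{prop:ImageMappingQuotient}) and it delivers the \(\leq^+_{\mathrm{II}}\) form directly from Theorem \ref{TranslationTheorem}, whereas Proposition \ref{prop:ContinuousImage} is only stated with \(\leq_{\mathrm{II}}\) even though its proof yields the stronger relation; what the paper's route buys is reuse of general-purpose lemmas that are needed elsewhere anyway. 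Two tiny remarks: the uniqueness of \(U_V\) that you note is true (constant functions separate tubes) but not actually needed, since any choice of \(U_V\) with \(V = U_V^\omega \cap \mathbb F(X,\mathrm{ord})\) satisfies both \ref{translationPropI} and \ref{translationPropII}; and the empty finite set is trivially contained in any member of the output family, so the surjection argument only needs to handle nonempty \(F\).
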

\begin{proof}
    Note that \(f \mapsto \mathrm{img}(f)\), \(\mathbb F(X,\mathrm{ord}) \to \mathcal P_{\mathrm{fin}}(X)\),
    is a continuous surjection by Proposition \ref{prop:ImageMappingQuotient}.
    Hence, by Proposition \ref{prop:ContinuousImage},
    \[\mathsf G_\ast(\mathcal O_{\mathbb F(X,\mathrm{ord})},\mathcal O_{\mathbb F(X,\mathrm{ord})})
    \leq_{\mathrm{II}}^+
    \mathsf G_\ast(\mathcal O_{\mathcal P_{\mathrm{fin}}(X)},\mathcal O_{\mathcal P_{\mathrm{fin}}(X)}).\]
    The desired conclusion obtains by Theorem \ref{thm:FiniteMotivation}.
\end{proof}
Note that \(\mathbb F(\omega, \mathrm{ord}) = \mathbb K(\omega, \mathrm{ord})\) is Menger since it is \(\sigma\)-compact
by Theorem \ref{thm:KOmegaOrd}.
Hence, there are nontrivial examples of spaces \(\mathbb F(X, \mathrm{ord})\) which are Menger.
Things are much less interesting in the single-selection context, however.
\begin{proposition} \label{prop:TrivialRothberger}
    For any \(T_1\) space \(X\), \(\mathbb F(X, \mathrm{ord})\) is Rothberger if and only if \(X\)
    is a singleton.
\end{proposition}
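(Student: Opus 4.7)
The forward direction is trivial: if $X = \{x_0\}$, then $\mathbb F(X,\mathrm{ord})$ consists of the single constant function, so any open cover has a one-element subcover.

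For the converse, the plan is to exhibit $\mathsf V(2^\omega)$ as a closed subspace of $\mathbb F(X, \mathrm{ord})$, and then appeal to Example \ref{ex:CantorNonRothberger} together with the standard fact that the Rothberger property is inherited by closed subspaces. Assume $X$ has two distinct points $p$ and $q$, and set $F = \{p,q\}$. Since $X$ is $T_1$, the singletons $\{p\}$ and $\{q\}$ are closed in $X$; hence each of $\{p\}$ and $\{q\}$ is clopen in $F$, making the subspace topology on $F$ discrete. Moreover, $F$ is closed in $X$, so by Proposition \ref{prop:BasicProductOfClosed} the tube $[F; \emptyset, \emptyset] = F^\omega$ is closed in $\mathsf V(X^\omega)$, and hence closed in $\mathbb F(X, \mathrm{ord})$. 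Since every element of $F^\omega$ has image contained in the finite set $F$, we also have $F^\omega \subseteq \mathbb F(X,\mathrm{ord})$.

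Next I would check that the subspace topology on $F^\omega$ inherited from $\mathsf V(X^\omega)$ coincides with the Vietoris power topology on $F^\omega$ where $F$ carries its (discrete) subspace topology from $X$. Intersecting a basic open $[U;\Lambda,V]$ of $\mathsf V(X^\omega)$ with $F^\omega$ yields exactly $[U \cap F;\Lambda, V \cap F]$ in $\mathsf V(F^\omega)$, since the requirement $\mathrm{img}(f) \subseteq U$ for $f \in F^\omega$ is equivalent to $\mathrm{img}(f) \subseteq U \cap F$, and similarly for the finitely many coordinate constraints. Thus $F^\omega$, as a subspace of $\mathbb F(X, \mathrm{ord})$, is homeomorphic to $\mathsf V(2^\omega)$.

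To conclude, I would cite Example \ref{ex:CantorNonRothberger}, which says that $\mathsf V(2^\omega)$ is not Rothberger, and invoke closed-hereditariness of Rothberger: given a sequence of open covers of a closed set $C$, extend each cover to an open cover of the ambient space by adding open sets whose union with $C$ gives the originals, together with the complement of $C$; applying Rothberger to the ambient space and restricting the chosen open sets to $C$ yields a Rothberger selection for $C$. Applied to our closed copy of $\mathsf V(2^\omega)$ inside $\mathbb F(X, \mathrm{ord})$, this forces $\mathbb F(X, \mathrm{ord})$ to fail Rothberger as well. The only step requiring any care is the identification of the subspace topology on $F^\omega$ with $\mathsf V(2^\omega)$, and even that reduces to a short basis-comparison argument using the $T_1$ hypothesis; there is no substantial obstacle.
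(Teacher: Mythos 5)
Your proposal is correct and follows essentially the same route as the paper: both prove the converse by using the $T_1$ hypothesis to realize $\{p,q\}^\omega = [\{p,q\};\emptyset,\emptyset]$ as a closed copy of $\mathsf V(2^\omega)$ inside $\mathbb F(X,\mathrm{ord})$ (via Proposition \ref{prop:BasicProductOfClosed}) and then invoke Example \ref{ex:CantorNonRothberger} together with closed-hereditariness of the Rothberger property. You simply spell out the subspace-topology identification and the hereditariness argument in more detail than the paper does.
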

\begin{proof}
    If \(X\) is a singleton, then \(\mathbb F(X, \mathrm{ord})\) is also a singleton and clearly Rothberger.
    So suppose \(X\) is a \(T_1\) space with at least two distinct points \(p,q \in X\).
    Since \(X\) is \(T_1\), \(\{p,q\}\) is closed in \(X\) and is the discrete doubleton as a subspace of \(X\).
    Hence, by Proposition \ref{prop:BasicProductOfClosed}, \(Y := [\{p,q\}; \emptyset, \emptyset]\) is a closed subspace
    of \(\mathbb F(X, \mathrm{ord})\).
    Note that \(Y\) is homeomorphically equivalent to \(\mathsf V(2^\omega)\), which establishes that \(\mathbb F(X, \mathrm{ord})\)
    is not Rothberger, as argued in Example \ref{ex:CantorNonRothberger}.
\end{proof}
Consequently, we see that the single-selection version of Corollary \ref{cor:FiniteInequality} does not, in general, reverse.
\begin{example} \label{example:OmegaRothbergerNontransfer}
    In general, for a space \(X\),
    \[\mathsf G_1(\Omega_X, \Omega_X) \not\leq_{\mathrm{II}} \mathsf G_1(\mathcal O_{\mathbb F(X, \mathrm{ord})},\mathcal O_{\mathbb F(X, \mathrm{ord})}).\]
    Indeed, any \(T_1\) space \(X\) with at least two points which is \(\omega\)-Rothberger, like the discrete doubleton,
    has the property that \(\mathbb F(X, \mathrm{ord})\) is not Rothberger by Proposition \ref{prop:TrivialRothberger}.
\end{example}
Note however that spaces which are \(\sigma\)-compact are \(\omega\)-Menger (see \cite[Corollary 4.18]{TraditionalMenger}),
so the spaces in this paper explicitly shown to witness the condition of Example \ref{example:OmegaRothbergerNontransfer} do not extend
to the finite-selection context.
In fact, we have yet to identify any examples of a space extending Example \ref{example:OmegaRothbergerNontransfer} to finite-selections.
\begin{question} \label{question:MengerThing}
    Is it possible that, for spaces \(X\),
    \[\mathsf G_{\mathrm{fin}}(\Omega_X, \Omega_X)
    \leftrightarrows \mathsf G_{\mathrm{fin}}(\mathcal O_{\mathbb F(X, \mathrm{ord})},\mathcal O_{\mathbb F(X, \mathrm{ord})})?\]
\end{question}
We have not yet even been able to determine whether \(\mathbb F(\mathbb R, \mathrm{ord})\) is Menger;
if \(\mathbb F(\mathbb R, \mathrm{ord})\) fails to be Menger, then Question \ref{question:MengerThing} would be
answered in the negative.

In general, the mapping \(f \mapsto \mathrm{img}(f)\), \(\mathbb K(X, \mathrm{ord}) \to \mathbb K(X)\),
is not proper.
Indeed, consider \(\mathbb K(\omega,\mathrm{ord})\) and \(F := \{ K \in \mathbb K(\omega) : K \subseteq \{0,1\}\}\),
a compact subset of \(\mathbb K(\omega)\).
Note that \(\mathrm{img}^{-1}(F)\) is homeomorphically equivalent to \(\mathsf V(2^\omega)\),
and \(\mathsf V(2^\omega)\) is not compact as shown in Example \ref{example:Cantor}.

Nevertheless,
\begin{lemma} \label{lem:CompactCovering}
    The mapping \(f \mapsto \mathrm{img}(f)\), \(\mathbb K(X, \mathrm{ord}) \to \mathbb K(X)\),
    is a compact covering map.
\end{lemma}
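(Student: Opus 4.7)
Plan: Given a compact $L \subseteq \mathbb{K}(X)$, I aim to produce a compact $K^\ast \subseteq \mathbb{K}(X,\mathrm{ord})$ with $\mathrm{img}[K^\ast] = L$. The first move is to exploit the classical fact (standard for the Vietoris hyperspace, see \cite{MichaelSubsets}) that $Y := \bigcup L$ is a compact subset of $X$. Thus every $K \in L$ satisfies $\#K \leq \#Y \leq \kappa$, and every candidate enumeration lies in $Y^\kappa$. Note that $Y^\kappa$ is compact in the Tychonoff product topology, so any subset of $Y^\kappa$ that turns out to be \emph{closed} in the Vietoris power topology on $\mathsf{V}(X^\kappa)$ (and lies in $Y^\kappa$) will also be Tychonoff-closed and hence Tychonoff-compact; the whole game is to push Tychonoff-compactness up to Vietoris-power-compactness on a carefully chosen subset.

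The strategy is to assign to each $K \in L$ a specific enumeration $f_K \in Y^\kappa$ with $\mathrm{img}(f_K) = K$, and then take $K^\ast := \{f_K : K \in L\}$. To make $K^\ast$ compact in the Vietoris power topology, I would exploit the extra room in the index set $\kappa$ by partitioning it so that each potential value $y \in Y$ is assigned a cofinal block of coordinates $B_y \subseteq \kappa$. Then I would define $f_K$ to take the value $y$ on (every coordinate of) $B_y$ whenever $y \in K$, and to fall back to a designated value (say, the value of $f_K$ on some fixed coordinate of $B_{y^\ast}$ for a selected $y^\ast \in K$) on the remaining coordinates. This guarantees that whenever $y \in K$, the point $y$ is hit by $f_K$ on a cofinal set of coordinates, which is exactly the condition Vietoris-power convergence needs to preserve surjectivity of the limit onto the limit of the images.

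The verification of compactness is then a standard net argument: given any net $(f_{K_i})_{i \in I}$ in $K^\ast$, pass first to a subnet with $K_i \to K$ in $L$ (compactness of $L$), then to a further subnet with $f_{K_i} \to f$ in $Y^\kappa$ (Tychonoff compactness), and verify that $f = f_K$ so that the convergence takes place in the Vietoris power topology (using continuity of $\mathrm{img}$ from Proposition \ref{prop:ImageMappingQuotient} to control the image). The hard part, and the point on which the whole construction hinges, is the second half of that verification: forcing $\mathrm{img}(f) = K$, not just $\mathrm{img}(f) \subseteq K$. The preimage $\mathrm{img}^{-1}(L)$ itself is notoriously non-compact (as the example preceding the lemma shows: for $L = \{K \in \mathbb{K}(\omega) : K \subseteq \{0,1\}\}$ its preimage is homeomorphic to the non-compact $\mathsf{V}(2^\omega)$), so the partition-and-fallback scheme must be rigid enough to exclude those "vanishing" enumerations in which a point of $K$ gets hit by $f_{K_i}$ only on a bounded set of coordinates and is then lost in the pointwise limit. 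Ensuring that rigidity while still keeping the assignment $K \mapsto f_K$ compatible with every Vietoris limit simultaneously is the main technical obstacle, and is what the cofinal-block construction is designed to overcome.
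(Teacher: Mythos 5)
You have correctly located where the difficulty lives: after extracting a subnet with \(K_i \to K\) in \(L\) and \(f_{K_i} \to f\) pointwise, the tube condition (\(\mathrm{img}(f_{K_i}) = K_i \subseteq U\) eventually, for every open \(U \supseteq K\)) comes for free from Vietoris convergence of \(K_i\) to \(K\), so everything hinges on showing \(f = f_K\). But the proposal stops exactly there: the cofinal-block-with-fallback scheme is named as the device that is supposed to force \(f = f_K\), and its key property is asserted rather than proved. As described, the scheme fails. The problem is that \(y \in K = \lim_i K_i\) in the Vietoris topology only guarantees that \(K_i\) meets each neighborhood of \(y\) eventually, \emph{not} that \(y \in K_i\) eventually; so on the block \(B_y\) the net \(f_{K_i}\) may take the fallback value for every \(i\), and a single fallback element of \(K_i\) cannot track all such \(y\) simultaneously. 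Concretely, take \(X = \mathbb R\), \(K_n = \{1/n, 1\}\), \(K = \{0,1\}\), and \(L = \{K\} \cup \{K_n : n \geq 2\}\), which is compact in \(\mathbb K(\mathbb R)\). On \(B_0\) we have \(0 \notin K_n\) for every \(n\), so \(f_{K_n}\) takes the fallback value \(y^\ast_n \in K_n\) there; if \(y^\ast_n = 1\), then \(f_{K_n} \equiv 1\) on \(B_0\) while \(f_K \equiv 0\) there, and since (by continuity of \(\mathrm{img}\)) the only candidate limit of a subnet of \(\langle f_{K_n} \rangle\) inside \(K^\ast\) is \(f_K\), the set \(K^\ast\) is not compact. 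Choosing \(y^\ast_n = 1/n\) repairs this example but breaks the symmetric one \(K_n = \{0, 1 - 1/n\}\); no fixed selection of a fallback element works uniformly. What is actually needed is an assignment in which the value of \(f_K\) on \(B_y\) depends jointly and continuously on \(y\) and \(K\) --- that is, a continuous section of \(\mathrm{img}\) over \(L\) --- and producing one is the entire content of the lemma, not a routine verification.

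For contrast, the paper does not construct the enumerations by hand: it observes that \(\mathbb K(X,\mathrm{ord}) = \mathbb K(X,\mathrm{ord},\kappa)\) with \(\kappa\) infinite, so Proposition \ref{prop:ImageMappingQuotient} makes \(\mathrm{img}\) a continuous \emph{open} surjection onto \(\mathbb K(X)\), and then applies Proposition \ref{prop:OpenSurjection}, which supplies a continuous section \(g\) with \(g(K) \in \mathrm{img}^{-1}(K)\) and takes \(g[L]\) as the compact preimage. The openness of the image map, which your argument never uses, is precisely the ingredient that replaces your ad hoc selection. A smaller point: your claim that a subset of \(Y^\kappa\) closed in \(\mathsf V(X^\kappa)\) is Tychonoff-closed has the comparison backwards --- the Vietoris power is finer than the Tychonoff product (Proposition \ref{prop:BasicComparison}), so closedness passes from Tychonoff to Vietoris, not conversely. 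That slip is not load-bearing for the net argument, but it reflects the real direction of the difficulty: Tychonoff compactness of \(Y^\kappa\) is cheap, and it is the finer Vietoris power topology that destroys compactness unless the selection \(K \mapsto f_K\) is genuinely continuous.
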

\begin{proof}
    Since \(\mathbb K(X, \mathrm{ord}) = \mathbb K(X, \mathrm{ord}, \kappa)\), where
    \[\kappa = \sup \{ \#K : K \in K(X) \} + \omega \geq \omega,\]
    Proposition \ref{prop:ImageMappingQuotient} asserts that \(f \mapsto \mathrm{img}(f)\),
    \(\mathbb K(X, \mathrm{ord}) \to \mathbb K(X)\), is a continuous open surjection.
    Applying Proposition \ref{prop:OpenSurjection} finishes the proof.
\end{proof}
\begin{corollary} \label{cor:CompactInequality}
    For any space \(X\) and \(\ast \in \{ 1, \mathrm{fin} \}\),
    \[\mathsf G_\ast(\mathcal K_{\mathbb K(X,\mathrm{ord})},\mathcal K_{\mathbb K(X,\mathrm{ord})})
    \leq_{\mathrm{II}}^+ \mathsf G_\ast(\mathcal K_X, \mathcal K_X)\]
    and
    \[\mathsf G_\ast(\mathcal O_{\mathbb K(X,\mathrm{ord})},\mathcal O_{\mathbb K(X,\mathrm{ord})})
    \leq_{\mathrm{II}}^+ \mathsf G_\ast(\mathcal K_X, \mathcal K_X).\]
\end{corollary}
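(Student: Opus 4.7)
The plan is to apply Theorem \ref{TranslationTheorem} directly for both inequalities, with a uniform choice of translation functions. Set $\mathcal B = \mathcal D = \mathcal K_X$. For each $\mathscr W \in \mathcal K_X$, define
\[\overleftarrow{\mathrm{T}}_{\mathrm{I}}(\mathscr W) = \left\{ [W;\emptyset,\emptyset] \cap \mathbb K(X,\mathrm{ord}) : W \in \mathscr W \right\}\]
and, for $V = [W;\emptyset,\emptyset] \cap \mathbb K(X,\mathrm{ord})$ in this family, let $\overrightarrow{\mathrm{T}}_{\mathrm{II}}(V,\mathscr W) = W$ (well-defined since $W$ is recovered from $V$ by restricting to the constant functions).

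First I verify that $\overleftarrow{\mathrm{T}}_{\mathrm{I}}(\mathscr W)$ is a $k$-cover of $\mathbb K(X,\mathrm{ord})$ (and hence an open cover), so that $\overleftarrow{\mathrm{T}}_{\mathrm{I}}$ has the correct codomain for each choice $\mathcal A = \mathcal K_{\mathbb K(X,\mathrm{ord})}$ and $\mathcal A = \mathcal O_{\mathbb K(X,\mathrm{ord})}$. Given a compact $\mathscr F \subseteq \mathbb K(X,\mathrm{ord})$, Proposition \ref{prop:ImageMappingQuotient} yields that $\{\mathrm{img}(f) : f \in \mathscr F\}$ is a compact subset of $\mathbb K(X)$, and hence $\bigcup\{\mathrm{img}(f) : f \in \mathscr F\}$ is compact in $X$ by the standard hyperspace fact from \cite{MichaelSubsets}. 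Since $\mathscr W$ is a $k$-cover, some $W \in \mathscr W$ contains this union, so $\mathscr F \subseteq [W;\emptyset,\emptyset] \cap \mathbb K(X,\mathrm{ord})$. Condition \ref{translationPropI} is immediate from the definitions.

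For \ref{translationPropII}, suppose $\mathcal F_n \subseteq \overleftarrow{\mathrm{T}}_{\mathrm{I}}(\mathscr W_n)$ is finite with corresponding finite $\mathcal G_n \subseteq \mathscr W_n$, and let $K \subseteq X$ be compact. In the open-cover case (second inequality), assuming $\bigcup_n \mathcal F_n$ is merely an open cover of $\mathbb K(X,\mathrm{ord})$, I pick any $f \in \mathbb K(X,\mathrm{ord})$ with $\mathrm{img}(f) = K$, which is possible since the defining cardinal for $\mathbb K(X,\mathrm{ord})$ is at least $\#K$; then $f$ lies in some $[W;\emptyset,\emptyset]$ in the cover, forcing $K \subseteq W \in \bigcup_n \mathcal G_n$. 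In the $k$-cover case (first inequality), I apply Lemma \ref{lem:CompactCovering} to the compact singleton $\{K\} \subseteq \mathbb K(X)$ to obtain a compact $\mathscr C \subseteq \mathbb K(X,\mathrm{ord})$ whose image under $\mathrm{img}$ is exactly $\{K\}$, so that $\mathrm{img}(f) = K$ for every $f \in \mathscr C$. Since $\bigcup_n \mathcal F_n$ is a $k$-cover of $\mathbb K(X,\mathrm{ord})$, some $[W;\emptyset,\emptyset]$ contains $\mathscr C$, and choosing any $f \in \mathscr C$ gives $K = \mathrm{img}(f) \subseteq W$.

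The main obstacle is the $k$-cover case of \ref{translationPropII}: converting a covering statement about a compact family in $\mathbb K(X,\mathrm{ord})$ into one about $K \subseteq X$. Lemma \ref{lem:CompactCovering} is exactly what makes this lift available, and it is the sole new ingredient beyond the pattern used in Proposition \ref{prop:ProperImage}. Once both conditions are verified, Theorem \ref{TranslationTheorem} delivers both claimed inequalities at once.
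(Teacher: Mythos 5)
Your argument is correct, but it takes a genuinely different route from the paper's. The paper factors everything through the hyperspace \(\mathbb K(X)\): it applies Propositions \ref{prop:ProperImage} and \ref{prop:ContinuousImage} to the compact covering map \(f \mapsto \mathrm{img}(f)\) of Lemma \ref{lem:CompactCovering} to pass from covers of \(\mathbb K(X,\mathrm{ord})\) to covers of \(\mathbb K(X)\), and then closes the gap to \(\mathcal K_X\) by citing \cite[Corollary 4.14]{CHVietoris}. You instead apply Theorem \ref{TranslationTheorem} once, directly, translating a \(k\)-cover \(\mathscr W\) of \(X\) into the cover of \(\mathbb K(X,\mathrm{ord})\) by tubes \([W;\emptyset,\emptyset]\); this bypasses \(\mathbb K(X)\) and the external citation entirely, at the cost of redoing by hand some cover bookkeeping that the cited results package up. Your verifications are sound: the tubes form a \(k\)-cover by Proposition \ref{prop:ImageMappingQuotient} together with Michael's union theorem, the map \(V \mapsto W\) is well-defined via the constant functions, and both instances of \ref{translationPropII} check out. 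A side benefit of your route is that it yields the \(\leq^+_{\mathrm{II}}\) conclusion immediately from Theorem \ref{TranslationTheorem}, whereas Propositions \ref{prop:ProperImage} and \ref{prop:ContinuousImage} are only stated with \(\leq_{\mathrm{II}}\). One simplification, though: what you call the ``main obstacle'' is not an obstacle, and Lemma \ref{lem:CompactCovering} is not needed in your argument. A \(k\)-cover of \(\mathbb K(X,\mathrm{ord})\) is by definition an open cover, and the singleton \(\{f\}\) with \(\mathrm{img}(f) = K\) is already a compact subset of \(\mathbb K(X,\mathrm{ord})\); either way, some tube \([W;\emptyset,\emptyset]\) contains \(f\), forcing \(K \subseteq W\). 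So the argument you give for the open-cover case of \ref{translationPropII} disposes of both inequalities at once.
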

\begin{proof}
    By Lemma \ref{lem:CompactCovering}, \(f \mapsto \mathrm{img}(f)\), \(\mathbb K(X, \mathrm{ord}) \to \mathbb K(X)\),
    is a compact covering map.
    Consequently,
    \[\mathsf G_\ast(\mathcal K_{\mathbb K(X,\mathrm{ord})},\mathcal K_{\mathbb K(X,\mathrm{ord})})
    \leq_{\mathrm{II}}^+ \mathsf G_\ast(\mathcal K_{\mathbb K(X)}, \mathcal K_{\mathbb K(X)})\]
    and
    \[\mathsf G_\ast(\mathcal O_{\mathbb K(X,\mathrm{ord})},\mathcal O_{\mathbb K(X,\mathrm{ord})})
    \leq_{\mathrm{II}}^+ \mathsf G_\ast(\mathcal O_{\mathbb K(X)}, \mathcal O_{\mathbb K(X)}),\]
    by Propositions \ref{prop:ProperImage} and \ref{prop:ContinuousImage}, respectively.
    Since, by \cite[Corollary 4.14]{CHVietoris},
    \[\mathsf G_\ast(\mathcal O_{\mathbb K(X)}, \mathcal O_{\mathbb K(X)})
    \leq_{\mathrm{II}}^+ \mathsf G_\ast(\mathcal K_X, \mathcal K_X)
    \leftrightarrows \mathsf G_\ast(\mathcal K_{\mathbb K(X)}, \mathcal K_{\mathbb K(X)}),\]
    the asserted statements obtain.
\end{proof}
None of the inequalities appearing in Corollary \ref{cor:CompactInequality}, in general, reverse.
\begin{example} \label{example:ComactNontransfer}
    In general, for a space \(X\),
    \[\mathsf G_1(\mathcal K_X, \mathcal K_X) \not\leq_{\mathrm{II}} \mathsf G_1(\mathcal O_{\mathbb K(X, \mathrm{ord})},\mathcal O_{\mathbb K(X, \mathrm{ord})}),\]
    \[\mathsf G_1(\mathcal K_X, \mathcal K_X) \not\leq_{\mathrm{II}} \mathsf G_1(\mathcal K_{\mathbb K(X, \mathrm{ord})},\mathcal K_{\mathbb K(X, \mathrm{ord})}),\]
    \[\mathsf G_{\mathrm{fin}}(\mathcal K_X, \mathcal K_X) \not\leq_{\mathrm{II}} \mathsf G_{\mathrm{fin}}(\mathcal O_{\mathbb K(X, \mathrm{ord})},\mathcal O_{\mathbb K(X, \mathrm{ord})}),\]
    and
    \[\mathsf G_{\mathrm{fin}}(\mathcal K_X, \mathcal K_X) \not\leq_{\mathrm{II}} \mathsf G_{\mathrm{fin}}(\mathcal K_{\mathbb K(X, \mathrm{ord})},\mathcal K_{\mathbb K(X, \mathrm{ord})}).\]
\end{example}
\begin{proof}
    We show that \(\mathbb K(\mathbb R, \mathrm{ord})\) is not Lindel\"{o}f.
    Since \(\mathbb R\) is \(k\)-Rothberger, this suffices for the example (see \cite[Figures 1 and 2]{TraditionalMenger}).
    So consider
    \[\mathscr U := \{ (-1,1)^{\mathfrak c} \} \cup \left\{ \{ f \in \mathbb K(\mathbb R, \mathrm{ord}) : |f(\alpha)| > 1/2 \} : \alpha \in \mathfrak c \right\}.\]
    Note that \(\mathscr U\) is an open cover of \(\mathbb K(\mathbb R, \mathrm{ord})\).
    Then let \(\{ \alpha_n : n \in \omega \} \subseteq \mathfrak c\) be arbitrary and consider
    \[\mathscr V := \{ (-1,1)^{\mathfrak c} \} \cup \left\{ \{ f \in \mathbb K(\mathbb R, \mathrm{ord}) : |f(\alpha_n)| > 1/2 \} : n \in \omega \right\}.\]
    We can then choose \(\beta < \mathfrak c\) such that \(\sup \{ \alpha_n : n \in \omega \} < \beta\) and then
    consider the function \(f : \mathfrak c \to \mathbb R\) defined by
    \[f(\alpha) = \begin{cases} 0, & \alpha < \beta, \\ 2, & \beta \leq \alpha. \end{cases}\]
    Note that \(f \in \mathbb K(\mathbb R, \mathrm{ord}) \setminus\bigcup \mathscr V\), which establishes that \(\mathscr U\) has no countable subcover.
\end{proof}

Recall that an \emph{\(S\)-space} is a hereditarily separable space which is not Lindel\"{o}f.
As we have seen above, \(\mathbb K(\mathbb R, \mathrm{ord})\) is separable but not Lindel\"{o}f.
However, by a well-known argument\footnote{See, for example,\\ \url{https://dantopology.wordpress.com/2009/11/06/product-of-separable-spaces/}},
\(\mathbb K(\mathbb R, \mathrm{ord})\) is not an \(S\)-space.
\begin{proposition}
    The space \(\mathbb K(\mathbb R, \mathrm{ord})\) is not hereditarily separable.
\end{proposition}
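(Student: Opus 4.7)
The plan is to exhibit an uncountable discrete subspace of $\mathbb K(\mathbb R, \mathrm{ord})$, from which hereditary separability immediately fails, since every uncountable discrete space is non-separable. The argument is the standard one showing that the Tychonoff product $\{0,1\}^{\mathfrak c}$ already contains such a subspace; by Proposition \ref{prop:BasicComparison}, the Vietoris power topology refines the Tychonoff product topology, so any set that is discrete in the Tychonoff product remains discrete in the Vietoris power.

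Since $\sup\{\#K : K \in K(\mathbb R)\} = \mathfrak c$, the index cardinal for $\mathbb K(\mathbb R, \mathrm{ord})$ is $\mathfrak c$, and we may fix an embedding $\omega_1 \hookrightarrow \mathfrak c$. For each $\alpha < \omega_1$, define $f_\alpha : \mathfrak c \to \mathbb R$ by $f_\alpha(\alpha) = 1$ and $f_\alpha(\beta) = 0$ for $\beta \neq \alpha$. Then $\mathrm{img}(f_\alpha) = \{0,1\}$ is a finite, hence compact, subset of $\mathbb R$, so $f_\alpha \in \mathbb K(\mathbb R, \mathrm{ord})$. Let $D = \{f_\alpha : \alpha < \omega_1\}$, a subset of cardinality $\omega_1$. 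To see that $D$ is relatively discrete, fix $\alpha < \omega_1$ and define $V : \{\alpha\} \to \mathscr T_{\mathbb R}$ by $V_\alpha = (1/2, 3/2)$; the basic open set $[\mathbb R; \{\alpha\}, V]$ contains $f_\alpha$, since $f_\alpha(\alpha) = 1 \in (1/2, 3/2)$, and excludes every other $f_\beta$, since $f_\beta(\alpha) = 0 \notin (1/2, 3/2)$ when $\beta \neq \alpha$. Hence $[\mathbb R; \{\alpha\}, V] \cap D = \{f_\alpha\}$.

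As $D$ is an uncountable discrete subspace in its inherited topology, it cannot be separable, so $\mathbb K(\mathbb R, \mathrm{ord})$ contains a non-separable subspace and thus fails to be hereditarily separable. There is no genuine obstacle in the argument; it reduces to the classical isolation trick applied at a single coordinate, relying only on the fact that the Vietoris power topology is finer than the Tychonoff product.
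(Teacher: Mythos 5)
Your proof is correct, but it takes a different route from the paper's. The paper exhibits the subspace $X = \{ f \in 2^{\mathfrak c} : \# f^{-1}(1) \leq \aleph_0 \}$ (a $\Sigma$-product-style set sitting inside $\mathbb K(\mathbb R,\mathrm{ord})$) and shows directly that it is not separable: given any countable $\{f_j : j \in \omega\} \subseteq X$, one picks a coordinate $\alpha$ outside all the countable supports, and then $\pi_\alpha^{-1}(\mathbb R\setminus\{0\}) \cap X$ is a nonempty open set missing every $f_j$. You instead produce the uncountable family $\{ \mathbf 1_{\{\alpha\}} : \alpha < \omega_1 \}$ and isolate each member with the single-coordinate open set $\pi_\alpha^{-1}((1/2,3/2))$, obtaining an uncountable relatively discrete subspace; since an uncountable discrete space is not separable, hereditary separability fails. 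Both arguments live inside $2^{\mathfrak c} \cap \mathbb K(\mathbb R,\mathrm{ord})$ and both exploit only Tychonoff-open sets of the form $[\mathbb R;\{\alpha\},V]$, so neither uses anything specific to the Vietoris power beyond its being finer than the product topology. Your version is slightly stronger in its byproduct: it shows $s(\mathbb K(\mathbb R,\mathrm{ord})) \geq \aleph_1$ (and taking all $\alpha < \mathfrak c$ gives spread $\mathfrak c$), which also rules out hereditary Lindel\"{o}fness; it parallels the almost-disjoint-family argument the paper uses for the spread of $\mathsf V(\omega^\omega)$. The paper's witness subspace, by contrast, is not itself discrete, and its non-separability is established by the diagonalization over countable supports rather than by an isolation argument.
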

\begin{proof}
    Consider
    \[X := \{ f \in 2^{\mathfrak c} : \# f^{-1}(1) \leq \aleph_0 \},\]
    a restricted subspace of the \(\Sigma\)-product \(\Sigma(\mathbb R,\mathbf 0,\mathfrak c)\).
    Note that \(X \subseteq \mathbb K(\mathbb R, \mathrm{ord})\).
    We show that \(X\) is not separable.
    Consider any countable subset \(\{ f_j : j \in \omega \} \subseteq X\)
    and choose \(\alpha \in \mathfrak c \setminus \bigcup \{ f^{-1}_j(1) : j \in \omega \}\).
    Then note that
    \[[\mathbb R; \{\alpha\} , \{ \langle \alpha , \mathbb R \setminus \{ 0 \} \rangle \}] \cap \{ f_j : j \in \omega \} = \emptyset,\]
    establishing the existence of a nonempty open subset of \(X\) which is disjoint from \(\{ f_j : j \in \omega \}\).
\end{proof}

\section*{Acknowledgments}
    The authors thank Jocelyn Bell for bringing the authors' attention to \cite{PinchedCube} and
    Bob Hingtgen for discussion relating to Example \ref{example:UniformBox} in an initial draft of the paper.

\end{document}